\newtheorem{proposition}{Proposition}[section]
\newtheorem{lemma}{Lemma}[section]
\newtheorem{theorem}{Theorem}[section]
\newtheorem{corollary}{Corollary}[section]
\theoremstyle{definition}
\newtheorem{remark}{Remark}
\newtheorem{assumption}{Assumption}
\newtheorem{example}{Example}[section]
\newtheorem*{solution*}{Solution}
 \newcommand{\bbar}[1]{\setbox0=\hbox{$#1$}\dimen0=.2\ht0 \kern\dimen0 \overline{\kern-\dimen0 #1}}
 \DeclareMathOperator{\End}{\ensuremath{\mathcal{E}\kern-.125em\mathpzc{nd}}}
 \DeclareMathOperator{\Hom}{\mathcal{H}\kern-.125em\mathpzc{om}}
 \DeclareMathOperator{\id}{id}
 \newcommand{\udot}{\ensuremath{{\lower .183333em \hbox{\LARGE \kern -.05em$\cdot$}}}}
 \DeclareMathOperator{\Hess}	{Hess}
 \newcommand{\cL}{\mathcal{L}}
 \newcommand{\cU}{\mathcal{U}}
 \renewcommand{\P}{\mathbb{P}}
 \newcommand{\R}{\mathbb{R}}
  \newcommand{\Z}{\mathbb{Z}}
 \newcommand{\p}{\partial}
 \DeclareMathOperator{\gSO}{SO}
 \DeclareMathOperator{\gSp}{Sp}
 \DeclareMathOperator{\aSp}{sp}
 \DeclareMathOperator{\gU}{U}
 \DeclareMathOperator{\gGL}{GL}
 \DeclareMathOperator{\gO}{O}
 \DeclareMathOperator{\spn}{span}
 \DeclareMathOperator{\Mi}{Mi}
 \DeclareMathOperator{\sign}{sign}
 \DeclareMathOperator{\Mat}{Mat} 
\title{Jacobi Fields in Optimal Control: One-dimensional variations}
\author{A. Agrachev, I. Beschastnyi}
\begin{document}

\maketitle

\begin{abstract}
In this paper which is closely related to the previous paper [link] we specify general theory developed there. We study the structure of Jacobi fields in the case of an analytic system and piece-wise analytic control. Moreover, we consider only 1-dimensional control variations. Jacobi fields are piece-wise analytic in this case but may have jump discontinuities. We derive ODEs that these fields satisfy on the intervals of regularity and study behavior of the fields in a neighborhood of a singularity where the ODE becomes singular and the Jacobi fields may have jumps.
\end{abstract}

\section*{Introduction}

In this paper we continue the study of the second variation of optimal control problems using the technique of $\cL$-derivatives that were introduced in~\cite{agr_lderiv_1,agr_lderiv}. One can think of them as a rule that for a given critical point assigns to a certain space of variations a Lagrangian space in some symplectic space. In [link] we presented the theoretical basis and gave an algorithm how to compute an approximation of an $\cL$-derivative with arbitrary good precision. 

In this article we focus on examples and simpler characterization of Jacobi curves. To be more precise we study the following optimal control problem
\begin{equation}
\label{eq:control}
\dot{q} = f(q,u), \qquad u \in U \subset \R, \qquad q\in M
\end{equation}
$$
q(0) = q_0, \qquad q(T) = q_T,
$$
$$
J_T[u] = \int_0^T L(u,q)dt \to \min.
$$
Here $M$ is a $n$-dimensional manifold. For simplicity we assume that $f(u,q)$ and $L(u,q)$ are analytic in both variables, that $U$ is a polytope or a one-dimensional smooth manifold, that controls are $L^\infty$ functions and that the time $T$ is fixed. For simplicity and conceptual clarity we make the assumption that extremal control takes values in the vertices or one-dimensional edges of $U$.

We begin by recalling the necessary results from symplectic geometry in Section~\ref{sec:sympl}. This part is standard and can be found in several books like~\cite{gosson} or~\cite{sternberg}. We present this section for reader's convenience and in order to fix the basic notations. Then we recall all the necessary results from our article~\cite{A_and_me} about the $\cL$-derivatives and the construction of  Jacobi curves for optimal control problems in Sections~\ref{sec:l-der} and~\ref{sec:statement}.

Using these techniques we then proceed to the study of singular and bang-bang cases in Sections~\ref{sec:jacobide} and~\ref{sec:bang}, where we give efficient algorithms for characterizing Jacobi curves. These cases were already studied by many authors (see for example~\cite{bonnard_book,sussmann,SussmannLiu,osmolovskii,goc,agrachev_bang}). These results can be recovered using the constructions from this paper and Morse-type theorems from our previous article~\cite{A_and_me}.

In the second part we study extremals along which at a single point the Legendre condition becomes degenerate. In this case the Jacobi curve should be a solution of a singular Jacobi DE, but it can not be characterized as the usual boundary value problem, because we lose both existence and uniqueness of solutions. This kind of systems were previously studied in the classical calculus of variations, in particular by Morse himself and some of his students~\cite{morse2,morse3} using functional-analytic techniques or in~\cite{vagner} using differential geometry and fields of extremals. Some special examples with similar singularities were recently studied in~\cite{caillau}. We show how using the technique of $\cL$-derivatives we can still characterize the desired Jacobi curve in a simplest singular example as boundary value problem of an ODE, but with conditions on the first k-jet of a solution (and not just the initial value). 

In Section~\ref{sec:normal_form} we show that if we restrict ourselves only to one-dimensional variations, using a change of variables under some non-degeneracy conditions we can separate the dynamics of the system into a regular and singular part in an invariant symplectic subspace of a dimension at most four. In Section~\ref{sec:easy_case} we explain the idea heuristically when the dimension of the manifold on which the control system is defined is equal to one. In the remaining sections we make all those ideas rigorous. In Section~\ref{sec:kneser}, we find sufficient conditions for existence and non-existence of $\cL$-derivatives and in Section~\ref{sec:jumpg2} we characterize Jacobi curves as singular BVPs.

\section{Linear symplectic geometry and Lagrangian Grassmanian}
\label{sec:sympl}
In this section we recall basic facts from symplectic geometry and fix notations that we use in the article.

Given a symplectic space $(\R^{2n},\sigma)$, where $\sigma$ is a symplectic form we can always assume by the Darboux theorem that
$$
\sigma(\lambda_1,\lambda_2) = \lambda_1^T J \lambda_2, \qquad \forall \lambda_i \in \R^{2n},
$$  
where $J$ is the standard complex structure
$$
J = \begin{pmatrix}
0 & \id_n\\
-\id_n & 0
\end{pmatrix}.
$$
In particular $J^2 = -\id_{2n}$. Coordinates in which $\sigma$ has such a form are called \textit{Darboux coordinates}. We use usual position-momenta notations in this case, i.e. we write $\lambda=(p,q)=(p_1,...,p_n,q_1,...,q_n)$.

In Darboux coordinates a Hamiltonian system with a perhaps time-dependent Hamiltonian $H: \R^{2n}\times \R \to \R$ is a system of ODEs
$$
\dot{\lambda} = - J\nabla H(t,\lambda),
$$
where $\nabla H$ is the $\R^{2n}$-gradient of $H$. In particular, if $H$ is quadratic of the form
$$
H(t,\lambda) = \frac{p^T C(t) p - 2q^T A(t) p - q^T B(t) q}{2},
$$
where $B(t), C(t)$ are symmetric matrices, we obtain a linear Hamiltonian system
\begin{equation}
\label{eqintro_ham}
\frac{d}{dt}\begin{pmatrix}
p\\
q
\end{pmatrix} 
=
\begin{pmatrix}
A(t) & B(t)\\
C(t) & - A^T(t)
\end{pmatrix}
\begin{pmatrix}
p\\
q
\end{pmatrix}.
\end{equation}

Given $J$ we can define the \textit{symplectic group} $\gSp(2n)$ and the corresponding \textit{symplectic algebra} $\aSp(2n)$ as
$$
\gSp(2n) = \left\{M\in \Mat(2n\times 2n,\R)\,:\, M^TJ M = J\right\},
$$
$$
\aSp(2n) = \left\{X\in \Mat(2n\times 2n,\R)\,:\, X^TJ  + JX = 0\right\}.
$$
If we write down $X\in \aSp(2n)$ as block matrix, we will see that it has the same form as the matrix in the Hamiltonian system \eqref{eqintro_ham}. Therefore we immediately can see that the flow $\Phi(t)$ of \eqref{eqintro_ham} is symplectic.

We define the \textit{skew-orthogonal complement} of a linear subspace $\Gamma$ as 
$$
\Gamma^\angle = \{\mu \in \R^{2n} \, : \,\sigma(\mu,\lambda) = 0, \, \forall \lambda \in \Gamma \}.
$$

A subspace $\Gamma$ is called
\begin{itemize}
\item \textit{isotropic} if $\Gamma \subset \Gamma^\angle $ or equivalently if $\sigma|_{\Gamma} = 0$,
\item \textit{Lagrangian} if $\Gamma = \Gamma^\angle $ or if equivalently $\sigma|_{\Gamma} = 0$ and $\dim \Gamma = n$,
\item \textit{coisotropic} if $\Gamma \supset \Gamma^\angle $,
\item \textit{symplectic} if $\dim (\Gamma \cap \Gamma^\angle) = 0$ or if equivalently $\sigma|_\Gamma$ is non-degenerate.
\end{itemize}

Since $\sigma$ is skew-symmetric, any one-dimensional direction $\R v$, $v\in\R^{2n}$ is isotropic.  Two main examples of Lagrangian subspaces are the \textit{horizontal subspace} $\Sigma$ and the \textit{vertical subspace} $\Pi$ defined as
\begin{align*}
\Pi &= \left\{(p,q)\in \R^{2n} \, : \, q=0  \right\},\\
\Sigma &= \left\{(p,q)\in \R^{2n} \, : \, p=0  \right\}.
\end{align*}
We can construct other examples as follows. Let $S = S^T$ be a symmetric matrix. Then 
$$
\Lambda_S = \{(p,Sp)\in \R^{2n}\,:\, p\in \R^n\}
$$
is a Lagrangian subspace transversal to $\Sigma$. Conversely to any and Lagrangian subspace $\Lambda$ transversal to $\Sigma$ (we denote this by $\Lambda \pitchfork \Sigma$) we can associate a symmetric operator $S$ from $\Pi$ to $\Sigma$.

We call the set of all Lagrangian planes \textit{Lagrangian Grassmanian} and denote it by $L(n)$. It is a manifold, whose atlas is given by $\Lambda^\pitchfork$, which are the sets of Lagrangian planes transversal to $\Lambda \in L(n)$. Coordinate charts are maps from $\Lambda^\pitchfork$ to the space of symmetric matrices constructed like above. Throughout this paper we use another representation of a Lagrangian plane $\Lambda \in L(n)$ as a span of $n$ independent vectors $v_i$. It is clear that such a representation is not unique. We can replace $v_i$ by any linear span of the same vectors as long as they remain independent. This means that in general we need to quotient a natural $\gGL(n)$ action. We can arrange $v_i$ in a single $n\times 2n$ matrix and we  write
$$
\Lambda = \begin{bmatrix}
v_1 & ... & v_n
\end{bmatrix},
$$
where the square brackets indicate the equivalence class under the $\gGL(n)$ action. We denote this action by
$$
g \begin{bmatrix}
v_1 & ... & v_n
\end{bmatrix} := 
\begin{bmatrix}
 gv_1 & ... & gv_n
\end{bmatrix}, \qquad g\in \gGL(n). 
$$

For example, if $\Lambda\in \Sigma^\pitchfork$ we can write
$$
\Lambda = \begin{bmatrix}
\id_{n}\\
S
\end{bmatrix},
$$
where $S$ is a symmetric matrix like in the example above. Or we can assume that $v_i$ form an orthonormal basis of $\Lambda$ in $\R^{2n}$. Then
$$
\Lambda = \begin{bmatrix}
X\\
Y
\end{bmatrix},
$$
where $X^TX + Y^T Y = \id_{n}$ (orthonormality property) and $X^TY - Y^T X = 0$ (Lagrangian property) are satisfied. A matrix $X+iY$ that satisfies these properties is unitary and the converse is true as well. We can choose $v_i$ in such a way up to a $\gO(n)$-action, which is given by
$$
\begin{pmatrix}
O & 0 \\
0 & O
\end{pmatrix}
\begin{bmatrix}
X\\
Y
\end{bmatrix}, \qquad O \in \gO(n).
$$
This gives the usual identification of $L(n) \simeq \gU(n)/\gO(n)$.

We will use this idea many times when we will consider the singular case, so at this point it makes sense to consider a simple example that will be useful for us later. 

\begin{example}
\label{ex:1}
Suppose that we would like to find a simple representation of a Lagrangian plane $\Lambda\in L(2)$ knowing that $\dim(\Lambda\cap\Sigma) = 1$. Then it must be of the form
$$
\Lambda = \begin{bmatrix}
v_1 & v_2
\end{bmatrix} = 
\begin{bmatrix}
x_1 & 0 \\
y_1 & 0 \\
z_1 & z_2 \\
w_1 & w_2 
\end{bmatrix}, \qquad x_1^2 + y_1^2 \neq 0.
$$
We can assume that $v_1$ and $v_2$ are orthonormal. We then apply a rotation $O \in \gO(n)$, so that $y$ component of $v_1$ becomes zero. Then
$$
\Lambda =\begin{bmatrix}
Ov_1 & Ov_2
\end{bmatrix} = 
\begin{bmatrix}
\sqrt{x_1^2 + y_1^2} & 0 \\
0 & 0 \\
\tilde z_1 & \tilde z_2 \\
\tilde w_1 & \tilde w_2 
\end{bmatrix},
$$
but since $\Lambda$ is Lagrangian we must have $\tilde z_2 = 0$. Changing the basis we then find
$$
\Lambda = 
\begin{bmatrix}
\dfrac{Ov_1 - (\tilde{w}_1/\tilde{w}_2)Ov_2}{\sqrt{x_1^2 + y_1^2}} & \dfrac{Ov_2 }{\tilde{w}_2}
\end{bmatrix}
=
\begin{bmatrix}
1 & 0\\ 
0 & 0\\
z & 0\\
0 & 1
\end{bmatrix}.
$$
\end{example}

Given an isotropic subspace $\Gamma$ and a Lagrangian plane $\Lambda$, we can construct a new Lagrangian plane $\Lambda^\Gamma$, which is a Lagrangian plane that contains $\Gamma$ and the dimension of $\Lambda \cap \Lambda^\Gamma$ is maximal. It is defined as
$$
\Lambda^\Gamma = (\Lambda \cap \Gamma^\angle) + \Gamma = (\Lambda+\Gamma) \cap \Gamma^\angle.
$$
If $\Gamma = \R X$ for some vector $X\in\R^{2n}$ we will simply write $\Lambda^X$ instead of $\Lambda^{\R X}$.

Let us have a look at another example that will be useful in future. 
\begin{example}
\label{ex2}
Assume that $X = \begin{pmatrix}1 & 0 & 0 & 0\end{pmatrix}^T$ and we would like to construct $\Lambda^{X}$ for $\Lambda \in L(2)$. We have that
$$
\Lambda = \begin{bmatrix}
v_1 & v_2
\end{bmatrix} =
\begin{bmatrix}
x_1 & x_2 \\
y_1 & y_2 \\
z_1 & z_2 \\
w_1 & w_2
\end{bmatrix}.
$$
Subspace $X^\angle$ consists of vectors $v \in \R^4$ whose $z$-component is zero. So assume first that $z_1 = z_2 = 0$. Then $\sigma(X,v_1) = \sigma (X,v_2) = 0$. But since it is a Lagrangian subspace, it means that $X\in \Lambda$ and by definition $\Lambda^X = \Lambda$. Thus we can take $v_1 = X$. In this case we obtain
$$
\Lambda^X = 
\begin{bmatrix}
X & v_2
\end{bmatrix} =
\begin{bmatrix}
X & v_2 - x_2X
\end{bmatrix} =
\begin{bmatrix}
1 & 0 \\
0 & y_2 \\
0 & 0 \\
0 & w_2
\end{bmatrix}.
$$

Suppose that $X\notin \Lambda$. Then $z_1^2 + z_2^2 \neq 0$ and as a result $\sigma(X, z_1v_1 + z_2v_2) \neq 0$, but $\sigma(X, z_1v_2 - z_2v_1) = 0$. So $\Lambda \cap X^\angle = z_1v_2 - z_2v_1$ and by definition
$$
\Lambda^X = 
\begin{bmatrix}
X & z_1v_2 - z_2v_1
\end{bmatrix} =
\begin{bmatrix}
1 & z_1x_2 - z_2x_1 \\
0 & z_1y_2 - z_2y_1 \\
0 & 0 \\
0 & z_1w_2 - z_2w_1
\end{bmatrix} =
\begin{bmatrix}
1 & 0 \\
0 & z_1y_2 - z_2y_1 \\
0 & 0 \\
0 & z_1w_2 - z_2w_1
\end{bmatrix}.
$$
\end{example}

Our main objects of study are going to be curves in the Lagrangian Grassmanian. The curves will always come from a flow $\Phi(t)$ of a linear Hamiltonian system \eqref{eqintro_ham}. We will simply take a point $\Lambda$ and consider a curve $\Lambda(t) = \Phi(t)\Lambda$. More generally a linear Hamiltonian system induces a dynamical system on $L(n)$. We can write down an ODE for that system using local charts. Indeed, let $(p(t),q(t))$ be a solution of \eqref{eqintro_ham} and $S(t)$ be a curve of symmetric matrices that correspond to $\Lambda(t)$. Then $q(t) = S(t)p(t)$ and we differentiate this expression. This way we obtain a Riccati equation of the form 
\begin{equation}
\dot{S} + SA+A^TS +SBS - C =0. \label{eqriccati_general}
\end{equation}
Since a coordinate chart $\Sigma^\pitchfork$ is dense in $L(n)$ the opposite is also true: a Riccati equation of the form above gives rise to a Hamiltonian system and a well defined flow on $L(n)$. In order to write down a Riccati equation in a different chart we can apply a symplectic transformation to the corresponding Hamiltonian, s.t. a given Lagrangian plane $\Lambda$ is mapped to $\Sigma$. Then we simply insert the new expressions for $A,B$ and $C$ in \eqref{eqriccati_general}.

The last ingredient that we need is the \textit{Maslov index}, which is a topological symplectic invariant of curves in the Lagrangian Grassmanian. Maslov index is the same for different curves in the same homotopy class, thus it does not change under small perturbations. We are going to give a simple coordinate definition, but one should remember that this number has many equivalent invariant definitions (see for example~\cite{maslov}). 

Let $\Lambda:[0,1] \to L(n)$ be a continuous curve. We assume that the curve $\Lambda(t)$ lies completely in some coordinate chart, i.e. there exists a plane $\Delta\in L(n)$, s.t. $\Lambda(t) \in \Delta^\pitchfork$ for all $t\in[0,1]$. We call such a curve \textit{simple}. Let $\Pi\in \Delta^\pitchfork$, and assume that end-points $\Lambda(0)$ and $\Lambda(1)$ are transversal to $\Pi$. We can then associate to $\Lambda(0),\Lambda(1)$ symmetric matrices $S_0,S_1$ that correspond to symmetric operators from $\Pi$ to $\Delta$. Maslov index of $\Lambda(t)$ with respect to $\Pi$ is defined as
$$
\Mi_\Pi \Lambda(t) = \frac{1}{2}\left( \sign S_1 - \sign S_0 \right).
$$
One can check that this definition does not depend on the choice of $\Delta$. To define the Maslov index for a general curve $\Lambda:[0,1] \to L(n)$ one should split it in a number of simple arcs $\Lambda_i(t)$, s.t. each arc lies in its own coordinate chart. Then the Maslov index of the whole curve is defined as a sum of the corresponding indices of simple arcs:
$$
\Mi_\Pi \Lambda(t) = \sum_{i=1}^N \Mi_{\Pi} \Lambda_i(t).
$$
From this definition we can easily see that the index of a simple curve depends only on the relative position of its end-points and remains the same if we perturb the curve as long as the transversality conditions are preserved. Using this one can prove that Maslov index of a curve is a homotopy invariant.

We have two very important properties of the Maslov index related to  a change of the reference plane.
\begin{theorem}[\cite{agr_gamk_symp}]
If a curve $\Lambda(\tau)\subset L(n)$ is closed, then its Maslov index does not depend on the choice of the reference plane, i.e.
$$
\Mi_{\Delta_1} \Lambda(\tau) = \Mi_{\Delta_2} \Lambda(\tau) = \Mi \Lambda(\tau), \qquad \forall \Delta_i \in L(n).
$$
If it is not closed we have the following estimate
$$
|\Mi_{\Delta_1} \Lambda(\tau) - \Mi_{\Delta_2} \Lambda(\tau)| \leq n.
$$
\end{theorem}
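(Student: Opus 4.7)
The plan is to derive an explicit formula for $\Mi_{\Delta_1}\Lambda-\Mi_{\Delta_2}\Lambda$ that depends only on the endpoints $\Lambda(0),\Lambda(1)$.

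First I would treat a simple arc $\Lambda\subset\Delta^\pitchfork$ with both reference planes $\Delta_1,\Delta_2\in\Delta^\pitchfork$. In the chart $\Delta^\pitchfork$ based at $\Delta_1$, the plane $\Delta_2$ is represented by a symmetric operator $T\colon\Delta_1\to\Delta$. Using the graph parametrization and identifying $\Delta_2\cong\Delta_1$ by the natural projection, a direct computation shows that if $\Lambda(t)$ corresponds to $S(t)$ in the chart based at $\Delta_1$, then it corresponds to a matrix congruent to $S(t)-T$ in the chart based at $\Delta_2$. Since congruence preserves signatures, the definition of the Maslov index gives
$$
\Mi_{\Delta_1}\Lambda-\Mi_{\Delta_2}\Lambda=\phi(\Lambda(1))-\phi(\Lambda(0)),\qquad \phi(\mu):=\tfrac12\bigl(\sign S_\mu-\sign(S_\mu-T)\bigr),
$$
and crucially $\phi$ depends on $\mu$ alone, not on the particular arc.

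For a general curve I would decompose it into simple arcs, perturbing the subdivision points slightly so that each intermediate endpoint is transversal to both $\Delta_1$ and $\Delta_2$ (a generic condition). Summing the single-arc formula, the intermediate endpoints cancel pairwise and produce a telescoping identity
$$
\Mi_{\Delta_1}\Lambda-\Mi_{\Delta_2}\Lambda=\phi(\Lambda(1))-\phi(\Lambda(0)).
$$
If $\Lambda$ is closed then $\Lambda(0)=\Lambda(1)$ and the right-hand side vanishes. For an open curve, writing $T=T_+-T_-$ via its spectral decomposition with $p_\pm:=\rank T_\pm$, the Cauchy interlacing inequality applied successively to the rank-$p_+$ and rank-$p_-$ perturbations $S_\mu\to S_\mu-T_+\to S_\mu-T$ yields $\sign S_\mu-\sign(S_\mu-T)\in[-2p_-,2p_+]$, so $\phi(\mu)\in[-p_-,p_+]$, whence
$$
|\Mi_{\Delta_1}\Lambda-\Mi_{\Delta_2}\Lambda|\le p_++p_-\le n.
$$

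The main technical obstacle is the situation in which $\Delta_1$ and $\Delta_2$ cannot be placed in a single chart $\Delta^\pitchfork$ together with a given simple arc. This is circumvented by inserting intermediate Lagrangian planes chosen generically so that the preceding analysis applies to each consecutive pair; such planes exist because the transversality conditions involved define open dense subsets of $L(n)$. The closed-curve statement follows immediately from the telescoping under this reduction, while preserving the sharp bound $n$ in the open case requires choosing the intermediate plane carefully (for instance, along a path in $L(n)$ connecting $\Delta_1$ and $\Delta_2$) so that the individual rank contributions add to at most $n$.
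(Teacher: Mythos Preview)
The paper does not give its own proof of this theorem; it is stated with a citation to \cite{agr_gamk_symp} and used as a black box. So there is no in-paper argument to compare against.

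Your approach is the standard one and is essentially correct: the function $\phi(\mu)=\tfrac12\bigl(\sign S_\mu-\sign(S_\mu-T)\bigr)$ is, up to normalization, the Kashiwara--H\"ormander index of the triple $(\Delta_1,\Delta_2,\mu)$, and the telescoping identity you derive is exactly its cocycle property. The interlacing bound giving $\phi(\mu)\in[-p_-,p_+]$ is right and yields the estimate.

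The only genuine gap is in your last paragraph. Inserting an intermediate reference plane $\Delta_3$ and using the triangle inequality gives $|\Mi_{\Delta_1}-\Mi_{\Delta_2}|\le 2n$, not $n$; the vague appeal to ``choosing the intermediate plane carefully'' does not repair this. The correct fix is not to interpolate between $\Delta_1$ and $\Delta_2$, but to show that $\phi(\mu)$ is an \emph{intrinsic} function of the Lagrangian triple $(\Delta_1,\Delta_2,\mu)$, independent of the auxiliary chart $\Delta$ used to define $S_\mu$ and $T$. Once that is established (this is precisely the well-definedness of the Kashiwara index), the telescoping sum over any subdivision into simple arcs collapses to $\phi(\Lambda(1))-\phi(\Lambda(0))$ with the \emph{same} $\phi$ throughout, and your rank bound $p_++p_-\le n$ applies directly without any loss.
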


There exist several other equivalent definitions of the Maslov index~\cite{gosson,maslov}. Usually one defines it as an intersection index, but this definition is rather long and has many subtleties. Nevertheless it allows to prove easily the following theorem that we will use in Section~\ref{sec:kneser}.

\begin{theorem}[\cite{agr_gamk_symp}]
\label{thm:comparison}
Let $H(t)$ be a quadratic non-autonomous Hamiltonian and let $\Phi(t)$ be a flow of the corresponding Hamiltonian system. Fix two transversal Lagrangian planes $\Delta$ and $\Lambda$ and assume that $\Delta \pitchfork \Phi(T)\Lambda$. If $H(t)|_{\Delta} \geq 0$ for $t\in[0,T]$, then
$$
\Mi_{\Delta} \Phi(t) \Lambda = \sum_{t\in[0,T]} \dim\left(\Delta \cap \Phi(t)\Lambda  \right).
$$
If $H(t)|_{\Delta} \leq 0$, then the same formula holds with a minus sign in front of the sum.
\end{theorem}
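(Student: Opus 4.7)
The plan is to express the Maslov index as a sum of signed multiplicities of crossings of the curve $\Lambda(t):=\Phi(t)\Lambda$ with the Maslov cycle $\{\Lambda'\in L(n):\Lambda'\cap\Delta\neq 0\}$, and then to identify each crossing form with $H(t)$ restricted to $\Lambda(t)\cap\Delta$, whose sign is fixed by the hypothesis.

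\textbf{Step 1 (crossing form).} Choose Darboux coordinates in which $\Delta=\Pi$, and work in the chart $\Sigma^{\pitchfork}$; then $\Lambda(t)$ is represented by a symmetric matrix $S(t)$ satisfying the Riccati equation~\eqref{eqriccati_general}, and $\Lambda(t)\cap\Delta$ is identified with $\ker S(t)$ via $v\leftrightarrow(v,0)$. For $v\in K:=\ker S(t_0)$ the identities $S(t_0)v=0$ and $v^{T}S(t_0)=0$ kill every term of $\dot S=C-SA-A^{T}S-SBS$ except the first, giving
$$
v^{T}\dot S(t_0)v \;=\; v^{T}C(t_0)v \;=\; 2\,H(t_0)(v,0).
$$
Hence the crossing form on $K$ is exactly $2H(t_0)|_{K}$.

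\textbf{Step 2 (Maslov index via signed crossings).} For a smooth path $S(t)$ of symmetric matrices with invertible endpoints and only regular crossings (i.e.\ $\dot S(t_0)|_{\ker S(t_0)}$ nondegenerate at each crossing time), elementary perturbation theory for eigenvalues of symmetric matrices together with the chart definition of $\Mi_\Delta$ from Section~\ref{sec:sympl} give
$$
\Mi_\Delta \Lambda(t)\;=\;\sum_{t_0}\tfrac{1}{2}\sign\!\bigl(\dot S(t_0)|_{\ker S(t_0)}\bigr).
$$
When the crossing form is positive definite on $K$ its signature equals $\dim K=\dim(\Lambda(t_0)\cap\Delta)$. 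Under the hypothesis $H|_{\Delta}\ge 0$ with strict positivity on each $K$, Step~1 therefore yields the stated formula. The case $H|_{\Delta}\le 0$ follows verbatim with all signs reversed.

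\textbf{Step 3 (degenerate crossings).} The main obstacle is that the hypothesis is only semi-definite, so $H(t_0)|_{K}$ could itself be degenerate. I would handle this by perturbing $H$ to $H_{\varepsilon}:=H+\varepsilon H_{0}$ with $H_{0}|_{\Delta}$ positive definite. Transversality $\Delta\pitchfork\Phi_\varepsilon(T)\Lambda$ is an open condition, so it persists for small $\varepsilon>0$; for such $\varepsilon$ every crossing of the perturbed curve is regular with positive definite crossing form, hence the formula holds for the perturbed flow. Both sides are stable under this homotopy: the Maslov index is a homotopy invariant as long as endpoints remain transversal, and the right-hand side is preserved because a multiplicity-$k$ degenerate crossing of the original curve splits, under the perturbation, into regular crossings of total multiplicity $k$ (all of the same sign, thanks to $H_\varepsilon|_\Delta > 0$). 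Letting $\varepsilon\to 0^{+}$ yields the equality in general. Finally, analyticity of the flow of the quadratic system $\dot\lambda=-J\nabla H(t,\lambda)$ and the transversality at $T$ imply that the crossing times form a finite subset of $[0,T]$, so the sum on the right-hand side is well-defined.
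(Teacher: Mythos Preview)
The paper does not give its own proof of this theorem: it is quoted from \cite{agr_gamk_symp}, with the remark that the intersection-index definition of the Maslov index ``allows to prove easily'' the result. Your crossing-form strategy is precisely that standard route, so there is nothing to compare against in the paper itself. Let me instead point out where your argument needs work.

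\emph{Minor slip in Step 2.} With the paper's convention $\Mi_\Delta=\tfrac12(\sign S_1-\sign S_0)$, a regular crossing whose crossing form is positive definite on a $k$-dimensional kernel makes $\sign S$ jump by $2k$, hence contributes $k$ to the Maslov index. The signature of the crossing form itself is $k$, so the correct local formula is $\Mi_\Delta=\sum_{t_0}\sign\bigl(\dot S(t_0)|_{\ker S(t_0)}\bigr)$, without the $\tfrac12$. Your next sentence (``its signature equals $\dim K$'') is inconsistent with the $\tfrac12$ you wrote.

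\emph{Genuine gap in Step 3.} Your perturbation argument establishes that $\Mi_\Delta\Lambda=\Mi_\Delta\Lambda_\varepsilon=\sum_t\dim(\Delta\cap\Lambda_\varepsilon(t))$, but to finish you must show $\sum_t\dim(\Delta\cap\Lambda(t))=\sum_t\dim(\Delta\cap\Lambda_\varepsilon(t))$. You assert that a degenerate crossing of multiplicity $k$ splits into regular crossings of total multiplicity $k$; this is exactly the nontrivial content of the theorem in the semidefinite case, and you have not justified it. The obstruction is a \emph{tangential} intersection: an eigenvalue of $S(t)$ with $\lambda(t_0)=0$ and $\lambda(t)<0$ for $t\neq t_0$ near $t_0$. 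Such a tangency contributes $1$ to the unperturbed right-hand side but contributes nothing to the Maslov index, and under a generic positive perturbation it simply disappears. The hypothesis $C(t)\ge 0$ gives $v^T\dot S(t_0)v\ge 0$ on $\ker S(t_0)$, which forbids downward crossings, but by itself does not visibly forbid tangencies. You need an additional argument ruling them out --- for instance, in dimension one, $S(t_0)=0$, $C(t_0)=0$, $C\ge 0$ force $\dot C(t_0)=0$ and hence $\ddot S(t_0)=\dot C(t_0)=0$, contradicting $\ddot S(t_0)<0$; the higher-dimensional analogue is what actually uses the full strength of $H|_\Delta\ge 0$.

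\emph{Smaller gap at the end of Step 3.} The theorem only assumes $H(t)$ is quadratic in $(p,q)$; nothing is said about regularity in $t$. The flow is then linear (hence analytic) in the space variables but need not be analytic in $t$, so $\det S(t)$ is not a priori analytic and the set of crossing times need not be finite. You should either add a regularity hypothesis or argue finiteness differently.
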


These two theorems will be our main tool in proving oscillation results in Section~\ref{sec:kneser}. 

\section{$\cL$-derivatives}
\label{sec:l-der}

We will derive the Jacobi equations and study them using the so called $\cL$-derivatives. A $\cL$-derivative is a rule that assigns to an admissible space of variations a Lagrangian plane in some symplectic space. As we add variations we can compare the relative positions of the corresponding $\cL$-derivatives and deduce from that how the inertia indices and nullity of the Hessian change as we consider a bigger and bigger space of variations. As a result one can recover the classical theory of Jacobi and much more. This theory is applicable in a great variety of cases, even when there is no Jacobi equation at all. We begin by explaining the abstract setting and then we specialize the results to optimal control problems.

Assume that we have the following constrained variational problem. Let $J: \cU \to \R$ be a smooth functional and $F: \cU \to M$ be a smooth map, where $\cU$ is a Banach manifold and $M$ is a finite-dimensional manifold. Given a point $q\in M$, we are interested in finding $\tilde{u} \in F^{-1}(q)$ that minimize $J$ among all other points $u\in F^{-1}(q) $. In the case of optimal control problems $\cU$ is the space of admissible controls. The map $F$ is usually taken to be the end-point map, which we will introduce in the next section. 

The first step is to apply the Lagrange multiplier rule that says that if $\tilde{u}$ is a minimal point then there exists a covector $\lambda \in T_q^* M$ and a number $\nu\in\{0,1\}$, s.t. 
\begin{equation}
\label{eqlagrange}
\langle \lambda, dF[\tilde u](w) \rangle - \nu dJ[\tilde u](w) = 0,\qquad \forall w\in T_{\tilde{u}}\cU. 
\end{equation}
A pair $(\tilde{u},\lambda)$ that satisfies the equation above is called a \textit{Lagrangian point} and $\tilde{u}$ is called a critical point of $(F,J)$. There are of course many critical points that are not minimal. So in order to find the minimal ones we have to apply higher order conditions for minimality. For example, we can look at the Hessian $\Hess(F,\nu J)[\tilde{u},\lambda]$ at a Lagrangian point $(\tilde{u},\lambda)$ that we define as
\begin{equation}
\label{eqhessian}
\Hess (F,\nu J)[\tilde u, \lambda] := \left( \nu d^2 J[\tilde u] - \langle \lambda, d^2 F[\tilde{u}] \rangle \right)|_{\ker dF[\tilde{u}]}.
\end{equation}
In the normal case this expression coincides with the Hessian of $J$ restricted to the level set $F^{-1}(q)$. The index and the nullity of the Hessian are directly related to optimality of the critical point $\tilde{u}$~\cite{as}. 

We are now ready to define $\cL$-derivatives. We linearise \eqref{eqlagrange} with respect to $\lambda$ and $u$, and obtain the following equation
$$
\langle \xi, dF[\tilde u](w) \rangle + \langle \lambda, d^2F[\tilde u](v,w) \rangle  - \nu d^2J[\tilde u](v,w)=0.
$$
Or if we define $Q(v,w) := \langle \lambda, d^2F[\tilde u ](v,w) \rangle + \nu d^2J[\tilde{u} ](v,w)$, we can rewrite this as
\begin{equation}
\label{eql_deriv_def}
\langle \xi, dF[\tilde u](w) \rangle + Q(v,w)=0.
\end{equation}

A \textit{$\cL$-derivative} of a pair $(F,J)$ at a Lagrangian point $(\tilde{u},\lambda)$ constructed over a finite-dimensional space of variations $V \subset T_{\tilde{u}} \cU$ is the set
$$
\cL(F,J)[\tilde{u},\lambda](V) = \{(\xi,dF[\tilde{u}](v))\in T_{\lambda}(T^*M) \, : \, (\xi,v)\in (T_{\lambda}(T^*_{q}M), V) \text{ solve \eqref{eql_deriv_def} for } \forall w\in V\}.
$$
This set is a Lagrangian plane~\cite{agrachev_cime}. The reason why we do not take directly $T_{\tilde{u}} \cU$ instead of $V$ is that it is a linear equation defined on an infinite-dimensional space and it might be ill-posed. In this case $\cL(F,J)[\tilde{u},\lambda](V)$ is just isotropic. But if we have chosen the right topology for our space of variations, we are going to get exactly $\dim M$ independent solutions. 

To define a $\cL$-derivative over an infinite-dimensional space $V \subset T_{\tilde{u}} \cU$ we must take a generalized limit or a limit of a net over all finite dimensional subspaces $U\subset V$:
$$
\cL(F,J)[\tilde{u},\lambda](V) = \lim_{U\nnearrow V} \cL(F,J)[\tilde{u},\lambda](U),
$$
with the partial ordering given by inclusion. When $V$ is the whole space of available variations, we simply write $\cL(F,J)[\tilde{u},\lambda]$ for the corresponding $\cL$-derivative. 

We have the following important theorem proved in~\cite{agr_lderiv}, that gives the existence of this limit and a way to compute it.
\begin{theorem}
\label{thm:main}
Let $(\tilde{u},\lambda)$ be a Lagrangian point of $(F,J)$. 
\begin{enumerate}
\item If either the positive or the negative inertia index of $\Hess (F,\nu J)[\tilde u,\lambda]$ is finite, then $\cL(F,J)[\tilde{u},\lambda]$ exists;
\item $\cL(F,J)[\tilde{u},\lambda] = \cL(F,J)[\tilde{u},\lambda](V)$ for any $V$ dense in $T_{\tilde{u}}\cU$.
\end{enumerate}
\end{theorem}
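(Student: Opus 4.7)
The plan is to pass to a limit in the compact Lagrangian Grassmanian $L(n)$ with $n = \dim M$, using a monotonicity property of the net $\{\cL(F,J)[\tilde u,\lambda](U)\}$ indexed by finite-dimensional subspaces $U \subset T_{\tilde u}\cU$ ordered by inclusion. The first thing I would do is identify the ambient symplectic space as $T_\lambda(T^*M) \simeq \R^{2n}$ with its canonical form, and observe that the vertical fiber $\Pi = T_\lambda(T_q^*M)$ is Lagrangian. Every $\cL(U)$ lives in $L(n)$ once $U$ is large enough to make the defining equation \eqref{eql_deriv_def} solvable in $dF[\tilde u](V) = T_q M$, which happens for all $U$ containing a fixed finite-dimensional subspace whose image under $dF[\tilde u]$ spans $T_q M$ (such a subspace exists by smoothness of $F$ provided $dF[\tilde u]$ is locally surjective, or in the abnormal case one reduces to the image).

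Next I would establish the key monotonicity: for $U_1 \subset U_2$ one has a ``skew-orthogonal reduction'' relating $\cL(U_1)$ to $\cL(U_2)$ through the isotropic subspace obtained by projecting $U_2/U_1$ via $(\xi, dF[\tilde u](\cdot))$. This is modelled on the $\Lambda \mapsto \Lambda^\Gamma$ construction recalled just before Example~\ref{ex2}: enlarging the space of variations adds new admissible $v$'s, which corresponds exactly to enlarging the ``data'' $\Gamma$ with respect to which the Lagrangian plane is computed. The upshot is a controlled, monotone behavior of the intersections $\dim(\cL(U) \cap \Pi)$ as $U$ grows, because these intersections correspond to the degenerate directions of the Hessian $Q$ restricted to $\ker dF[\tilde u] \cap U$.

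For item (1), the finiteness hypothesis on one inertia index of $\Hess(F,\nu J)[\tilde u,\lambda]$ is what makes the net converge and not merely cluster. Concretely, finiteness of (say) the negative inertia index on all of $\ker dF[\tilde u]$ forces this inertia index, computed on $\ker dF[\tilde u] \cap U$, to stabilize as $U$ grows. Combined with the monotone reduction and compactness of $L(n)$, any two convergent subnets must have the same limit: one can test equality by computing the Maslov-type intersection numbers with $\Pi$ (via Theorem~\ref{thm:comparison}, or just by inertia counting), and eventual stabilization pins down the limit uniquely. For item (2), given $V$ dense in $T_{\tilde u}\cU$, I would show that any finite-dimensional $U \subset T_{\tilde u}\cU$ can be approximated arbitrarily well by a finite-dimensional $U' \subset V$; since the map $U \mapsto \cL(F,J)[\tilde u,\lambda](U)$ is continuous in the topology of $L(n)$ on the open set of $U$'s that make \eqref{eql_deriv_def} non-degenerate, the two limits (over all $U$, and over $U \subset V$) coincide.

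The main obstacle is the transition from clusterwise convergence (guaranteed by compactness of $L(n)$) to genuine convergence of the whole directed net. The monotonicity supplied by the skew-orthogonal reduction is only a partial order, not enough by itself; what closes the argument is the integer-valued invariant $\dim(\cL(U) \cap \Pi)$, together with the finite-inertia hypothesis, which together force this invariant (and hence, via the local chart structure around $\Pi^\pitchfork$, the whole Lagrangian plane) to stabilize. Carrying this out cleanly requires choosing an auxiliary Lagrangian plane transversal both to a tail of the net and to $\Pi$, and then translating stabilization of inertia indices into convergence of the corresponding symmetric-matrix coordinates of $\cL(U)$.
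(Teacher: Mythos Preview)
The paper does not prove this theorem at all: it is quoted from~\cite{agr_lderiv}, as the sentence immediately preceding the statement makes explicit (``We have the following important theorem proved in~\cite{agr_lderiv}\ldots''). So there is no proof in the present paper against which to compare your proposal.

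As for the proposal itself, the overall strategy---compactness of $L(n)$, monotonicity under enlargement of the variation space via the $\Lambda\mapsto\Lambda^\Gamma$ construction, and stabilization of an integer invariant forced by the finite-index hypothesis---is the right shape and is indeed close in spirit to how the result is established in the cited reference. However, what you have written is a sketch rather than a proof. The critical step you flag yourself as ``the main obstacle'' (passing from subnet convergence to convergence of the full net) is not actually carried out: you assert that stabilization of $\dim(\cL(U)\cap\Pi)$ together with inertia-index stabilization pins down the limit, but you do not explain why two cluster points with the same intersection dimension with $\Pi$ must coincide, nor why the symmetric-matrix coordinates converge rather than merely having bounded signature. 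The genuine mechanism in~\cite{agr_lderiv} is that the map $U\mapsto\cL(U)$ is monotone with respect to a partial order on $L(n)$ tied to the Maslov index, and monotone nets in $L(n)$ converge; your sketch gestures at this but does not set up the order or prove the convergence lemma. For item~(2), continuity of $U\mapsto\cL(U)$ in the Grassmannian topology of finite-dimensional subspaces is asserted but not proved, and it is false at degenerate $U$; one needs instead a cofinality argument combined with the same monotonicity.
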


$\cL$-derivatives contain information about the inertia indices and nullity of the Hessian \eqref{eqhessian} restricted to some space of variations. By comparing two $\cL$-derivatives constructed over two subspace $V\subset W$, we can see how the inertia indices change as we add variations to our variations space~\cite{agr_lderiv}. 

In the next section we will write down explicit expressions for the optimal control problem we are studying, and define Jacobi curves. In article~\cite{A_and_me} we gave a general algorithm for their approximation. In this article we focus on situations when an exact characterization is possible.

\section{$\cL$-derivatives for optimal control systems}
\label{sec:statement}
Let us consider the optimal control problem~\eqref{eq:control}. We introduce another type of variations called time variations. They can be used to obtain necessary~\cite{agrachev_bang2} and sufficient~\cite{agrachev_bang} optimality conditions. We have discussed them in detail in~\cite{agrachev_moi} (see also~\cite{A_and_me}), so we just give a quick recap. 

Let us consider a change of time 
$$
t(s) = \int_0^s (1+\alpha(\theta)) d\theta,
$$
where we assume $\alpha(\theta) > -1$, in order to have invertibility of $t(s)$. Then in the new time we can write
\begin{align*}
\dot{q} = (1+\alpha(s)) f(q,u(t(s))), \\
\dot{t} = 1+\alpha(s),
\end{align*}
$$
\int_{0}^{t^{-1}(T)} (1+\alpha(s))L(q,u(t(s)))ds \to \min,
$$
where we have added the time as a new variable in order not to add any additional constraints on the controls. One can then proceed to show that $\tilde{u}(t)$ is minimal if and only if the control $(\alpha(s),u(s)) = (0,\tilde{u}(t(s))$ is minimal in the new problem above. We will denote $\hat U = U\times (-1,\infty)$ the new space of control values and $\hat{u}$ a critical control in $\hat U$. 
\begin{remark}
By abusing slightly the notations we consider the pair $(u,\alpha)$ consisting of the original control parameter $u$ and time variations $\alpha$ as a new control, that we denote again by $u$.
\end{remark}
 
In order to reformulate the optimal control problem above as a constrained variational problem of the previous section one  introduces the \textit{end-point map} $E_t : L^\infty([0,T],\hat U) \to M$. It takes an admissible control $u(t)$ and associates to it final point of the trajectory which is a solution of the Cauchy problem \eqref{eq:control} with $q(0) = q_0$. Then in the notations of the previous section $J=J_T$, $E=E_T$. Applying the Lagrange multiplier rule we find that if $(\tilde{u}(t),\tilde{q}(t))$ is an optimal control and the corresponding optimal trajectory, then there must exist a covector $\lambda(T) \in T^*_{\tilde{q}(T)} M$ and a number $\nu\in\{0,1\}$, s.t.
$$
\langle \lambda(T), dE_T[\tilde{u}](w)\rangle - \nu dJ_T [\tilde{u}](w) = 0,
$$
for suitable variations $w$. Let $P_{t}^T$ be the flow of \eqref{eq:control} under the control $\tilde u(t)$ from time $t$ to time $T$. From the definitions it is clear that $dE_T|_{L^\infty([0,t],\hat U)} = (P^T_t)_* dE_t$ and $dJ_T|_{L^\infty([0,t],\hat U)} = dJ_t$, where $(P^T_t)_*$ is the differential of $P^T_t$. If we denote $\lambda(t) = (P^T_t)^* \lambda(T) \in T^*_{\tilde{q}(t)}M$ then, we have along $\tilde{q}(t)$
\begin{equation*}
\langle \lambda(t), dE_t[\tilde{u}](w)\rangle - \nu dJ_t [\tilde{u}](w) = 0.
\end{equation*}
These first order conditions are equivalent to a weak version of the Pontryagin maximum principle which states that $\lambda(t)$ must satisfy a Hamiltonian system
\begin{equation}
\label{eq:ham}
\dot \lambda = \vec{h}_{\tilde{u}(t)}\lambda,
\end{equation}
where 
$$
h(u,\lambda) = \langle \lambda, f(u,q)\rangle - \nu L(u,q),
$$
and along the extremal curve an extremum condition
$$
\left.\frac{\p h(u,\lambda(t))}{\p u}\right|_{u = \tilde{u}} = 0
$$
is satisfied.

From the previous discussion it would seem natural to compute $\cL$-derivatives of $(E_t,J_t)$ in order to study the second variation. In this case we would obtain a one-parametric family of Lagrangian planes which encode information about the corresponding Hessian. However in this case Lagrangian planes will lie in different symplectic spaces and thus we can not compare directly their relative position. To fix this problem, we simply take the flow of the Hamiltonian system~\eqref{eq:ham} that we denote as $\Phi_t$ and compute $\Phi_t^* \cL(E_t,J_t)[\tilde{u},\lambda](V)$. 

\begin{remark}
From now on we will consider $\Phi_t^* \cL(E_t,J_t)[\tilde{u},\lambda](V)$ only and for brevity we will omit the pull-back by just writing $\cL(E_t,J_t)[\tilde{u},\lambda](V)$.
\end{remark}

It remains to give an explicit expression of \eqref{eql_deriv_def}.
We define
$$
b(\tau) =\left. \frac{\p^2}{\p u^2}\right|_{u=\tilde{u}} h(u,\lambda(t))
$$
and
$$
X(\tau) = \left. \frac{\p}{\p u}\right|_{u=\tilde{u}} \overrightarrow{\Phi_t^*(h(u,\cdot))(\lambda(0))},
$$
which is essentially the linearization of a pull-back of the PMP Hamiltonian using the flow $\Phi_t$. In~\cite{A_and_me} we have proven the following characterization of the previously mentioned $\cL$-derivatives of optimal control problems.

\begin{proposition}
\label{prop:l_def}
$\cL(E_t,J_t)[\tilde{u},\lambda]$ consists of the vectors of the form
\begin{equation}
\label{eql_der_def1}
\eta_t = \eta_0 + \int_0^t X(\tau) v(\tau) d\tau,
\end{equation}
where $\eta_0 \in T_{\lambda_0}(T^*_{q_0}M)$ and $v\in V$ satisfying
\begin{equation}
\label{eql_der_def2}
\int_0^t \left( \sigma\left( \eta_0 + \int_0^\tau X(\theta) v(\theta) d\theta, X(\tau)w(\tau)  \right) + b(\tau)(v(\tau),w(\tau)) \right)d\tau =0, \qquad \forall w\in V.
\end{equation}
\end{proposition}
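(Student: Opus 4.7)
The plan is to unpack the abstract definition of $\cL$-derivative from Section~\ref{sec:l-der} specialized at $F=E_t$, $J=J_t$, and rewrite every ingredient in terms of data at time $0$ using the symplectic pull-back by $\Phi_t^*$. Concretely the proof decomposes into three ingredients: (i) identifying $\Phi_t^* dE_t[\tilde u](v)$ with the integral $\int_0^t X(\tau)v(\tau)d\tau$ appearing in \eqref{eql_der_def1}; (ii) computing the bilinear form $Q(v,w) = \langle \lambda, d^2 E_t[\tilde u](v,w)\rangle - \nu d^2 J_t[\tilde u](v,w)$ pulled back by $\Phi_t^*$; and (iii) rewriting the linear coupling $\langle \xi, dE_t[\tilde u](w)\rangle$ via the symplectic form so that $\xi$ is replaced by a free vertical vector $\eta_0 \in T_{\lambda_0}(T^*_{q_0}M)$.

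For (i) I would use the classical Duhamel formula
$$dE_t[\tilde u](v) = \int_0^t (P^t_\tau)_*\,\partial_u f(\tilde u,\tilde q)\,v(\tau)\,d\tau,$$
whose Hamiltonian lift, followed by $\Phi_t^*$, matches the definition of $X(\tau)$ term-wise. For (iii), using that $\sigma$ is invariant under $\Phi_t^*$ and that $\sigma$ restricted to vertical-horizontal pairs reproduces the cotangent pairing, one obtains $\langle \xi, dE_t[\tilde u](w)\rangle = \int_0^t \sigma(\eta_0, X(\tau)w(\tau))d\tau$ with $\eta_0 = \Phi_t^*\xi$. For (ii), the second variation of $E_t$ carries a Volterra double-integral structure whose off-diagonal part, once paired with $\lambda$, becomes $\int_0^t\int_0^\tau \sigma(X(\theta)v(\theta), X(\tau)w(\tau))\,d\theta\,d\tau$ via the identity $\langle \lambda, [\vec f_1, \vec f_2]\rangle = \sigma(\vec f_1,\vec f_2)$ for Hamiltonian lifts, while the diagonal part together with $\nu d^2 J_t$ collapses pointwise to $\int_0^t b(\tau)(v(\tau),w(\tau))d\tau$. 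Substituting everything into $\langle \xi, dE_t[\tilde u](w)\rangle + Q(v,w) = 0$ and merging the two $\sigma$-integrals into a single nested one yields exactly \eqref{eql_der_def2}.

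The main obstacle is the careful sign bookkeeping in step (ii): one has to align the antisymmetrization produced by the Lie bracket inside the Volterra kernel with the sign convention for $\sigma$, while remaining consistent with the Pontryagin sign convention $h = \langle \lambda, f\rangle - \nu L$. A related subtlety is that $Q$ is intrinsically a symmetric bilinear form but is expressed in \eqref{eql_der_def2} through a non-symmetric nested integral; its symmetry in $(v,w)$ must be recovered via the antisymmetry of $\sigma$ combined with a Fubini-type manipulation on the Volterra kernel, and verifying this equivalence is the most technical part.
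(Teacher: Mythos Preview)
The paper does not actually give a proof of this proposition: it is quoted from the companion article~\cite{A_and_me} and then used ``directly as a working definition''. So there is no in-paper argument to compare your attempt against.

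That said, your outline is the standard route to this formula and is essentially what one finds in the literature (and presumably in~\cite{A_and_me}). The three ingredients you isolate are the right ones: the Duhamel expression for $dE_t$ lifted and pulled back to $T_{\lambda_0}(T^*M)$ gives exactly the integral in~\eqref{eql_der_def1}; the identification $\langle \xi, dE_t[\tilde u](w)\rangle = \int_0^t \sigma(\eta_0, X(\tau)w(\tau))\,d\tau$ via the vertical/horizontal splitting and the $\Phi_t$-invariance of $\sigma$ is correct; and the Volterra computation of the Hessian, using $\{h_1,h_2\} = \sigma(\vec h_1,\vec h_2)$ for the off-diagonal kernel and collecting the diagonal into $b(\tau)$, is the standard mechanism. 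The sign and symmetry issues you flag in~(ii) are genuine and are indeed where most of the bookkeeping lives, but there is no missing idea.
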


One can use this proposition directly as a working definition. It immediately allows to prove an important property of $\cL$-derivatives of optimal control problems.

\begin{lemma}
\label{lemm:add}
Let $0<t_1<t_2$ and assume that $\cL(E_{t_2},J_{t_2})[\tilde{u},\lambda]$ exists. We denote by $V$ some finite-dimensional subspace of $L^\infty([t_1,t_2],\hat U)$ and we consider the following equation
\begin{equation}
\label{eqalt_l_der}
\int_{t_1}^{t_2} \left( \sigma\left( \lambda + \int_0^\tau X(\theta) v(\theta) d\theta, X(\tau)w(\tau)  \right) + b(\tau)(v(\tau),w(\tau)) \right)d\tau =0, \qquad \forall w\in V,
\end{equation}
where $v \in V$ and $\lambda \in \cL(E_{t_1},J_{t_1})[\tilde{u},\lambda]$. Then we can characterize alternatively $\cL(E_{t_2},J_{t_2})[\tilde{u},\lambda]$ as a generalized limit of Lagrangian subspaces
$$
\left\{ \lambda + \int_{t_1}^{t_2} X(\tau)v(\tau) d\tau \, : \, \lambda \in \cL(t_1), v\in V \text{ satisfy \eqref{eqalt_l_der} for any } w \in V    \right\}.
$$
\end{lemma}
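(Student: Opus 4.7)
The plan is to unravel the defining equation \eqref{eql_der_def2} for $\cL(E_{t_2},J_{t_2})[\tilde u,\lambda]$ by splitting admissible variations according to whether they are supported on $[0,t_1]$ or on $[t_1,t_2]$, and then to exchange two generalized limits using Theorem~\ref{thm:main}(2).

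First, extending controls by zero yields a direct-sum decomposition $L^\infty([0,t_2],\hat U)=L^\infty([0,t_1],\hat U)\oplus L^\infty([t_1,t_2],\hat U)$. Any finite-dimensional $V\subset L^\infty([0,t_2],\hat U)$ sits inside some $V_1\oplus V_2$ with $V_i$ finite-dimensional in the $i$-th factor, so the collection of direct sums is cofinal in the poset of finite-dimensional subspaces. By Theorem~\ref{thm:main}(2) it therefore suffices to compute $\cL(E_{t_2},J_{t_2})[\tilde u,\lambda]$ as the generalized limit along this cofinal family.

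Next, fix $V_1\oplus V_2$ and decompose $v=v_1+v_2$, $w=w_1+w_2$ with $v_i,w_i\in V_i$. Splitting $\int_0^{t_2}=\int_0^{t_1}+\int_{t_1}^{t_2}$ in \eqref{eql_der_def2} and using that $v_1,w_1$ vanish on $[t_1,t_2]$ and $v_2,w_2$ vanish on $[0,t_1]$, the two halves of the integral decouple. Set $\mu:=\eta_0+\int_0^{t_1}X(\theta)v_1(\theta)\,d\theta$. Specialising to $w_2=0$ yields exactly the equation of Proposition~\ref{prop:l_def} at time $t_1$ for the pair $(\eta_0,v_1)$, i.e. $\mu\in\cL(E_{t_1},J_{t_1})[\tilde u,\lambda](V_1)$. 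Specialising to $w_1=0$ and using $\int_0^\tau X(\theta)v(\theta)\,d\theta=\mu-\eta_0+\int_{t_1}^\tau X(\theta)v_2(\theta)\,d\theta$ for $\tau\in[t_1,t_2]$ produces exactly \eqref{eqalt_l_der} with $\lambda$ there replaced by $\mu$. Conversely, by bilinearity in $(v,w)$ the two decoupled conditions add up to the original equation on $[0,t_2]$. Since moreover $\eta_{t_2}=\mu+\int_{t_1}^{t_2}X(\tau)v_2(\tau)\,d\tau$, one obtains the set identity
\[
\cL(E_{t_2},J_{t_2})[\tilde u,\lambda](V_1\oplus V_2)=\left\{\mu+\int_{t_1}^{t_2}X(\tau)v_2(\tau)\,d\tau \ :\ \mu\in\cL(E_{t_1},J_{t_1})[\tilde u,\lambda](V_1),\ v_2\in V_2\text{ solves }\eqref{eqalt_l_der}\right\}.
\]

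Finally, letting $V_1$ exhaust $L^\infty([0,t_1],\hat U)$, Theorem~\ref{thm:main}(2) replaces $\cL(E_{t_1},J_{t_1})[\tilde u,\lambda](V_1)$ on the right with the full $\cL(E_{t_1},J_{t_1})[\tilde u,\lambda]$, whose existence follows from the assumed existence at $t_2$ because extension-by-zero embeds admissible variations at time $t_1$ into those at time $t_2$ and bounds the corresponding inertia indices of the Hessian. Letting then $V_2$ exhaust $L^\infty([t_1,t_2],\hat U)$ produces the announced characterisation. The main obstacle is legitimising this iterated limit: it works because the direct-sum net $\{V_1\oplus V_2\}$ and its independent enlargements are cofinal in the same poset, and because the decoupling above is linear in all arguments, so that the iterated limit agrees with the ambient generalized limit defining $\cL(E_{t_2},J_{t_2})[\tilde u,\lambda]$.
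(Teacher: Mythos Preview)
The paper does not actually supply a proof of this lemma; it only remarks that Proposition~\ref{prop:l_def} ``immediately allows to prove'' it and then passes to consequences, with the details presumably in the cited companion article. Your argument is exactly the natural way to flesh out that remark: split variations by support, observe that the defining equation \eqref{eql_der_def2} decouples over $[0,t_1]$ and $[t_1,t_2]$, identify the $[0,t_1]$-piece with the construction of $\cL(E_{t_1},J_{t_1})$, and pass to the limit. This is correct and is almost certainly what the authors have in mind.

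One point deserves a slightly sharper justification. You pass from the double limit over the cofinal net $\{V_1\oplus V_2\}$ to the iterated limit $\lim_{V_2}\lim_{V_1}$, and your last paragraph appeals to linearity for this. What is really being used is that, for fixed finite-dimensional $V_2$, the assignment $\Lambda\mapsto\{\lambda+\int_{t_1}^{t_2}Xv_2:\lambda\in\Lambda,\ v_2\in V_2\text{ solves }\eqref{eqalt_l_der}\}$ is the $\cL$-derivative construction of Proposition~\ref{prop:l_def} applied on $[t_1,t_2]$ with the vertical fibre replaced by $\Lambda$; in particular it is a well-defined continuous map $L(n)\to L(n)$ (it is a finite composition of operations of the type $\Lambda\mapsto\Lambda^\eta$ as in Proposition~\ref{prop:sungle_u}). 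Continuity in $\Lambda$ is what lets you push $\lim_{V_1}$ inside, and then cofinality finishes the job. Stating this explicitly would remove the only soft spot in your write-up.
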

This lemma implies that we can construct $\cL(E_{t},J_{t})[\tilde{u},\lambda]$ by knowing already the same $\cL$-derivative at time $\tau < t$. This is obvious when we have a Jacobi DE, since in this case $\cL(E_{t},J_{t})[\tilde{u},\lambda]$ can be defined using the flow of Jacobi equation.

To include maximum information about the Hessian one should take $L^\infty([0,T],\hat{U})$ as the space of admissible variations, but for simplicity and conceptional clarity we will take a smaller space $\cU \subset L^\infty([0,T],\hat{U})$ defined in the following way. If on some interval $\tilde{u}(\tau)$ takes values on the boundary $\p U$, then we only use time variations. Otherwise we use variations of the control $u$. Therefore we always use \textit{only} one-dimensional variations. We define Jacobi curves as $\cL_t = \cL(E_{t},J_{t})[\tilde{u},\lambda](\cU\cap L^\infty([0,t],\hat{U}))$. 

In this paper we make the following assumption on the regularity of our system and extremal controls:

\begin{assumption}
Functions $b(t)$, $X(t)$ and the extremal control $\tilde{u}(t)$ are piece-wise analytic as functions of $t$.
\end{assumption}
\begin{assumption}
We assume that the extremal control $\tilde{u}(t)$ takes values either on a vertex or on an edge of the polytope $U$.
\end{assumption}
\begin{assumption}
When extremal control $\tilde{u}(t)$ is on a vertex, we only use time variations, when $\tilde{u}(t)$ is on a an edge, we only use variations along that edge.
\end{assumption}

\begin{remark}

These assumptions are not as restrictives as it might seem. Time variations have no effect if the extremal control $\tilde{u}(\tau)$ is smooth, because in this situation any time variation $\alpha$ can be realised as a variation of the control parameter $u$~\cite{agrachev_moi}. Therefore the effect of time variations is concentrated at discontinuities of the reference control $\tilde{u}(\tau)$. Under the piecewise analyticity assumption we can have discontinuities only at isolated points. The constructed Jacobi curve will be weaker then the one constructed using all possible variations. But even in this case we can obtain useful optimality conditions, and it is possible to generalize this argument to other situations by simply considering any subspace of one-dimensional two-sided variations.  
\end{remark}

In~\cite{A_and_me} by using Lemma~\ref{lemm:add} we gave an iterative algorithm for the construction of the Jacobi curve $\cL_t$. The idea was to split the time interval $[0,T]$ into small pieces and on each piece to use the space of constant functions as $V$. This way one obtains an approximation of the Jacobi curve which by Theorem~\ref{thm:main} converges point-wise to it, when the length of the biggest interval of the splitting goes to zero. This algorithm becomes particularly nice if we have a single control parameter, like in the problem that we consider in this article. In this case $b(\tau)$ is just a function and $X(\tau)$ is a $\R^{2n}$-valued vector function.

\begin{proposition}
\label{prop:sungle_u}
Consider a single control parameter system. Given a $\cL$-derivative $\cL_t(V)$, where $V$ is some space of variations defined on $[0,t]$, we have $\cL_t(V \oplus \R \chi_{[t,t+\varepsilon]}) = \cL_t(V)^{\eta(t+\varepsilon)}$, where $\chi_{[t,t+\varepsilon]}$ is the characteristic function of the corresponding interval and $\eta(t+\varepsilon)$ is determined by one of the two alternatives
\begin{enumerate}
\item If 
\begin{equation*}
\int_t^{t+\varepsilon}X(\tau) d\tau \in \cL_t(V)
\end{equation*}
then $\cL_t(V \oplus \R \chi_{[t,t+\varepsilon]}) = \cL_t(V)$ and we can take $\eta(t+\varepsilon)$ to be any vector from $\cL_t(V)$

\item Else we fix any $\eta(t) \in \cL_t(V)$ satisfying
$$
\sigma\left( \eta(t), \int_t^{t+\varepsilon} X(\tau) d\tau \right) \neq 0
$$
and take
$$
\eta(t+\varepsilon) = K \eta(t) + \frac{1}{\varepsilon}\int_t^{t+\varepsilon} X(\tau) d\tau
$$
where 
$$
K = -\frac{\dfrac{1}{\varepsilon}\displaystyle\int_t^{t+\varepsilon}\left[\sigma\left( \int_t^\tau X(\theta) d\theta, X(\tau) \right)+b(\tau) \right]d\tau}{\sigma\left( \eta(t), \displaystyle\int_t^{t+\varepsilon} X(\tau) d\tau \right) }.
$$
\end{enumerate}
\end{proposition}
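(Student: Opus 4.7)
The plan is to apply Lemma~\ref{lemm:add} with $t_1=t$, $t_2=t+\varepsilon$ and one-dimensional added variation space $V'=\R\chi_{[t,t+\varepsilon]}$. Since $V'$ is one-dimensional and the control is scalar, equation~\eqref{eqalt_l_der} collapses to a single scalar linear relation on the pair $(\lambda,c)\in\cL_t(V)\times\R$, and the candidate new vectors take the form $\eta = \lambda + cI$ with $I := \int_t^{t+\varepsilon}X(\tau)\,d\tau$. After this reduction the proposition reduces to pure linear algebra on the Lagrangian plane $\cL_t(V)$.

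First I would substitute the constant choices $v(\tau)\equiv c$ and $w(\tau)\equiv d$ into \eqref{eqalt_l_der}. Cancelling the arbitrary $d$ and setting
$$
B_\varepsilon := \int_t^{t+\varepsilon}\!\left[\sigma\!\left(\int_t^\tau X(\theta)\,d\theta,\,X(\tau)\right) + b(\tau)\right]d\tau,
$$
the constraint becomes $\sigma(\lambda,I)+cB_\varepsilon = 0$. Then I would split into the two alternatives. In Case (1), $I\in\cL_t(V)$: the Lagrangian property of $\cL_t(V)$ forces $\sigma(\lambda,I)=0$ for every $\lambda\in\cL_t(V)$, so every candidate $\lambda+cI$ already lies in $\cL_t(V)$, no new direction is added, and picking $\eta(t+\varepsilon)$ anywhere inside $\cL_t(V)$ trivially gives $\cL_t(V)^{\eta(t+\varepsilon)}=\cL_t(V)$. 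In Case (2), $I\notin\cL_t(V)$: the linear form $\lambda\mapsto\sigma(\lambda,I)$ is nonzero on $\cL_t(V)$, so one can fix $\eta(t)\in\cL_t(V)$ with $\sigma(\eta(t),I)\neq 0$; the solution set of the constraint decouples as the $(n-1)$-dimensional hyperplane $\{\lambda\in\cL_t(V):\sigma(\lambda,I)=0\}$ (taking $c=0$) plus one transverse direction. Normalizing this direction by $c=1/\varepsilon$ and $\lambda=K\eta(t)$ forces $K\sigma(\eta(t),I)=-B_\varepsilon/\varepsilon$, which is exactly the stated value of $K$ and reproduces the vector $\eta(t+\varepsilon)$ from the statement.

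The last step is to identify this description with $\cL_t(V)^{\eta(t+\varepsilon)}=(\cL_t(V)\cap(\R\eta(t+\varepsilon))^\angle)+\R\eta(t+\varepsilon)$. For $\lambda\in\cL_t(V)$ the Lagrangian property yields $\sigma(\lambda,\eta(t))=0$, hence $\sigma(\lambda,\eta(t+\varepsilon))=\varepsilon^{-1}\sigma(\lambda,I)$, so the skew-orthogonality condition with respect to $\eta(t+\varepsilon)$ is precisely the hyperplane isolated in Case (2); adjoining $\R\eta(t+\varepsilon)$ then recovers the full $n$-dimensional Lagrangian plane. I expect the only delicate point to be bookkeeping rather than calculation: one must remember that, by the remark preceding Proposition~\ref{prop:l_def}, the Jacobi curves are understood after pull-back by $\Phi_t$, so that both $\cL_t(V)$ and the new plane sit in the common symplectic space $T_{\lambda_0}(T^*M)$ and the operation $\Lambda\mapsto\Lambda^\Gamma$ is intrinsic; once this is fixed, the remainder is the direct linear-algebra manipulation sketched above.
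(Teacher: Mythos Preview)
Your argument is correct and is precisely the route the paper indicates: the paper does not reprove Proposition~\ref{prop:sungle_u} here but states it as a specialization, via Lemma~\ref{lemm:add}, of the iterative algorithm developed in the companion article; your derivation carries this specialization out explicitly and cleanly. One small wording point in Case~(1): the constraint $\sigma(\lambda,I)+cB_\varepsilon=0$ reduces to $cB_\varepsilon=0$, so not every pair $(\lambda,c)$ solves it when $B_\varepsilon\neq0$; the conclusion is unchanged because in either subcase ($c=0$ forced, or $c$ free with $I\in\cL_t(V)$) the resulting vectors $\lambda+cI$ remain in $\cL_t(V)$.
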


In~\cite{A_and_me} we have also proven the following useful lemma.
\begin{lemma}
\label{lem:continuity}
The Jacobi curve $\cL_t$ is left continuous.
\end{lemma}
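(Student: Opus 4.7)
My strategy rests on three observations that together bridge the definition of $\cL_t$ as a generalized limit over finite-dimensional variation spaces and left continuity in time.

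First, I will record a compatibility between the $\cL$-derivatives at different times. For $0 < s < t$ and any finite-dimensional $V \subset \cU_s$, view $V$ as a subspace of $\cU_t$ by extending its elements by zero on $(s,t]$. Then
$$
\cL_t(V) = \cL_s(V).
$$
This is immediate from Proposition~\ref{prop:l_def}: whenever $v, w \in V$ are supported in $[0,s]$, the integrands in both \eqref{eql_der_def1} and \eqref{eql_der_def2} vanish identically on $(s,t]$, so the two systems defining $\cL_t(V)$ and $\cL_s(V)$ coincide.

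Second, the subspace $W_t := \bigcup_{s<t}\cU_s \subset \cU_t$ of variations whose support is bounded away from $t$ is dense in $\cU_t$ in the natural $L^2$ topology (truncation approximates any $L^2$ function). Theorem~\ref{thm:main}, applied to the dense subspace $W_t$, therefore gives
$$
\cL_t \;=\; \cL_t(W_t) \;=\; \lim_{V \nearrow W_t} \cL_t(V),
$$
with the generalized limit taken along the directed set of finite-dimensional subspaces $V \subset W_t$.

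Third, I pass from these two facts to left continuity by a diagonal construction. Given an arbitrary sequence $s_n \nearrow t$, I build an increasing sequence of finite-dimensional subspaces $V_n \subset \cU_{s_n}$ with two properties: (a) $\cL_{s_n}(V_n)$ lies within distance $1/n$ of $\cL_{s_n}$ in $L(n)$, which is possible since $\cL_{s_n}$ is itself the generalized limit of $\cL_{s_n}(V)$ over finite-dimensional $V \subset \cU_{s_n}$; and (b) $\bigcup_n V_n$ is dense in $\cU_t$, which is possible by separability of $L^2$ together with the density of $W_t$. Then the compatibility step gives $\cL_t(V_n) = \cL_{s_n}(V_n)$, while Theorem~\ref{thm:main} applied to the dense sequence $\bigcup_n V_n$ yields $\cL_t(V_n) \to \cL_t$ in $L(n)$. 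Combining with (a) forces $\cL_{s_n} \to \cL_t$, proving the lemma.

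The main obstacle is the diagonalization in the third step: one must simultaneously make $V_n$ large enough inside $\cU_{s_n}$ to approximate the full $\cL_{s_n}$, and arrange that the whole sequence is cofinal in a dense subspace of $\cU_t$. This is routine given the separability of the ambient $L^2$ spaces, but the bookkeeping is the delicate point. Everything else reduces to Proposition~\ref{prop:l_def} and Theorem~\ref{thm:main}; the right-continuity/jump behavior advertised in the abstract is consistent with this argument since the symmetric construction from the right would require controlling variations on intervals $[t, s']$ that can produce a nonzero contribution in the limit.
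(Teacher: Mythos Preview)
The paper does not actually prove this lemma; it only states it and cites the companion article~\cite{A_and_me} for the proof. So there is nothing to compare against directly.

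Your argument is essentially correct and is the natural one. The compatibility step is immediate from Proposition~\ref{prop:l_def}, and the density of $W_t$ in $\cU_t$ is clear. The diagonal construction in step three works as you outline, but let me flag the one point that deserves a little more care: to conclude $\cL_t(V_n)\to\cL_t$ from Theorem~\ref{thm:main} you are using that the increasing sequence $\{V_n\}$ is \emph{cofinal} in the directed set of finite-dimensional subspaces of $\bigcup_n V_n$. This is true precisely because the $V_n$ are increasing (any finite-dimensional $W\subset\bigcup_n V_n$ sits inside some $V_N$), and because Theorem~\ref{thm:main} guarantees that the net over finite-dimensional subspaces of the dense set $\bigcup_n V_n$ actually converges. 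You should make both of these explicit. Once that is said, the construction of the $V_n$ satisfying (a) and (b) simultaneously is routine: at stage $n$ take any finite-dimensional $V_0^{(n)}\subset\cU_{s_n}$ witnessing (a), throw in $V_{n-1}$ and the first $n$ elements of a fixed countable dense subset of $W_t$ truncated to $[0,s_n]$, and let $V_n$ be the span; since $\cU_{s_k}\subset\cU_{s_n}$ for $k\le n$ the result still lies in $\cU_{s_n}$, and enlarging $V_0^{(n)}$ only improves (a) by the net-limit definition.
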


At the end of this section we briefly summarize the key-points regarding $\cL$-derivatives.
\begin{enumerate}
\item A $\cL$-derivative is a map that assigns to a critical point of the functional and a subspace of admissible variations a Lagrangian plane. In the case of optimal control problems proposition~\ref{prop:l_def} gives an effective definition;
\item A $\cL$-derivative exists if the restriction of the Hessian at the considered critical point has a finite positive or negative inertia index;
\item The set of all $\cL$-derivatives over all subspaces of variations contains all the information about nullity, positive and negative inertia indices. By comparing relative positions of the corresponding Lagrangian planes one can track how those numbers change as we add variations;
\item $\cL$-derivatives do not change if we replace an infinite-dimensional space of variations by some dense subspace.
\end{enumerate}

Our goal is to specialize the definition of the Jacobi curve $\cL_t$ by precomputing the corresponding generalized limits. This way we are going to obtain simple constructions of $\cL_t$ in several cases. As a result we will recover some already known results and a completely new dynamical systems characterization of the Jacobi curve in the presence of a singularity that would be hard to guess without this general definition. Using this we can construct Jacobi curves of combinations of different extremals.

\section{Jacobi DE under the strengthened generalized Legendre condition}
\label{sec:jacobide}
Let us assume that the control $\tilde{u}(t)$ takes values on a one-dimensional edge of $U$. Our goal is to give a characterization of the Jacobi curve using an ODE, like in the classical theory. As we have discussed in the previous sections we will only use variations of the control parameter $u$ to construct the corresponding Jacobi curve.

We consider a sequence of functions $b^i(\tau)$ that we define as
$$
b^i(\tau) = \begin{cases}
b(\tau), & \text{ if } i=0, \\ 
\sigma\left( X^{(i)}(\tau),X^{(i-1)}(\tau) \right), & \text{ if } i \geq 1.
\end{cases}
$$
For the sake of simplicity we will often drop in the future the explicit dependence on time $\tau$ and simply write $b^i$ or $\sigma\left( X^{(i+1)},X^{(i)} \right)$, when there is no confusion.

The \textit{strengthened Legendre condition of order $m$} is a series of identities of the form
$$
b^m \leq \beta <0, \qquad b^j \equiv 0, \qquad j<m
$$
for some $m\in \Z_{\geq 0}$, where $\beta$ is just a constant. We say that an extremal curve $\tilde{q}(\tau)$ is a \textit{singular curve of order $m$}, if along it the strengthened Legendre condition of order $m$ is satisfied. If along a trajectory $b^i \equiv 0$ for all $i\in \Z_{\geq 0}$, we say that the trajectory has order infinity.

We define the \textit{Goh subspaces} as
$$
\Gamma^i(\tau) = \spn \{X^{(j)}(\tau):j\leq i\}.
$$
\begin{lemma}
\label{lemm:isotropic}
Assume that the strengthened Legendre condition of order $m$ is satisfied along an extremal trajectory $\tilde{q}(\tau)$. Then $\Gamma^{m-1}(\tau)$ is an isotropic subspace. Moreover $X^{m}(\tau) \in \Gamma^{m-2}(\tau)^\angle$.
\end{lemma}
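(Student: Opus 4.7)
The plan is to prove both claims simultaneously by establishing the sharper fact that
\[
a_{i,j}(\tau) \;:=\; \sigma\bigl(X^{(i)}(\tau),\,X^{(j)}(\tau)\bigr) \;\equiv\; 0 \qquad \text{whenever } i+j \leq 2m-2.
\]
Indeed, taking $i,j \leq m-1$ yields isotropy of $\Gamma^{m-1}(\tau)$, while taking $i = m$ together with $j \leq m-2$ yields $X^{(m)}(\tau) \in \Gamma^{m-2}(\tau)^\angle$.

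Three elementary observations drive the argument. First, antisymmetry of $\sigma$ gives $a_{j,i} = -a_{i,j}$, so in particular $a_{i,i} \equiv 0$. Second, since $\sigma$ is a fixed bilinear form on $T_{\lambda(0)}(T^*M)$ and every $X^{(i)}(\tau)$ lives in this same symplectic space, the Leibniz rule yields
\[
\dot a_{i,j} \;=\; a_{i+1,j} + a_{i,j+1}.
\]
Third, the hypothesis $b^j \equiv 0$ for $j=0,1,\dots,m-1$ translates, via the definition $b^j = \sigma(X^{(j)},X^{(j-1)})$, into $a_{j,j-1} \equiv 0$ for $1 \leq j \leq m-1$.

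I would then run an induction on $k = i+j$ from $0$ up to $2m-2$, assuming that $a_{i',j'} \equiv 0$ for all pairs with $i'+j' < k$. Within the antidiagonal $\{(i,j):i+j=k\}$, I first identify a ``seed'' that vanishes a priori: if $k = 2\ell$ is even, the seed is $a_{\ell,\ell} \equiv 0$ from antisymmetry; if $k = 2\ell-1$ is odd, the bound $k \leq 2m-2$ forces $\ell \leq m-1$, so the seed $a_{\ell,\ell-1} = b^\ell \equiv 0$ is supplied by hypothesis. To propagate the vanishing, given $a_{i,j} \equiv 0$, the Leibniz identity applied to $a_{i-1,j}$ --- which is identically zero by the induction hypothesis on level $k-1$ --- gives $0 = \dot a_{i-1,j} = a_{i,j} + a_{i-1,j+1}$, whence $a_{i-1,j+1} \equiv 0$. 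A symmetric step using $\dot a_{i,j-1}$ yields $a_{i+1,j-1} \equiv 0$. Iterating these one-step shifts covers the whole antidiagonal.

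The only real point of care is verifying that the induction terminates exactly at $k = 2m-2$: one step further would require $a_{m,m-1} = b^m$ to vanish, and this is precisely the quantity that is \emph{non}-zero under the strengthened Legendre condition. Thus the bound $2m-2$ is sharp and dictated by the hypothesis, and no analytic input beyond the product rule is needed --- only careful index bookkeeping at the extremes of each antidiagonal.
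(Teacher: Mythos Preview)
Your proof is correct and rests on the same mechanism as the paper's --- repeated application of the Leibniz identity $\dot a_{i,j} = a_{i+1,j} + a_{i,j+1}$ together with the hypothesis $b^1 = \cdots = b^{m-1} = 0$ as seeds --- though you organize the induction along antidiagonals $i+j = k$ rather than on $i$ as the paper does. This reorganization has the mild bonus of establishing the slightly sharper conclusion $\sigma(X^{(i)},X^{(j)}) \equiv 0$ for all $i+j \leq 2m-2$, from which both assertions of the lemma are read off directly.
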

\begin{proof}
The proof is a simple inductive argument. For $i = 1$ the statement is obvious since $\Gamma^1= \R X$. Assume that the statement is true for $i < m-1$. Then in particular we have 
$$
\sigma\left(X^{(i)}, X^{(j)}\right) = 0, \qquad \forall j < i.
$$
Differentiating this identity and using the induction assumption we find that
$$
\sigma\left(X^{(i+1)}, X^{(j)}\right) = 0, \qquad \forall j<i.
$$

The fact that $X^{m}(\tau) \in \Gamma^{m-2}(\tau)^\angle$ now follows from the differentiation of $$\sigma(X^{(m-1)},X^{(j)}) = 0, \qquad \forall j< m-2. $$
\end{proof} 

We now prove the following characterization of the Jacobi curve.
\begin{theorem}
\label{eqjacobi_de}
Let $\tilde q(\tau)$ be a regular or a singular extremal of order $m$. Then $\cL_t$ for $t>0$ is a linear span of $\Gamma^{m-1}(t)$ and the solutions of the following linear ODE
$$
\dot{\mu} = \frac{\sigma(X^{(m)},\mu)}{b^m}X^{(m)},
$$
with boundary conditions $\mu(0) \in T_{\lambda(0)}(T^*_{q_0}M) \cap (\Gamma^{m-1}(0))^\angle $. 

If the trajectory has infinite order, then we can define
$$
\Gamma(\tau) = \bigcup_{i=1}^\infty \Gamma^i(\tau)
$$
and $\cL_t = (T_{\lambda(0)}(T^*_{q_0}M))^{\Gamma(0+)}$.
\end{theorem}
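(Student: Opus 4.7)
My approach is to verify the characterization directly from the variational definition of $\cL_t$ (Proposition~\ref{prop:l_def}, Lemma~\ref{lemm:add}). The claim splits into: (i) $\Gamma^{m-1}(t)\subset\cL_t$ via a Goh-type concentration argument; (ii) every ODE orbit $\mu(\tau)$ yields an element of $\cL_t$ modulo $\Gamma^{m-1}(t)$; and (iii) a dimension count forcing equality.

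For (i) I would argue by induction on $j=0,\ldots,m-1$ that $X^{(j)}(t)\in\cL_t$. The base case applies Proposition~\ref{prop:sungle_u} to concentration variations $\chi_{[s,s+\e]}$ for $s$ slightly less than $t$: because $b\equiv 0$ and $\Gamma^{m-1}$ is isotropic (Lemma~\ref{lemm:isotropic}), the coefficient $K$ there is $O(\e)$, so the only direction surviving as $\e\to 0$ is $\e^{-1}\int_{s}^{s+\e} X\,d\tau\to X(t)$. Iterating with variations that approximate higher derivatives of a bump at $t$, and integrating by parts, adds $X^{(1)}(t),\ldots,X^{(m-1)}(t)$; at each step the identity $b^{j}\equiv 0$ for $j<m$ kills the obstruction to satisfying~\eqref{eql_der_def2}.

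For (ii), given $\mu_0\in T_{\lambda(0)}(T^*_{q_0}M)\cap(\Gamma^{m-1}(0))^\angle$ and the solution $\mu(\tau)$ of the ODE, I would first verify the invariance $\mu(\tau)\in(\Gamma^{m-1}(\tau))^\angle$ for all $\tau$. Setting $S_j(\tau):=\sigma(\mu,X^{(j)})$, the ODE and Lemma~\ref{lemm:isotropic} give $\dot S_{m-1}\equiv 0$ and $\dot S_{j-1}=S_j$ for $j=1,\ldots,m-1$, so the initial vanishing $S_j(0)=0$ propagates. Next I introduce the Goh correction $\gamma(\tau)=\sum_{j=0}^{m-1}c_j(\tau)X^{(j)}(\tau)$ and scalar variation $v(\tau)$ defined by the triangular recursion $c_{m-1}=-\sigma(X^{(m)},\mu)/b^m$, $c_{j-1}=-\dot c_j$, $v=\dot c_0$; collecting coefficients in the basis $\{X,X^{(1)},\ldots,X^{(m)}\}$ one checks this is exactly the condition $\dot\mu+\dot\gamma=X(\tau)v(\tau)$. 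Substituting $\xi(\tau):=\mu(\tau)+\gamma(\tau)$ into~\eqref{eql_der_def2}, the isotropy of $\Gamma^{m-1}$ collapses $\sigma(\xi,X)$ to $S_0\equiv 0$, and $b\,v\,w\equiv 0$ (or, in the regular case $m=0$, $b\,v\,w$ cancels the $\sigma(\mu,X)w$ term directly via $v=\sigma(X,\mu)/b$), so~\eqref{eql_der_def2} is satisfied pointwise.

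The delicate point — the main obstacle — is that $\gamma(0)$ from the recursion need not lie in $T_{\lambda(0)}(T^*_{q_0}M)$, so $\xi(0)=\mu(0)+\gamma(0)$ is not directly admissible as the initial value $\eta_0$ in~\eqref{eql_der_def1}. I would handle this via Lemma~\ref{lemm:add}: an initial burst of Goh variations on $[0,\e]$ produces an intermediate Lagrangian already containing $\gamma(0)$, after which the recursion above on $[\e,t]$ is admissible. Taking $\e\to 0$ and invoking left continuity (Lemma~\ref{lem:continuity}) together with Theorem~\ref{thm:main} identifies the limit with $\cL_t$. Dimension count finishes (iii): the ODE image has dimension $n-\dim\Gamma^{m-1}(0)$, complementary to $\dim\Gamma^{m-1}(t)$, summing to the Lagrangian dimension $n$. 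In the infinite-order case the same Goh argument adds every $X^{(i)}(t)$ and $\Gamma(t+)\subset\cL_t$; the remaining directions are stationary orbits $\mu(\tau)\equiv\mu_0\in T_{\lambda(0)}(T^*_{q_0}M)\cap\Gamma(0+)^\angle$, yielding $\cL_t=(T_{\lambda(0)}(T^*_{q_0}M))^{\Gamma(0+)}$.
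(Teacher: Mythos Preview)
Your approach is correct and is the constructive dual of the paper's argument. The paper does not build variations term by term; instead it integrates the defining identity~\eqref{eq:gen-for} by parts $m$ times, using Lemma~\ref{lemm:isotropic} at each stage to kill the cross terms, and observes that the resulting quadratic form and the representation of $\eta(t)$ depend on $v$ only through the $m$-th primitive $P^mv$ and the endpoint values $P^{i}v(t)$. Continuity in the weak norm $\|v\|_{-m}^2=\sum_i (P^iv(t))^2+\|P^mv\|_{L^2}^2$ then lets one extend to $\hat H^{-m}$ via Theorem~\ref{thm:main}; in that completion the endpoint values $P^iv(t)$ are genuinely free parameters, which gives $\Gamma^{m-1}(t)\subset\cL_t$ in one line, and solving pointwise for $P^mv$ produces the ODE and the boundary condition simultaneously.

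Your recursion $c_{m-1}=-\sigma(X^{(m)},\mu)/b^m$, $c_{j-1}=-\dot c_j$, $v=\dot c_0$ is exactly the inverse of that integration by parts (your $c_j$ are the paper's $(-1)^{j+1}P^{j+1}v$ up to integration constants), and the ``delicate point'' you flag --- that $\gamma(0)\notin T_{\lambda(0)}(T^*_{q_0}M)$ --- is precisely the discrepancy between your $c_j(0)$ and the paper's $P^{j+1}v(0)=0$. Your burst fix is sound: the concentration argument of part~(i) applied near $\tau=0$ replaces $\Pi=T_{\lambda(0)}(T^*_{q_0}M)$ by $\Pi^{\Gamma^{m-1}(0)}$, which contains both $\mu(0)\in\Pi\cap(\Gamma^{m-1}(0))^\angle$ and $\gamma(0)\in\Gamma^{m-1}(0)$, so $\xi(0)$ becomes admissible after the burst and Lemma~\ref{lemm:add} closes the loop. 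What the paper's packaging buys is economy: the single extension to $\hat H^{-m}$ handles part~(i), the ODE derivation, and your initial-condition obstacle in one stroke, whereas your route needs a separate limit argument at each of these steps (and your iteration for $X^{(j)}(t)$ with $j\ge1$ in part~(i), while correct, would require essentially the same approximation machinery to make fully rigorous). Your version, on the other hand, makes the actual variations explicit and shows more transparently why the boundary space is $\Pi\cap(\Gamma^{m-1}(0))^\angle$ rather than all of $\Pi$.
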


\begin{proof}
The proof is based on the technique called the Goh transformations. The idea of that technique is that if an extremal is singular, then the differentials of maps and quadratic forms in the definition of the $\cL$-derivative remain continuous in a much weaker topology. So we can extend this map by continuity to a bigger space, in which the original space is dense. From the Theorem~\ref{thm:main} we know that this will not change the $\cL$-derivative. 

Let us assume first that the extremal trajectory is regular. If a vector
$$
\eta(t) = \eta + \int_0^t X(\tau) v(\tau) d\tau, \qquad \eta \in \cL_0
$$
is in $\cL_t$, then it satisfies
\begin{equation}
\label{eq:gen-for}
\int_0^t \sigma \left( \eta + \int_0^\tau X(\theta) v(\theta) d\theta, X(\tau) w(\tau) \right) + b(\tau)v(\tau)w(\tau)d\tau  = 0, \qquad \forall w\in L^2[0,t]
\end{equation}
From (\ref{eq:gen-for}) we get that $v(\tau)$ must satisfy 
$$
\sigma (\eta(\tau), X(\tau)) + b(\tau)v(\tau) =0 \iff v(\tau) = -b(\tau)^{-1}\sigma(\eta(\tau),X(\tau)), \qquad \text{a.e.} \tau \in [0,t]
$$
But on the other hand from the definition of $\eta(\tau)$ we have
$$
\dot{\eta}(\tau) = X(\tau) v(\tau) \qquad \Rightarrow \qquad \dot{\eta}(\tau) = - X(\tau) b(\tau)^{-1}\sigma(\eta(\tau),X(\tau))
$$
which gives us the classical Jacobi equation~\cite{as}.

We assume now that the extremal is singular of order $m$. It is clear that this derivation is not going to work anymore, since $b(\tau)  \equiv 0$, so we modify it in the following way. We denote 
$$
P^m v(t) = \int_0^t \int_0^{\tau_1}... \int_0^{\tau_{m-1}}v(\tau_m)d\tau_m ... d\tau_1, \qquad P^m w(t) = \int_0^t \int_0^{\tau_1}... \int_0^{\tau_{m-1}}w(\tau_m)d\tau_m ... d\tau_1
$$
the $m$-th primitives of $v$ and $m$. We integrate by parts $m$ times the first summand of (\ref{eq:gen-for})
$$
\sigma\left( \eta, \int_0^t X(\tau) w(\tau) d\tau \right) = \sigma \left( \eta, \sum_{i=0}^{m-1} (-1)^{i} X^{(i)}(t)(P^{i+1} w(t)) +(-1)^m \int_0^t X^{(m)}(\tau) (P^m w(\tau)) d\tau  \right)
$$
Now we integrate by parts the other summand of (\ref{eq:gen-for}). Exchanging the order of integration, using  Lemma~\ref{lemm:isotropic} and the assumption on the order of our extremal curve:
\begin{align*}
&\int_0^t \sigma \left( \int_0^\tau X(\theta) v(\theta) d\theta, X(\tau) w(\tau) \right) d\tau = \textrm{(integration by parts)} =\\
=&\int_0^t \sigma\left( X(\tau) (Pv(\tau)) - \int_0^\tau \dot{X}(\theta) (Pv(\theta)) d\theta, X(\tau) w(\tau) \right) d\tau = \textrm{(exchanging order)} =\\
=& \int_0^t \sigma\left(  - \dot{X}(\tau) (Pv(\tau)), \int_\tau^t X(\theta) w(\theta)  d\theta \right) d\tau = \textrm{(integration by parts)} = \\
= &\int_0^t \sigma\left(  - \dot{X}(\tau) (P v(\tau)), X(t) (P w(t)) - X(\tau) (Pw(\tau)) - \int_\tau^t \dot X(\theta) (Pw(\theta))  d\theta \right) d\tau = \textrm{(assumption)} = \\
= & \sigma\left(  - \int_0^t \dot{X}(\tau) (P v(\tau)) d\tau, X(t) (P w(t))\right)  + \int_0^t \sigma\left(  \int_0^\tau \dot{X}(\theta) (P v(\theta))  d\theta, \dot X(\tau) (P w(\tau))  \right) d\tau
\end{align*}
We continue to integrate by parts both summands and use Lemma~\ref{lemm:isotropic}, until the Legendre term $\sigma(X^{(m)}(t),X^{(m-1)}(t))$ will not appear explicitly. At the end we get
\begin{align*}
&\int_0^t \sigma \left( \int_0^\tau X(\theta) v(\theta) d\theta, X(\tau) w(\tau) \right) d\tau = \\
= & \sigma\left(  (-1)^m  \int_0^t X^{(m)}(\tau) (P^m v(\tau)) d\tau, \sum_{i=0}^{m-1} (-1)^{i}X^{(i)}(t) (P^{i+1} w(t))\right) +\\
+ & \int_0^t \sigma \left( X^{(m)}(\tau) (P^mv(\tau)), X^{(m-1)}(\tau) (P^mw(\tau) \right)d\tau +\\
+ &
\int_0^t \sigma\left(  \int_0^\tau X^{(m)}(\theta) (P^mv(\theta))  d\theta, X^{(m)}(\tau) (P^mw(\tau))  \right) d\tau
\end{align*}
Thus (\ref{eq:gen-for}) is transformed into
\begin{align}
&\sum_{i= 0}^{m-1}\sigma \left( \eta + (-1)^m \int_0^t X^{(m)}(\tau)(P^mv(\tau)) d \tau, (-1)^i X^{(i)}(t)(P^{i+1}w(t))  \right) +\nonumber\\
 &+\int_0^t \sigma \left( (-1)^m \eta + \int_0^\tau X^{(m)}(\theta) (P^m v(\theta)) d\theta, X^{(m)}(\tau) (P^m w(\tau) \right) d \tau + \label{eq:gen-for-k}\\
 &+ \int_0^t \sigma \left( X^{(m)}(\tau) (P^m v(\tau)), X^{(m-1)}(\tau) (P^mw(\tau)) \right) d\tau = 0 \nonumber
\end{align}

 We also integrate by parts the integral representation of $\eta(t)$, to get
$$
\eta(t) = \eta + \sum_{i=0}^{m-1} (-1)^{i}X^{i}(t) (P^{i+1} v(t)) + (-1)^m \int_0^t X^{(m)}(\tau)(P^mv(\tau)) d\tau
$$

We can see that the right hand side of this expression and quadratic form in  (\ref{eq:gen-for-k}) are continuous in the topology $\hat{H}^{-m}[0,t]$ given by the norm
$$
||v||_{-m} = \sqrt{\sum_{i=0}^{m-1} (P^i v(t))^2 + ||P^m v||^2_{L^2}}.
$$
So we extend by continuity on $\hat{H}^{-m}[0,t]$.
It is important to note that in the $\hat{H}^{-m}[0,t]$ the end-points $P^iv(t)$ represent separate variables. This implies immediately that $\Gamma^{m-1}(t) \subset \cL_t$. Indeed, we can see that the right-hand side of (\ref{eq:gen-for-k}) does not depend on $P^i v(t)$ at all. So if we take $\eta = 0$ and $P^m v(\tau) \equiv 0$, then (\ref{eq:gen-for-k}) will be satisfied automatically. But then $\eta(t) \in \Gamma^{m-1}(t)$ and every vector of $\Gamma^{m-1}(t)$ can be realized this way.

This means that $\cL_t$ actually consists of vectors
$$
\mu(t) = \eta + (-1)^m \int_0^t X^{(m)}(\tau)(P^m v(\tau)) d\tau
$$
and vectors from $\Gamma^{m-1}(t)$.

The derivative of $\mu(\tau)$ is given by
$$
\dot{\mu}(\tau) = (-1)^m X^{(m)}(\tau)(P^mv(\tau)).
$$
We only need to find $P^m v(\tau)$. We do this by solving (\ref{eq:gen-for-k}), which gives us a system of equations
\begin{align}
&\sigma \left( \mu(t),  X^{(i)}(t)\right) = 0, \qquad 0\leq i \leq m-1, \label{eq:boundary} \\
&(-1)^m\sigma \left( \mu(\tau), X^{(m)}(\tau)\right) +  b^m(\tau) P^mv(\tau) = 0, \qquad \text{a.e. } \tau \in [0,t].
\end{align}
Then from the last equation we recover
$$
P^mv(\tau) = (-1)^{(m+1)} (b^m(\tau))^{-1}\sigma \left( \mu(\tau), X^{(m)}(\tau)\right) 
$$
and so
$$
\dot{\mu}(\tau) = - X^{(m)}(\tau) (b^m(\tau))^{-1}\sigma \left( \mu(\tau), X^{(m)}(\tau) \right).
$$
From (\ref{eq:boundary}) we recover boundary conditions
\begin{equation}
\label{eq:boun-resol}
\mu(t) \in \Gamma^{m-1}(t)^\angle.
\end{equation}

We can prove that this identity is true not only for the chosen time $t$, but for any $\tau\in [0,t]$. Indeed, from the explicit form of the Jacobi DE we can see that $X^{(m-1)}(\tau)$ is a particular solution. But since all solutions lie in $\cL_\tau$ we have $\sigma(\mu(\tau), X^{(m-1)}(\tau)) = 0$ for any solution $\mu(\tau)$. Assume that the same is true for $X^{(i)}(\tau)$, $i\leq m-1$. Then for $X^{(i-1)}(\tau) $ we have
$$
\frac{d}{d\tau}\sigma\left( \mu(\tau),  X^{(i-1)}(\tau)\right) =- \frac{\sigma \left( X^{(m)}(\tau),\mu(\tau) \right)}{b^m(\tau)}\sigma \left( X^{(m)}(\tau), X^{(i-1)}(\tau) \right) + \sigma\left( \mu(\tau),X^{(i)}(\tau) \right) \equiv 0
$$
by Lemma~\ref{lemm:isotropic} and the induction assumption.

So we see that $\mu(\tau) \in \Gamma^{m-1}(\tau)^\angle$ is satisfied automatically if $\mu(0+) = (T_{\lambda(0)}(T^*_{q_0}M))^{\Gamma^{m-1}(0)}$. It means that by fixing the appropriate boundary conditions all $n$ independent solutions will lie in $\cL_t$.

Let us now look at what can happen if the singularity is of an infinite order. Since we have already established that $\Gamma(\tau)$ is an isotropic subspace, its dimension is limited. This can happen only if higher derivatives of $X$ become dependent from the lower derivatives. Let us assume that the first $l$ derivatives are generically independent and the $(l+1)$-th is not. Then $\Gamma^l(\tau)$ is a fixed subspace. Indeed, we can represent $\Gamma^l(\tau)$ as an element of $\wedge^l \R^{2n}$
$$
\Gamma^l(\tau) = X(\tau) \wedge \dot{X}(\tau) \wedge ... \wedge X^{(l)}(\tau).
$$
Then $\dot{\Gamma}^l(\tau) = \kappa(\tau) \Gamma^l(\tau)$ for some function $\kappa(\tau)$. Since $\wedge^l \R^{2n}$ is a linear space, the solution of this equation is simply
$$
\Gamma^l(\tau) = e^{\int_s^\tau \kappa(\theta) d\theta} \Gamma^l(s).
$$
So we see that $\Gamma(\tau) =\Gamma$ is constant except maybe a finite number of points, where the first $l$ derivatives of $X$ can become dependent. Therefore $\Gamma$ can be taken to be equal to $\Gamma(0+)$. 

By assumption on the infinite order we have that the elements of $\cL_t$ must satisfy
$$
\int_0^t \sigma \left( \eta + \int_0^\tau X^{(l)}(\theta) P^l v(\theta), X^{(l)}(\tau) w(\tau) \right)d\tau = 0.
$$   
We look for a solution with $P^l v(\tau) = 0$, then the equation above is transformed to
$$
\sigma \left( \eta ,\int_0^t  X^{(l)}(\tau) w(\tau) d\tau \right)= 0.
$$   
But we have seen that $X^{(l)}(\tau)$ never leaves $\Gamma$. Therefore all $\eta \in (T_{\lambda(0)}(T_{q_0}^*M))\cap\Gamma^\angle$ satisfy the equation above as well as the boundary conditions \eqref{eq:boundary}. And as a consequence those vectors together with vectors from $\Gamma(\tau)$ give $n$ independent solutions whenever $\dim(\Gamma(\tau))$ is maximal. At the isolated points where the dimension of this space drops we simply use the left-continuity property from  Lemma~\ref{lem:continuity}.
\end{proof}

\section{Bang-bang extremals}
\label{sec:bang}
If the extremal curve is bang-bang, then the extremal control takes values on the vertices of $U$. In this case we use time variations to construct the Jacobi curve. 

Let 
$$
f(q,\tilde{u}(t)) =: f_i(q), \qquad L(q,\tilde{u}(t)) =: L_i(q)\qquad t\in [t_i,t_{i+1}].
$$
be the controled system and the minimized functional on an interval of constancy $[t_i,t_{i+1}]$ of $u(t)$. Since the system under consideration was autonomous we have that $X(\tau)$ is also a piece-wise constant function. We define $X_i = X(\tau)$, $\tau\in (t_i,t_{i+1}]$. These $X_i$ have a particularly nice form, when we consider an optimal time problem. In this case 
$$
(P^{\tau}_0)^{-1}_* f(q,\tilde{u}\tau) = 
e^{-t_1 f_1}_*(e^{(t_1 - t_2)f_2}_*(...(e^{(t_{i+1} - t_{i})f_{i-1}}_*f_i)...)), \qquad \tau \in (t_i,t_{i+1}]
$$
Let $h_i(\lambda) = \langle \lambda, (P^\tau)^{-1}_* f(q,\tilde{u}(\tau))\rangle$, $t\in(t_i,t_{i+1}]$. Then $X(t) = X_i =  \vec{h}_i$ for $t\in(t_i,t_{i+1}]$.

We can now apply Proposition~\ref{prop:sungle_u} to find an approximation of the Jacobi curve. We take $V_j$ to be the space of variations constant on the intervals $[t_i,t_{i+1}]$ and which are zero for $t\geq t_j$. We have $\cL_t(\{0\}) = T_{\lambda(0)}(T^*_{q_0} M)$ and it is possible now to apply inductively Proposition~\ref{prop:sungle_u}.

Since the new system and the functional are linear in control, we obtain $b(\tau) \equiv 0$. Then 
$$
\sigma\left( \int_{t_i}^\tau X(\theta)d\theta, X(\tau) \right) + b(\tau) = (\tau - t_i)\sigma(X_i,X_i) = 0, \qquad \forall \tau \in [t_i,t_{i+1})
$$
and so $K = 0$ on each step. Similarly we have
$$
\eta(t_{i+1}) = \frac{1}{t_{i+1}-t_i} \int_{t_i}^{t_{i+1}} X(\tau) d\tau = X_i.
$$
This way we obtain a sequence of Lagrangian subspace $\cL_t(V_j)$, defined inductively as
$$
\cL_t(V_0) = \cL_t(\{0\}) = T_{\lambda(0)}(T^*_{q_0} M), \qquad \cL_t(V_{i+1}) = \cL_t(V_i)^{X_i}.
$$
If we take a finer splitting of the interval the corresponding approximation to the Jacobi curve is the same as above, because $(\cL_t(V_i)^{X_i})^{X_i} = \cL_t(V_i)^{X_i}$. 

The final algorithm of constructing the Jacobi curve goes as follows. One defines $\cL_0 = T_{\lambda(0)}(T^*_{q_0}M)$. The Jacobi curves $\cL_\tau$ is constant for $\tau\in(t_i,t_{i+1}]$ and after a switching it jumps to $\cL(t_{i} +) = \cL(t_i)^{X_{i}}$. This is the same algorithm that was obtained in~\cite{agrachev_bang}.

We note that Lemma~\ref{lemm:add} allows to generalize the previous discussion to a wider range of situations. For example, using the results of the previous two sections we can treat the Fuller phenomena, at least when bang-bang arcs are followed by a non-degenerate singular arc. We know from the bang-bang algorithm that in order to construct the Jacobi curve we have to put $X(\tau)$ at each switching time inside of the $\cL$-derivative. Left-continuity ensures that this procedure will give a convergent sequence if the index of the Hessian is finite. After the limit $\Lambda$ of the corresponding planes has been found, we can define the Jacobi curve using the flow of the Jacobi DE with the right boundary conditions. From Lemma~\ref{lemm:add} it follows that we have to replace in Theorem~\ref{eqjacobi_de} $T_{\lambda(0)}(T^*_{q_0}M)$ with $\Lambda$.

\section{Normal form for the Jacobi DE for the simplest singularity}
\label{sec:normal_form}
In this section we consider the simplest singularity when $b(\tau) = 0$ for some moment of time $\tau$. Due to analyticity assumption such a moment of time must be isolated. We would like to construct the Jacobi curve after we pass the singularity. In the following sections we are going to prove the following result.
\begin{theorem}
\label{thm:jump}
Let $\tilde{q}(\tau)$ be an extremal curve for which $b(\tau) = 0$ and $b(\tau+\varepsilon)=b_m\varepsilon^m + ...$ is negative for all $\varepsilon>0$ sufficiently small. Assume that at $\tau$ the following conditions are satisfied 
\begin{enumerate}
\item $\sigma(X(\tau),\dot{X}(\tau))\neq 0$ and $4\sigma(X(\tau),\dot{X}(\tau))+b_2 \neq 0 $ if $m=2$;
\item $\dim\spn\{\ddot X(s), \dot X(s), X(s)\} = const$ for $s\in[\tau,\tau +\varepsilon]$.
\end{enumerate}
Then if the right limit of the Jacobi curve $\cL(s)$ at $s=\tau$ exists, it is equal to
\begin{equation}
\label{eqboundary_thm}
\cL(\tau+) = \cL_\tau^{X(\tau)}.
\end{equation}
\end{theorem}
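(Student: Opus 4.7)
The plan is to use Lemma \ref{lemm:add} to express $\cL_{\tau+\varepsilon}$ as a generalized limit of L-derivatives over spaces of variations supported on $[\tau,\tau+\varepsilon]$, starting from the already-known plane $\cL_\tau$. The simplest such approximation, using the single characteristic function $\chi_{[\tau,\tau+\varepsilon]}$, is handled by Proposition \ref{prop:sungle_u} and equals $\cL_\tau^{\eta(\tau+\varepsilon)}$, where $\eta(\tau+\varepsilon)$ is the vector defined there. The strategy is to show $\eta(\tau+\varepsilon)\to X(\tau)$ modulo $\cL_\tau$ as $\varepsilon\to 0^+$, so that $\cL_\tau^{\eta(\tau+\varepsilon)}\to\cL_\tau^{X(\tau)}$, and then argue that this one-step value coincides with the assumed right limit $\cL(\tau+)$.

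First I would Taylor-expand at $\tau$. Writing $X:=X(\tau)$, $\dot X:=\dot X(\tau)$, and using $b(\tau+s)=b_m s^m+O(s^{m+1})$ together with $X(\tau+s)=X+s\dot X+O(s^2)$, a direct computation gives
\begin{equation*}
\tfrac{1}{\varepsilon}\int_{\tau}^{\tau+\varepsilon}\!\!\Big[\sigma\Big(\!\int_{\tau}^{s}\! X(\theta)\,d\theta,X(s)\Big)+b(s)\Big]\,ds=\tfrac{\varepsilon^{2}}{6}\sigma(X,\dot X)+\tfrac{b_m\varepsilon^{m}}{m+1}+o(\varepsilon^{2}),
\end{equation*}
and $\tfrac{1}{\varepsilon}\int_\tau^{\tau+\varepsilon}X(s)\,ds=X+O(\varepsilon)$. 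If $X(\tau)\in\cL_\tau$ then $\cL_\tau^{X(\tau)}=\cL_\tau$ and case (1) of Proposition \ref{prop:sungle_u} finishes the argument; otherwise I pick $\eta(\tau)\in\cL_\tau$ with $\sigma(\eta(\tau),X)\neq 0$, so the denominator $\sigma(\eta(\tau),\int_\tau^{\tau+\varepsilon}X\,d\theta)=\varepsilon\sigma(\eta(\tau),X)+O(\varepsilon^{2})$ is of order $\varepsilon$. Hence $K=O(\varepsilon)$ and $\eta(\tau+\varepsilon)=K\eta(\tau)+X+O(\varepsilon)\to X(\tau)$. By continuity of $v\mapsto\cL_\tau^{v}$ at any $v\notin\cL_\tau$, this already yields $\cL_\tau^{\eta(\tau+\varepsilon)}\to\cL_\tau^{X(\tau)}$.

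Next I would upgrade this one-step approximation to the true Jacobi curve by iterating Proposition \ref{prop:sungle_u} on a partition $\tau=s_0<s_1<\cdots<s_N=\tau+\varepsilon$ and tracking the iterates $\Lambda_0=\cL_\tau$, $\Lambda_i=\Lambda_{i-1}^{\eta(s_i)}$. Using the hypothesis that $\dim\spn\{\ddot X,\dot X,X\}$ is constant in a neighborhood of $\tau$ — which, via the normal form constructed in Section~\ref{sec:normal_form}, confines the singular dynamics to an invariant symplectic subspace of dimension at most four — I would show by induction on $i$ that each $\eta(s_i)$ stays of the form $X(\tau)+(\text{element of }\cL_\tau)+o(1)$ as the mesh shrinks, so that the iterates $\Lambda_i$ stabilize at $\cL_\tau^{X(\tau)}$.

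The main obstacle is the rigorous interchange of the partition-refinement limit and the $\varepsilon\to 0^+$ limit that would identify this common value with $\cL(\tau+)$. The hypotheses $\sigma(X(\tau),\dot X(\tau))\neq 0$ and, for $m=2$, $4\sigma(X,\dot X)+b_2\neq 0$ are exactly the non-degeneracy conditions needed here: they prevent the denominator in Proposition \ref{prop:sungle_u} from vanishing along the iteration and, read as conditions on the indicial equation of the singular Jacobi ODE in the reduced subspace, rule out resonant roots that would introduce logarithmic terms and destroy convergence of the refinement scheme. Combined with the assumed existence of $\cL(\tau+)$, which forbids unbounded behavior, these guarantee that the two limits commute, yielding $\cL(\tau+)=\cL_\tau^{X(\tau)}$ as claimed in \eqref{eqboundary_thm}.
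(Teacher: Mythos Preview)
Your one-step computation is correct: with a single variation $\chi_{[\tau,\tau+\varepsilon]}$ Proposition~\ref{prop:sungle_u} indeed gives $\cL_\tau^{\eta(\tau+\varepsilon)}$, and your Taylor expansion shows $\eta(\tau+\varepsilon)\to X(\tau)$, hence $\cL_\tau^{\eta(\tau+\varepsilon)}\to\cL_\tau^{X(\tau)}$. The gap is everything after that. The one-step value is only $\cL(V\oplus\R\chi_{[\tau,\tau+\varepsilon]})$ for the \emph{coarsest} partition; it is not $\cL_{\tau+\varepsilon}$. To reach $\cL_{\tau+\varepsilon}$ you must refine the partition and pass to the generalized limit, and the inductive claim that ``each $\eta(s_i)$ stays of the form $X(\tau)+(\text{element of }\cL_\tau)+o(1)$'' is not established. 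After the first step $\Lambda_1=\cL_\tau^{\eta(s_1)}$ contains $\eta(s_1)$, which is \emph{close to} $X(\tau)$; consequently in the next step the denominator $\sigma(\eta,\int_{s_1}^{s_2}X)$ with $\eta\in\Lambda_1$ is itself small, and the constant $K$ need not stay $O(\varepsilon)$. Controlling this balance over an unbounded number of iterations as the mesh shrinks is precisely the hard analytic content of the theorem, and your sketch does not supply it. The final paragraph acknowledges that the interchange of the mesh-refinement limit with $\varepsilon\to0^+$ is ``the main obstacle'' and then asserts that the hypotheses ``rule out resonant roots'' and make the limits commute; but no argument is given, and the allusion to indicial roots already presupposes an ODE analysis that you have not carried out.

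The paper takes a different route that avoids the discrete iteration entirely. It first uses the normal form of Section~\ref{sec:normal_form} to reduce to a singular linear Hamiltonian system in dimension at most four, and then characterizes $\cL_{\tau+\varepsilon}$ as the value at $\tau+\varepsilon$ of the flow of the \emph{continuous} Jacobi ODE started from $\cL_\tau$ at time $\varepsilon$, i.e.\ $\cL_{\tau+\varepsilon}=\lim_{\delta\to0^+}\Phi(\tau+\varepsilon)\Phi(\delta)^{-1}\cL_\tau$. The right limit $\cL(\tau+)$ is then computed by analyzing this ODE near the regular singular point: for $m=1$ via the standard fundamental matrix $P(\tau)\tau^{H_{-1}}$; for $m\ge2$ by first solving explicit constant-coefficient model systems (powers for $m=2$, modified Bessel functions for $m>2$) and then squeezing the general $B(\tau),C(\tau)$ between two such models via the Riccati comparison Lemma~\ref{lemm:comparison}. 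The hypotheses $\sigma(X,\dot X)\neq0$ and $4\sigma(X,\dot X)+b_2\neq0$ enter as conditions on the eigenvalues of the blown-up matrix that guarantee hyperbolic (non-oscillating) behaviour of the model, which is what makes the comparison argument close. This machinery is what replaces, and is genuinely needed in place of, the unproven interchange of limits in your proposal.
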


Before we proceed we would like to make some remarks about the statement of the theorem. 
\begin{enumerate}
\item The assumptions we make allow us to have a simplest possible singularity that is in some sense is generic. It is possible to replace those conditions with different ones and to study even more singular cases;
\item The Jacobi curve may not be well defined, due to an infinite inertia index of the Hessian. We will give sufficient conditions for existence and non-existence using oscillation theorems for Hamiltonian systems in Section~\ref{sec:kneser};
\item It is not true that after the singularity the Jacobi curve is determined only by its jump. Indeed, it must satisfy the Jacobi equation, but at the same time the right hand-side of the Jacobi equation is not even continuous. So we do not have neither existence nor uniqueness of solutions and we need more information to isolate the right solution. In Section~\ref{sec:jumpg2} for $m=1,2$ we will prove that Jacobi curve can be uniquely characterized by a one-jet. 
\end{enumerate}

Due to analyticity singular points can not cluster. That is why without any loss of generality from now on we assume that $b(0) = 0$ and that $b(\tau) <0$ for $\tau$ sufficiently small. To give a characterization of the Jacobi curve after a singularity of the considered type, we use once again the theory of $\cL$-derivatives. 

Even in this case we still have a Jacobi equation of the form
$$
\dot{\eta} = 
\frac{\sigma(X,\eta)}{b}X,
$$
and it governs the behaviour of the Jacobi curve away from singularity. In order to understand how to proceed at the singularity, we must recall that by definition  $\cL$-derivatives are constructed by adding more and more variations and in the limit we get pointwise convergence to the Jacobi curve. Assume that we use only variations whose support does not intersect $[0,\varepsilon]$. Then using the same argument as in the previous section we obtain a slightly different Jacobi equation of the form
$$
\dot{\eta} = \begin{cases}
\frac{\sigma(X,\eta)}{b}X, & \text{ if } \tau\notin[0,\varepsilon], \\ 
0, & \text{ if } \tau\in[0,\varepsilon].
\end{cases}
$$
It is equivalent to the following construction. We will denote by $\Lambda^\varepsilon(\tau)$ a solution of the induced Jacobi equation in the Lagrangian Grassmanian. We use the Jacobi flow to determine the Jacobi curve until time $0$. We assume that the left limit exist and is equal to the corresponding $\cL$-derivative $\Lambda^\varepsilon(0) = \cL_0$. Then the flow does nothing for a while, meaning that $\Lambda^\varepsilon(\tau) = \cL_0$ for $\tau \in[0,\varepsilon]$ and then we continue with the flow after the moment of time $\varepsilon$, where the dynamics is non-singular until the next zero of $b(\tau)$. This means that the Jacobi curve is going to be a point-wise limit of solutions of the Jacobi equation on the Lagrangian Grassmanian that satisfy $\Lambda^\varepsilon(\varepsilon) = \cL_0$. Since outside of the singularity we have uniqueness and existence, for each $\varepsilon$ we obtain a unique curve in $L(n)$. The pointwise limit of these curves is the Jacobi curve we seek.

To realize this strategy we first simplify the Jacobi DE and reduce the dimension of the considered problem by separating singular and regular dynamics of the Jacobi equation. Let $J$ be the complex structure associated to the symplectic form $\sigma$. Then we can rewrite the Jacobi DE as
$$
\dot{\eta} = \frac{X X^T J }{b}\eta.
$$
We make a time-dependent change of variables $\mu(\tau) = M^{-1}(\tau) \eta(\tau)$. We get
$$
\dot{\mu} = -M^{-1} \dot{M} \mu + \frac{M^{-1} X X^T J M }{b}\mu.
$$
First we look carefully at the second term. We assume that the matrix $M$ is symplectic. Then $M^{-1} = -JM^T J$ and we obtain
$$
M^{-1}X X^T J M = (M^{-1} X )(M^T J^T X)^T = (M^{-1} X )(J^T J M^T J^T X)^T = (M^{-1} X)(M^{-1} X)^T J.
$$
We also make a choice for the first column of $M$ by assuming 
$$
M^{-1}(\tau)X(\tau) = \begin{pmatrix}
1 \\
0 \\
\vdots\\
0
\end{pmatrix}.
$$
All this implies that we get an equation of the form
\begin{equation}
\label{eq_intermid}
\dot{\mu}= -M^{-1}\dot{M} \mu + \frac{1}{b}
\sbox0{$\begin{matrix}1&0&\cdots &0\\0&0&\cdots &0\\\vdots &\vdots &\ddots &\vdots \\0&0&\cdots &0\end{matrix}$}
\left(
\begin{array}{c|c}
\makebox[\wd0]{\large $0$}&\usebox{0}\\
\hline
  \vphantom{\usebox{0}}\makebox[\wd0]{\large $0$}&\makebox[\wd0]{\large $0$}
\end{array}
\right)
\mu.
\end{equation}
Now we work with the first term $M^{-1}\dot{M}$. Since we choose $M$ to be symplectic, its columns form a Darboux basis. Denote the first $n$ columns by $e_i$ and the last by $f_i$. Then we can write
$$
-M^{-1}\dot{M} = \begin{pmatrix}
\sigma(f,\dot{e}) & \sigma(f,\dot{f}) \\
\sigma(e,\dot{e}) & \sigma(e,\dot{f})
\end{pmatrix} = \begin{pmatrix}
\sigma(f,\dot{e}) & \sigma(f,\dot{f}) \\
\sigma(e,\dot{e}) & -\sigma(f,\dot{e})
\end{pmatrix},
$$
where $\sigma(x,y)$ means a matrix whose elements are $\sigma(x_i,y_j)$, for two $n$-tuples of vectors $x=(x_1,...,x_n)$ and $y=(y_1,...,y_n)$. The last equality follows from the fact that the basis is Darboux, i.e.
$$
\sigma(e,f) = \id_n \Rightarrow \sigma(\dot{e},f) + \sigma(e,\dot f) = 0.
$$

Let us first assume that $n = 1$. Since by assumption of Theorem~\ref{thm:jump} $\sigma(X(0),\dot{X}(0))\neq 0$, we can choose
\begin{equation}
\label{eq:choice1}
e(\tau) = e_1(\tau) = X(\tau), \qquad f(\tau)= f_1(\tau) = \frac{\dot{X}(\tau)}{\sigma(X(\tau),\dot{X}(\tau))},
\end{equation}
The Jacobi DE reduces then to the following normal form
\begin{equation}
\label{eqsing1}
\frac{d}{d\tau}\begin{pmatrix}
\mu_1 \\
\mu_{n+1}
\end{pmatrix} = 
\begin{pmatrix}
0 & \dfrac{\sigma(\dot X,\ddot X)}{\sigma(X,\dot X)^2} + \dfrac{1}{b} \\
\sigma(X,\dot X) & 0
\end{pmatrix}
\begin{pmatrix}
\mu_1 \\
\mu_{n+1}
\end{pmatrix}
\end{equation}

Let us now assume, that $n\geq 2$. We want to separate the singular dynamics from the regular dynamics. If we look at the singular part in \eqref{eq_intermid}, then we see that the only non-zero element is in the first row and $(n+1)$ column. So in the new coordinates we want at least some of the expressions for $\dot{\mu}^i$ for $i\neq 1,n+1$ to be independent of $\mu^1, \mu^{n+1}$. Moreover the assumption, that $M(\tau)$ is symplectic, is going to imply in addition that we will have two invariant symplectic subspaces: a subspace containing $\mu_1,\mu_{n+1}$ coordinates, where the singular dynamics happens and its complement where the dynamics is smooth.

So we look for $e_i$, $f_i$ such that
\begin{equation}
\label{eq:cond1}
\sigma \left(e_i,e_1 \right)  = 0, 
\qquad 
\sigma \left(f_i,e_1 \right)  = 0,
\qquad 
\sigma \left(e_i,f_1 \right)  = 0,
\qquad 
\sigma \left(f_i,f_1 \right)  = 0.
\end{equation}

From the assumption 2) in Theorem~\ref{thm:jump} that we have made it follows that the dimensions of the space $\spn\{\ddot X(\tau), \dot X(\tau), X(\tau)\}$ must be equal either to two or three. In the first case $\ddot{X}(\tau)$ is simply in the span of $X(\tau)$, $\dot{X}(\tau)$ for small $\tau\geq 0$. So we make the same choice (\ref{eq:choice1}) for $e_1, f_1$ and the rest of the columns we take to be a smooth Darboux basis for the symplectic space $\spn\{X(\tau),\dot X(\tau)\}^\angle$. Then the conditions (\ref{eq:cond1}) are indeed satisfied and we obtain exactly the equation (\ref{eqsing1}) for the singular part.

In the second case we can not guarantee that conditions (\ref{eq:cond1}) are satisfied for $i\geq 2$, since now $\ddot{X}(\tau)$ has to be accounted for. To isolate the singular dynamics we choose a Darboux basis $e_1,e_2,f_1,f_2$ as follows. The vectors $e_1,f_1$ are as before, $e_2$ is defined as
$$
e_2  = \ddot{X} - \frac{\sigma(\ddot{X},X)}{\sigma(\dot{X},X)}\dot{X} + \frac{\sigma(\ddot{X},\dot X)}{\sigma(\dot{X},X)}X.
$$
and $f_2$ is chosen to be any vector such that we get a Darboux basis. In this case it just means that
$$
\sigma(X,f_2) = \sigma(\dot X,f_2) = \sigma(\ddot X,f_2) -1 = 0.
$$
The rest of the columns of $M_\tau$ are chosen to be a smooth Darboux basis of the symplectic space $(\spn\{e_1,e_2,f_1,f_2\})^\angle$. Again, the derivatives of $e_1$, $f_1$ are contained in $\spn\{e_1,e_2,f_1,f_2\}$, so the dynamics splits. The singular dynamics takes place in the plane with $(\mu_1,\mu_2,\mu_{n+1},\mu_{n+2})$ coordinates. Thus we get an invariant subsystem of dimension two which describes all the singular dynamics
$$
\frac{d}{dt}\begin{pmatrix}
\mu_1 \\
\mu_2 \\
\mu_{n+1} \\
\mu_{n+2}
\end{pmatrix} = 
\begin{pmatrix}
\sigma(f_1,\dot e_1) & \sigma(f_1,\dot e_2) & \sigma(f_1,\dot{f}_1) + \frac{1}{b} & \sigma(f_1,\dot{f}_2)\\
\sigma(f_2,\dot e_1) & \sigma(f_2,\dot e_2) & \sigma(f_2,\dot{f}_1) & \sigma(f_2,\dot{f}_2)\\
\sigma(e_1,\dot e_1) & \sigma(e_1,\dot e_2) & \sigma(e_1,\dot{f}_1) & \sigma(e_1,\dot{f}_2) \\
\sigma(e_2,\dot e_1) & \sigma(e_2,\dot e_2) & \sigma(e_2,\dot{f}_1) & \sigma(e_2,\dot{f}_2)
\end{pmatrix}
\begin{pmatrix}
\mu_1 \\
\mu_2 \\
\mu_{n+1} \\
\mu_{n+2}
\end{pmatrix}.
$$
and so we have proven our first result about the jump of the Jacobi curve $\cL_\tau$
\begin{proposition}
If $b(\tau) = 0$, then $\dim (\cL_\tau\cap \cL_{\tau+}) \geq n-2$.
\end{proposition}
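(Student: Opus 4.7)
The plan is to use the symplectic splitting $\R^{2n} = V_s \oplus V_r$ just established, where $V_s$ (of dimension $2$ or $4$) carries all the singular dynamics and $V_r$ supports only smooth coefficients. Because the Jacobi flow factors as $\Phi = \Phi_s \oplus \Phi_r$ with $\Phi_r$ continuous through $\tau$, the $V_r$-part of $\cL_\tau$ is preserved across the singularity. Concretely, for $v \in \cL_\tau \cap V_r$, the vector $\Phi_r(\tau,\tau+\varepsilon)v$ lies in $\cL_{\tau+\varepsilon}$ for small $\varepsilon > 0$ (the Jacobi curve is transported by the flow outside the singular instant) and converges to $v$ as $\varepsilon \to 0^+$, so $v \in \cL_{\tau+}$. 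This gives $\cL_\tau \cap V_r \subseteq \cL_\tau \cap \cL_{\tau+}$.

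Next I would compute $\dim(\cL_\tau \cap V_r)$ via the symplectic projection $\pi_r : V_s \oplus V_r \to V_r$. A direct argument using the symplectic orthogonality of the splitting shows that $\pi_r(\cL_\tau)$ is coisotropic in $V_r$ with skew-orthogonal complement in $V_r$ equal to $\cL_\tau \cap V_r$, while rank-nullity gives $\dim \pi_r(\cL_\tau) = n - \dim(\cL_\tau \cap V_s)$. Combining these yields
$$\dim(\cL_\tau \cap V_r) = n - \dim V_s + \dim(\cL_\tau \cap V_s).$$
In the easy case $\dim V_s = 2$, this immediately produces $\dim(\cL_\tau \cap V_r) \geq n - 2$ and the proposition follows.

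For the harder case $\dim V_s = 4$ the $V_r$-intersection alone gives only $n - 4$, and one must exploit the additional rank-one structure of the singular perturbation on $V_s$: only the $(1, n+1)$ entry of $A_s$ carries the $1/b$ term, so the singular operator has image $\R X$ and $3$-dimensional coisotropic kernel $X^\angle \cap V_s$. The cleanest path I see is to run the iterative construction of Proposition~\ref{prop:sungle_u} inside the four-dimensional reduced system and observe that each limit step can only enlarge the Jacobi curve by the one-dimensional direction $X(\tau)$. This accounts for at most a $2$-dimensional loss of common intersection in $V_s$ between $\cL_\tau$ and $\cL_{\tau+}$ and yields the required bound $n - 2$. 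The main obstacle I anticipate is making this last step fully rigorous without appealing to the sharper statement of Theorem~\ref{thm:jump}, which is proved only in Section~\ref{sec:jumpg2}; one must carefully control the limits of the approximating Lagrangian planes intrinsically within $V_s$.
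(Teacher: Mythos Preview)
Your proposal is more careful than what the paper actually offers. The paper's entire ``proof'' of this proposition is the sentence ``and so we have proven our first result about the jump of the Jacobi curve $\cL_\tau$'' immediately following the derivation of the four-dimensional invariant subsystem~\eqref{eq:jacobi_notfull}. The intended logic is simply: the singular dynamics lives in a $4$-dimensional invariant symplectic subspace $V_s$; on the complementary $V_r$ the flow is regular; hence the jump ``can affect at most two dimensions.'' The paper does not address the issue you correctly raise, namely that $\cL_\tau$ need not split as a direct sum of Lagrangians in $V_s$ and $V_r$, so the bound $\dim(\cL_\tau\cap V_r)\ge n-4$ (via your formula $\dim(\cL_\tau\cap V_r)=n-\dim V_s+\dim(\cL_\tau\cap V_s)$, which is correct) is all one obtains directly from the splitting when $\dim V_s=4$.

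Your instinct to exploit the rank-one structure of the singular term to recover the missing two dimensions is the natural route, but as you acknowledge, making it rigorous without essentially redoing the limit analysis of Section~\ref{sec:jumpg2} is delicate. In practice the paper treats this proposition as a heuristic intermediate bound that follows ``morally'' from the normal form; the precise statement is superseded by Theorem~\ref{thm:jump}, whose proof in Section~\ref{sec:jumpg2} computes the jump explicitly as $\cL_{\tau+}=\cL_\tau^{X(\tau)}$ and thereby yields the sharper estimate $\dim(\cL_\tau\cap\cL_{\tau+})\ge n-1$. So you have not missed an argument that the paper supplies here---the paper simply does not supply one.
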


Let us simplify this equation even more. Since $M^{-1}\dot M$ is a matrix from the symplectic Lie algebra, not all the entries above are independent. More precisely, the first and the last diagonal 2x2 minors are minus transpose of each other and the off diagonal 2x2 minors are symmetric. We can find explicitly
$$
\sigma(f_1,\dot{e}_1) = \sigma(f_1,\dot{e}_2) = \sigma(f_2,\dot{e}_1) = \sigma(e_1,\dot{e}_2)  = 0;
$$
$$
\sigma(e_1,\dot{f}_1) = \sigma(e_1,\dot{f}_2) = \sigma(e_2,\dot{f}_1) = \sigma(e_2,\dot{e}_1) = 0;
$$
$$
\sigma(f_2,\dot{f}_1) = \sigma(f_1,\dot{f}_2) = -\frac{1}{\sigma(X,\dot X)};
$$
$$
\sigma(f_2,\dot e_2) = - \sigma(e_2,\dot f_2) = \sigma(f_2,\dddot{X}) + \frac{\sigma(\ddot X, X)}{\sigma(\dot X, X)};
$$
$$
\sigma(e_1,\dot{e}_1) = \sigma(X,\dot X);
$$
$$
\sigma(e_2,\dot{e}_2) = \sigma(\ddot X,\dddot X) - \frac{\sigma(\dddot X,X)\sigma(\ddot X,\dot X)+\sigma(\ddot X,X)\sigma(\dddot X,\dot{X})}{\sigma(\dot X,X)};
$$
$$
\sigma(f_1,\dot{f}_1) = \dfrac{\sigma(\dot X,\ddot X)}{\sigma(X,\dot X)^2}.
$$
So we get an equation of the form
\begin{equation}
\label{eq:jacobi_notfull}
\frac{d}{dt}\begin{pmatrix}
\mu_1 \\
\mu_2 \\
\mu_{n+1} \\
\mu_{n+2}
\end{pmatrix} = 
\begin{pmatrix}
0 & 0 & \sigma(f_1,\dot{f}_1) + \frac{1}{b} & \sigma(f_2,\dot{f}_1)\\
0 & \sigma(f_2,\dot e_2) & \sigma(f_2,\dot{f}_1) & \sigma(f_2,\dot{f}_2)\\
\sigma(e_1,\dot e_1) & 0 & 0 & 0 \\
0 & \sigma(e_2,\dot e_2) & 0 & -\sigma(f_2,\dot{e}_2)
\end{pmatrix}
\begin{pmatrix}
\mu_1 \\
\mu_2 \\
\mu_{n+1} \\
\mu_{n+2}
\end{pmatrix}.
\end{equation}
Note that if $\sigma(f_2,\dot{f}_1)\equiv 0$ for $\tau$ small enough, we obtain the $n=1$ normal form of the Jacobi DE as a subsystem. So without any loss of generality from now on we can assume that $n\geq 2$.

We can simplify the last equation even more by taking
$$
Q(\tau) = \begin{pmatrix}
1 & 0 \\
0 & \exp\left( \int_0^\tau \sigma(\dot{f}_2(s),e_2(s)) ds \right)
\end{pmatrix}.
$$
Note that $Q(0) = \id_n$. We introduce new variables
$$
\begin{pmatrix}
p_1\\
p_2\\
q_1\\
q_2
\end{pmatrix} = 
\begin{pmatrix}
Q^{-1} & 0 \\
0 & Q
\end{pmatrix}
\begin{pmatrix}
\mu_1 \\
\mu_2 \\
\mu_{n+1} \\
\mu_{n+2}
\end{pmatrix}.
$$
If we write $p=(p_1,p_2)$, $q=(q_1,q_2)$, we obtain a normal form
\begin{equation}
\label{eqnormal_jacobi}
\frac{d}{dt}\begin{pmatrix}
p\\
q
\end{pmatrix} = 
\begin{pmatrix}
0 & \frac{B(\tau)}{\tau^m} \\
C(\tau) & 0 
\end{pmatrix}
\begin{pmatrix}
p\\
q
\end{pmatrix}
\end{equation}
where $m$ is the power of the first non zero coefficient of the Taylor expansion of $b(\tau)$,
$$
C(\tau) = Q\begin{pmatrix}
\sigma(e_1,\dot e_1) & 0\\
0 & \sigma(e_2,\dot e_2)
\end{pmatrix}Q
=
\begin{pmatrix}
c_{11}(\tau) & 0\\
0 & c_{22}(\tau) 
\end{pmatrix}
$$
and
$$
\frac{B(\tau)}{\tau^m} =  \begin{pmatrix}
1/b & 0 \\
0 & 0 
\end{pmatrix} 
+
 Q^{-1}\begin{pmatrix}
\sigma(f_1,\dot{f}_1)  & \sigma(f_2,\dot{f}_1)\\
\sigma(f_2,\dot{f}_1) & \sigma(f_2,\dot{f}_2)
\end{pmatrix}Q^{-1} = \begin{pmatrix}
\frac{1}{b(\tau)} + b_{11}(\tau) & b_{12}(\tau)\\
b_{12}(\tau) & b_{22}(\tau)
\end{pmatrix}
$$
Note that (\ref{eqnormal_jacobi}) is still a Hamiltonian system and a normal form for the singular part of the Jacobi DE. Moreover  we can choose our frame so that $B(\tau)$ is positive for small $\tau>0$. Indeed, this is going to be true if the trace and the diagonal entries are non-negative. Since $b(\tau) = b_m\tau^m + O(\tau^{m+1}) <0$ for $\tau>0$ small and the frame $\{e_i,f_i\}$ was chosen to be analytic, we obviously have that the trace and the first diagonal element are negative for $\tau>0$ sufficiently small. We claim that $f_2$ can be chosen in such a way that also the second diagonal term is negative as well for small $\tau>0$. Indeed, the only freedom that we have is to replace $f_2(\tau)$ with $f_2(\tau)+a(\tau)e_2(\tau)$ for some analytic function $a(\tau)$. Then we have
$$
\sigma(f_2+ae_2,\dot{f}_2 + \dot{a}e_2 + a\dot{e}_2) = \sigma(f_2,\dot{f}_2) -\dot{a} + a^2\sigma(e_2,\dot{e}_2),
$$
where we have used that $\sigma(e_2,f_2) = 1$ and $\sigma(\dot e_2,f_2) + \sigma(e_2, \dot{f}_2) = 0$. Then we can simply choose $a(t) = \sin kt$, with $k$ sufficiently large and the explicit form of $Q$ implies that we can assume without any loss of generality that $B(\tau)$ is negative for $\tau$ small.

Before we start proving Theorem~\ref{thm:jump}, it is very helpful to understand the idea of the proof using some simple heuristics in the case $n=1$. Later we will make all the steps rigorous. In the next section we will see under which conditions $\cL$-derivatives exist and how to characterize the Jacobi curve as a solution to the singular Jacobi equation with certain boundary conditions when $n=1$.  

\section{An heuristic argument for $n=1$}
\label{sec:easy_case}
Assume that $n=1$ and let $b_m \tau^m + ...$ be the right series of $b(\tau)$ at $\tau = 0$, with $b_m < 0$. We would like to determine whether or not  $\cL_{0+}$ exists at all. To do this we rewrite (\ref{eqsing1}) as a second order ODE of the form
\begin{equation}
\label{eqsing1_second_ord}
\ddot{\mu}_1 + a_1(\tau)\dot{\mu}_1+a_0(\tau)\mu_1 = 0.
\end{equation}
We say that this equation is oscillating on a given interval, if any solution $\mu_1(\tau)$ has an infinite number of zeroes on that interval. Equivalently the classical Sturm theory of second order ODEs implies that this equation is oscillating whenever any solution of (\ref{eqsing1}) makes an infinite number of turns around the origin in the $(\mu_1,\mu_{n+1})$ plane. Recall that for $n=1$ the Lagrange Grassmanian is nothing but a projective line $\P^1$ and that the Jacobi curve is just the line $\R(\mu_1,\mu_{n+1})$. Therefore Jacobi curves of oscillating equations  have an infinite Maslov index.

For second order ODEs there exist various oscillation and non-oscillation criteria, but among them there is a particularly simple one called the Kneser criteria~\cite{ode2}. It states that a second order ODE of the form 
\begin{equation}
\label{eqsturm}
\ddot x + a(s) x = 0
\end{equation}
is oscillating on $[p,\infty)$, for any $p>0$ if
$$
\lim_{s \to \infty} s^2 a(s) > \frac{1}{4}
$$
and it is non-oscillating if
$$
\lim_{s \to \infty} s^2 a(s) < \frac{1}{4}.
$$
If the limit is exactly $1/4$, Kneser criteria gives us no information and we have to use a different criteria. In order to put the equation (\ref{eqsing1_second_ord}) into form (\ref{eqsturm}), we simply make a change of the time variable $s = 1/\tau$ and after a change of the dependent variable
$$
x(s) =\mu(s) \exp \left( \int_{p}^s \frac{2\theta - a_1(\theta)}{2\theta^2} d\theta\right).
$$
Then we obtain exactly equation (\ref{eqsturm}). 

After applying the Kneser criteria, we find that for any sufficiently small interval $[0,\varepsilon]$
\begin{enumerate}
\item equation (\ref{eqsing1_second_ord}) is oscillating if $m=2$ and $4\sigma(X(0),\dot{X}(0))+b_2 > 0$ or if $m>2$ and $\sigma(X(0),\dot{X}(0)) > 0$;
\item equation (\ref{eqsing1_second_ord}) is non-oscillating if $1\leq m<2$ or $m=2$ and $4\sigma(X(0),\dot{X}(0))+b_2 < 0$, or if $m>2$ and $\sigma(X(0),\dot{X}(0)) <0$. 
\end{enumerate} 
Thus we conclude that in the first case the Jacobi curve has no right limit and therefore it also has an infinite Maslov index. 

Since we just want to give an idea of how the proof works, we assume that the Jacobi DE \eqref{eqsing1} is of the simplest form
\begin{equation}
\frac{d}{d\tau}\begin{pmatrix}
\mu_1 \\
\mu_{n+1}
\end{pmatrix} = 
\begin{pmatrix}
0 & \dfrac{1}{\tau^2} \\
C & 0
\end{pmatrix}
\begin{pmatrix}
\mu_1 \\
\mu_{n+1}
\end{pmatrix},
\end{equation}
where $C$ is constant. We then make a time-dependent change of variables
$$
\begin{pmatrix}
p\\
q
\end{pmatrix}
=
\begin{pmatrix}
\tau^{1/2} & 0 \\
0 & \tau^{-1/2}
\end{pmatrix}
\begin{pmatrix}
\mu_1\\
\mu_{n+1}
\end{pmatrix}
$$
and obtain
\begin{equation}
\label{eq_blowup_sing1}
\frac{d}{d\tau}\begin{pmatrix}
p\\
q
\end{pmatrix} = \frac{1}{\tau}\begin{pmatrix}
\frac{1}{2}	& 1 \\
C & -\frac{1}{2}
\end{pmatrix}
\begin{pmatrix}
p\\
q
\end{pmatrix} = -\tau^{-1}JH\begin{pmatrix}
p\\
q
\end{pmatrix}
\end{equation}
Note that this change of variables can not change whether or not the Maslov index is finite. It is clear that $\tau^{-1}$ multiplier just scales the speed along solutions, but does not change the trajectories. We could have actually got rid off it using a change of time variable. This means that if we drop $\tau^{-1}$ in the equation above, the overall phase-portrait does not change. It will be completely determined by the structure of the matrix $-JH$. 

A description of various phase portraits on the Lagrangian Grassmanian was given in~\cite{riccati}. We will use the results from that article to work out the general case. For example, for $n=1$ we can only have fixed points or periodic trajectories, which depend on the eigenvalues and eigenvectors of the matrix $-JH$. Its eigenvalues are
$$
\lambda_{1,2} = \pm\frac{\sqrt{1+4C}}{2}.
$$
If $1+4C < 0$, then we have only a single closed trajectory and no equilibrium points. Thus the trajectory rotates on $\P^1$, and because of the $\tau^{-1}$ multiplier in \eqref{eq_blowup_sing1} the curve rotates faster and faster as we get closer to $\tau = 0$ and therefore we get an infinite Maslov index. If $1+4C >0$, then we have two equilibrium points: a stable and a non-stable one, that are given by two lines spanned by the eigenvectors of $-JH$. Thus all solutions except the equilibrium ones tend to the unstable equilibrium as $\tau \to 0$ and to the stable one as $\tau \to \infty$. In this case the Maslov index is finite. Note that in our example $C = \sigma(X(0),\dot{X}(0))/b_2$ and thus we recover the classical Kneser criteria. From here we can also see very well why the case $1+4C =0$ is excluded. It is not stable under small perturbations and corresponds to a resonant situation when the two equilibrium points merge.  

Having a small dimensional situation allows us to actually draw the extend-phase portrait. We introduce new variables $U$ and $V$ defined as $\mu_{n+1}= U\mu_1$, $p=Vq$. It is clear from the definitions that $V = \tau U^{-1}$. Since we work in a coordinate chart of the Grassmanian we can assume that $U,V\neq 0$. We differentiate expressions in the definition to obtain a couple of related Riccati equations
$$
\dot U = C - \frac{U^2}{\tau^2},
$$
$$
\dot V = \frac{1+V-CV^2}{\tau}.
$$

\begin{figure}[ht!]  
\vspace{-4ex} \centering \subfigure[]{
\includegraphics[width=0.44\linewidth]{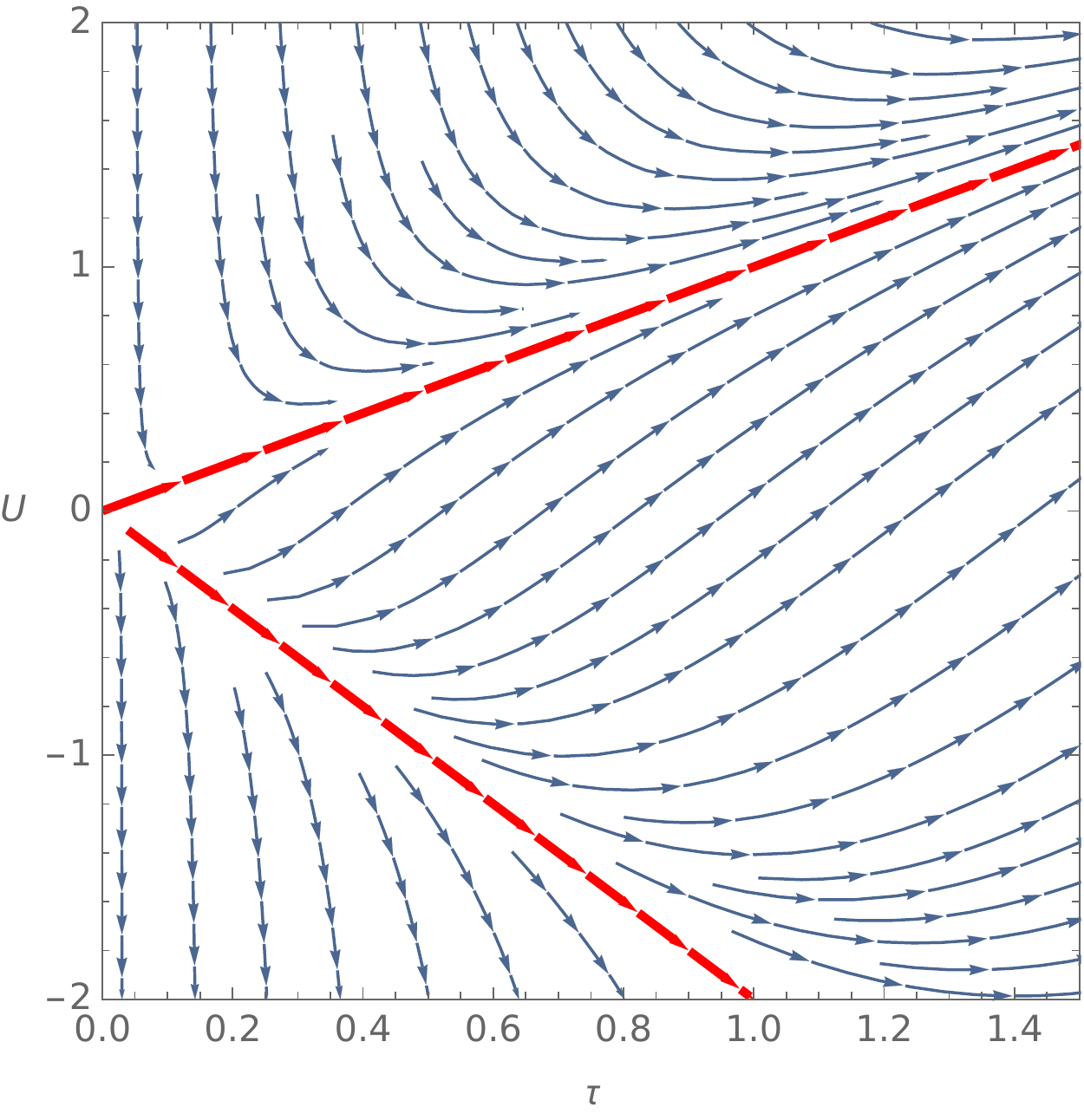} \label{fig:nososc_before} }  
\hspace{4ex}
\subfigure[]{
\includegraphics[width=0.44\linewidth]{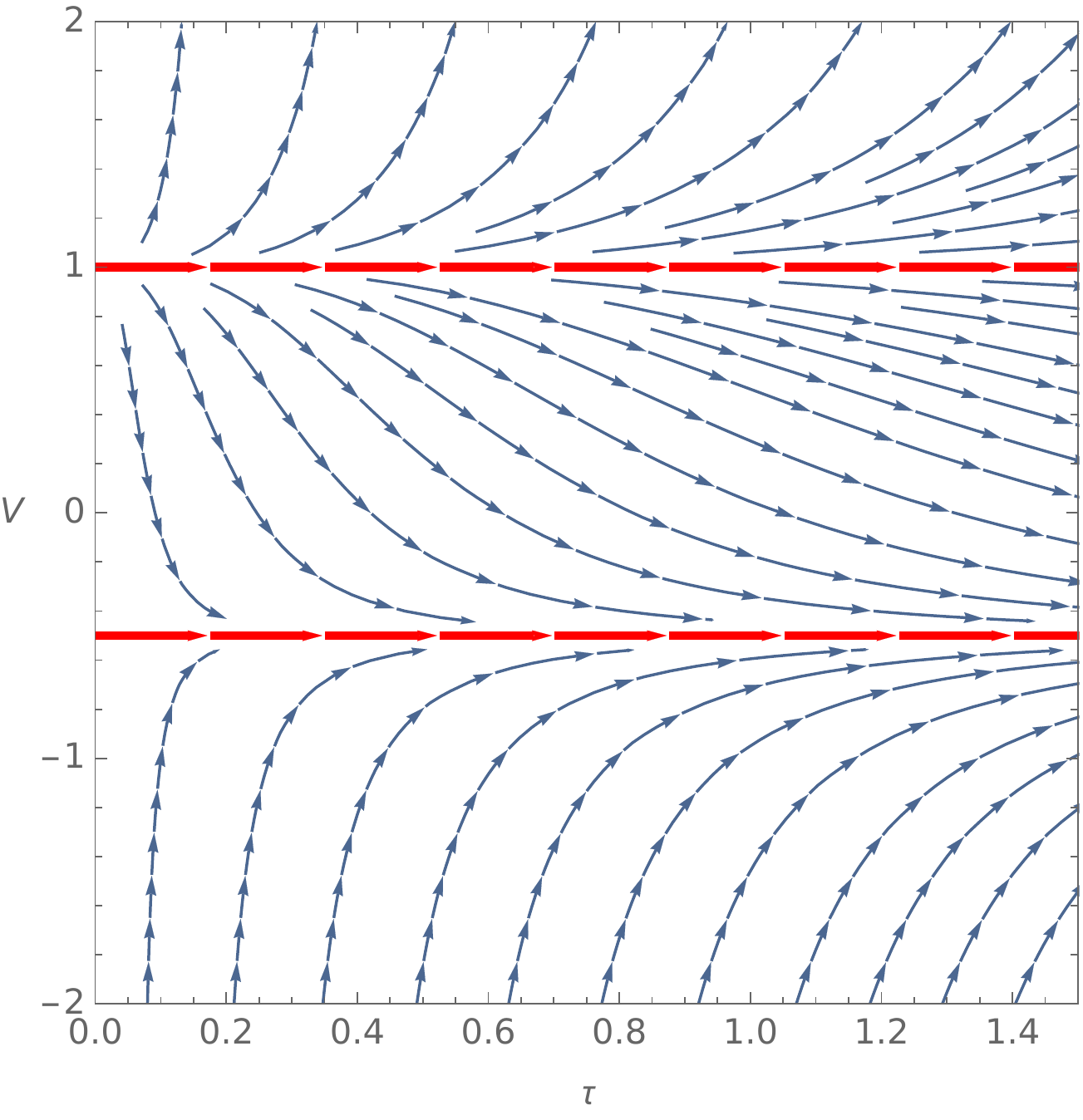} \label{fig:nososc_after} }  
\caption{Phase portrait of a non-oscillating system: \subref{fig:nososc_before} before blow-up; \subref{fig:nososc_after} after blow-up.} \label{fig:nonosc}
\end{figure}

In the picture~\ref{fig:nonosc} the non-oscillating extended phase portrait before and after the change of variables is depicted. We can see clearly, that the extended phase portrait is separated by two separatrix into three regions. After the blow up the two separatrix solutions have different initial values correspond to the equilibrium solutions. The stable solution after a blow-up can be described using an initial value problem, where as the unstable one can not. There is an infinite number of solutions that start from the unstable equilibrium.

We claim that the stable separatrix is the Jacobi curve for $\tau > 0$. To see this we do as discussed in the previous subsection. Assume that $\cL_0$ is given by $U_0\neq 0$. Then the Jacobi curve is the limit of solutions of the Riccati equations with $U(\varepsilon) = U_0$. Similarly after a blow up it corresponds to a limit of solutions with boundary conditions $V(\varepsilon)= \varepsilon U_0^{-1}$. On the picture~\ref{fig:jacobi_conv} we can see this convergence numerically in the original phase portrait. 

\begin{figure}[ht!]  
\centering
\includegraphics[width=0.24\linewidth]{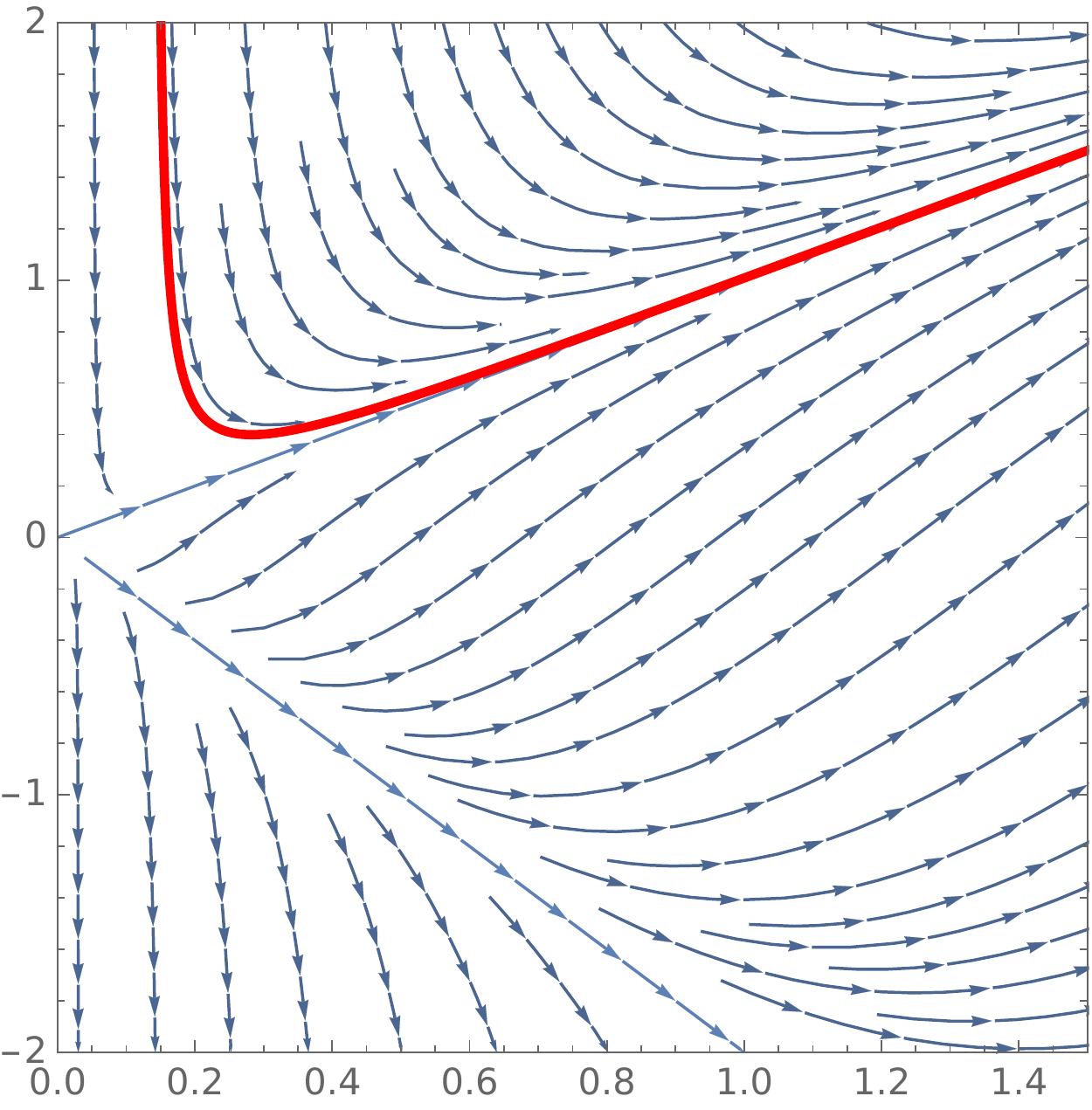} 
\includegraphics[width=0.24\linewidth]{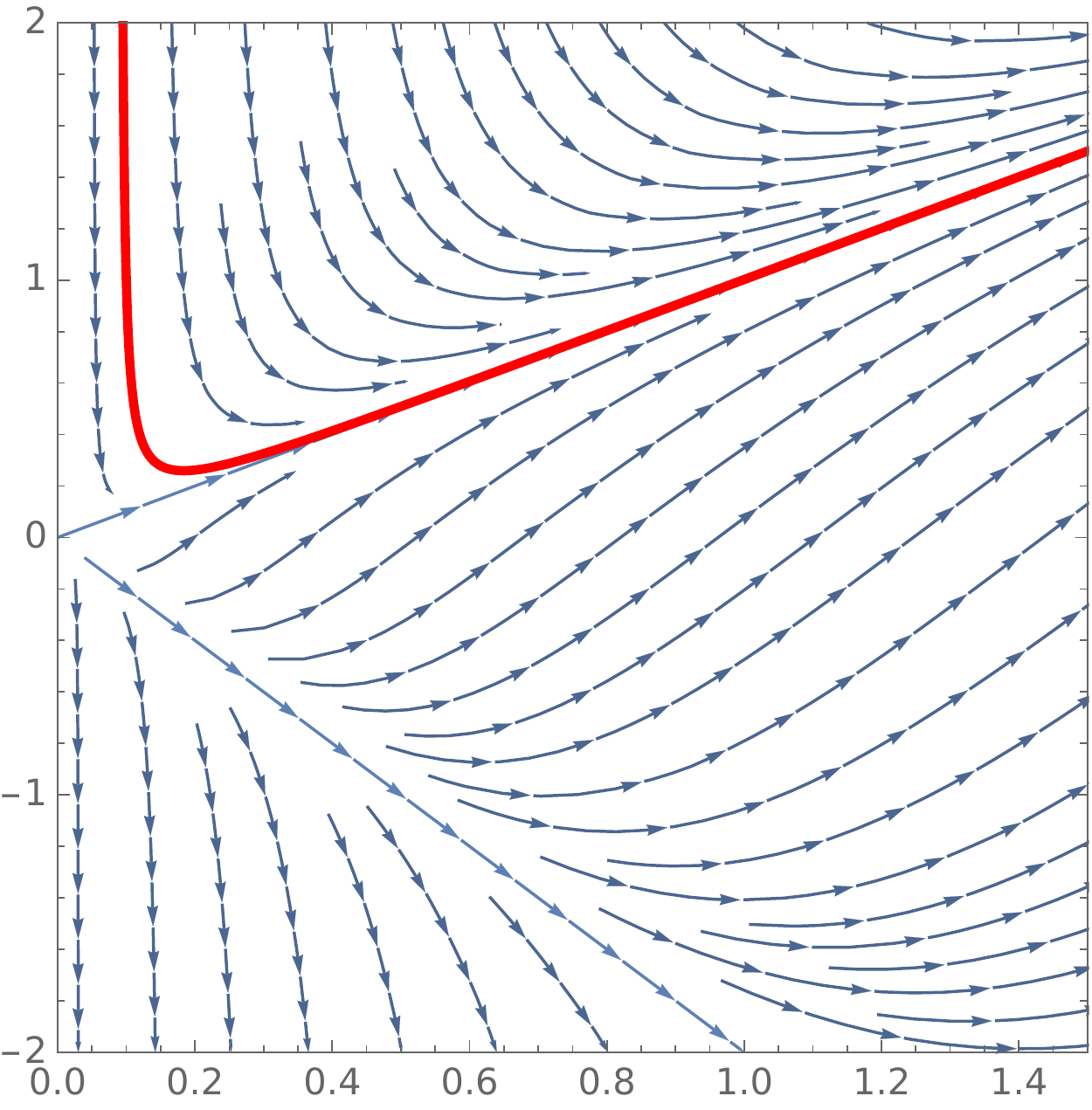}
\includegraphics[width=0.24\linewidth]{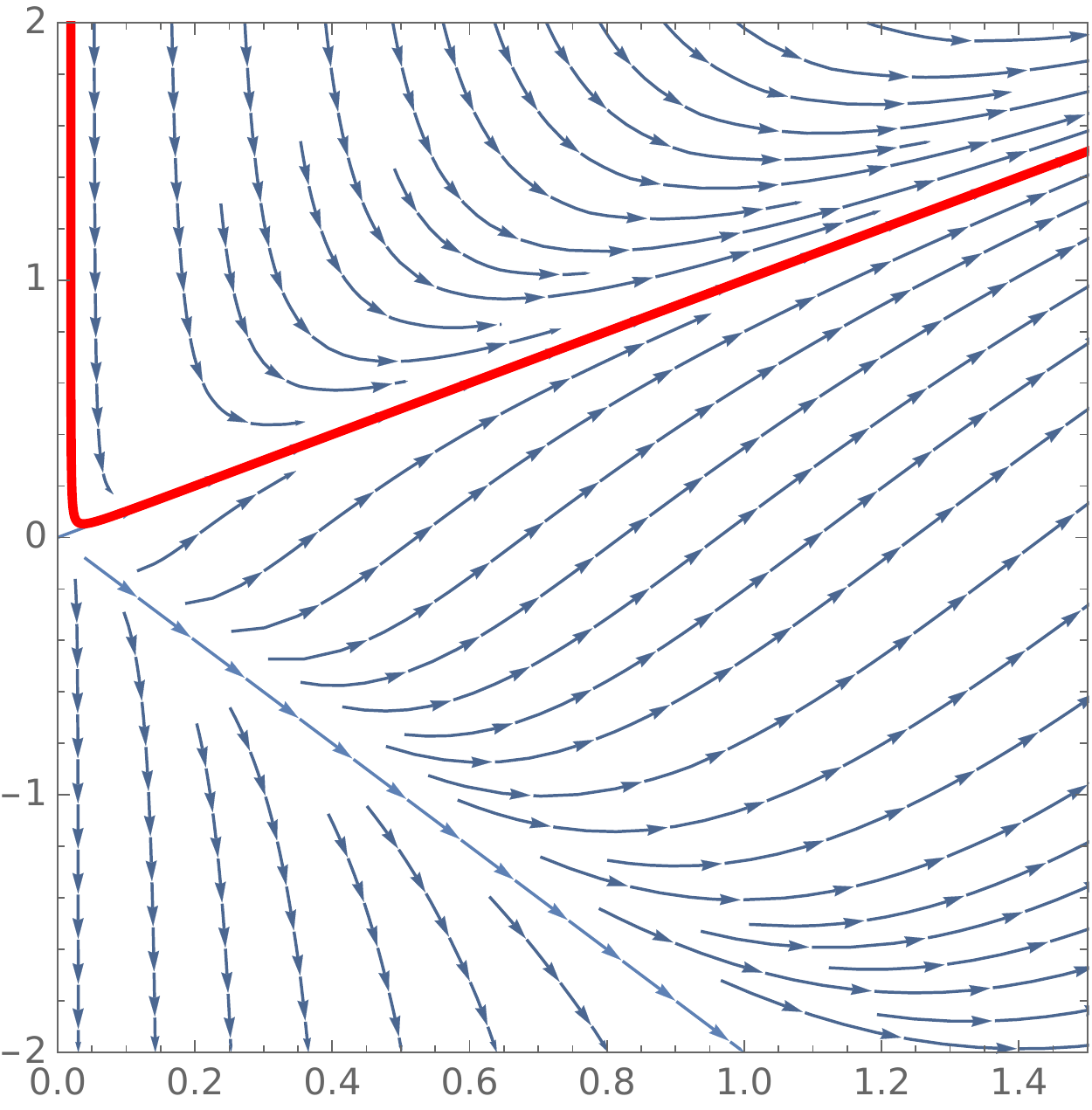}
\includegraphics[width=0.24\linewidth]{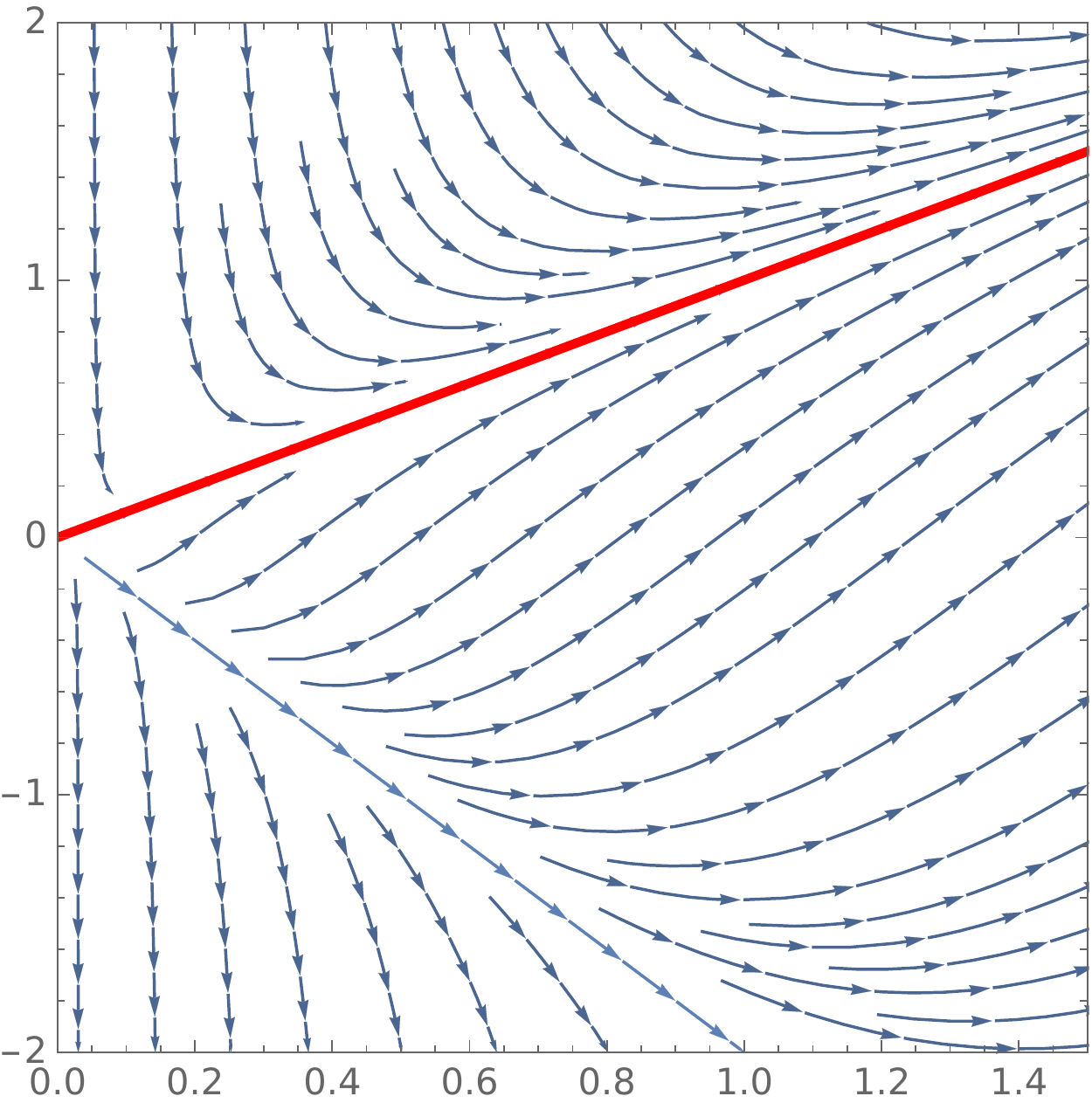}
\label{fig:jacobi_conv}
\caption{Point convergence of $\Lambda(\tau)$ to the Jacobi curve for $C=2$.}
\end{figure}

In the next sections our goal is to make all the ideas from this section rigorous. Our proves are mostly perturbative and we will first prove them for the constant matrix case and then expand it to the general case. Following the outline of this section we first prove an analogue of the Kneser criteria and identify the non-oscillating cases. Such a criteria is a necessary optimality condition on its own. Then using the general theory of ODEs with singular regular points we are going to characterize the jump of the Jacobi curve. Finally using the results from article~\cite{reid} we are going to characterize the first derivative of the Jacobi curve for $m=1,2$ and prove that conditions on the first derivative guarantee uniqueness of the solution of the extended Jacobi equation that characterizes $\cL_t$.

\section{Kneser oscillation criteria for a simple class of Hamiltonian systems}
\label{sec:kneser}
In the case $n=1$, the Kneser criteria gives sufficient conditions under which a second order ODE is oscillating or non-oscillating. This result is just a consequence of the Sturm comparison theorem and an explicit solution of an Euler-type linear equation~\cite{ode2}. 

Kneser criteria can be derived as a consequence of more general integral criteria and the modern theory of oscillation of ODE systems tends to generalize those. As a result, we were not able to find in the literature a similar simple point-criteria. Thus we would like to slightly generalize the Kneser criteria to a special class of Hamiltonian systems that include system~\eqref{eqnormal_jacobi}. Our main tool will be a straightforward consequence of Theorem 1 in~\cite{arnold2,agr_gamk_symp}, that can be seen as a generalization of the Strum comparison theorem
\begin{theorem}
\label{thm_sturm}
Let $I$ be an open interval and $A_i(\tau),B_i(\tau),C_i(\tau)$, $i=1,2$ quadratic matrices whose elements are differentiable on $I$. Assume that $B_i(t)$ and $C_i(t)$ are symmetric. We consider the corresponding Hamiltonians
$$
H_i = \begin{pmatrix}
C_i(\tau) & -A_i^T(\tau) \\
-A_i(\tau) & -B_i(\tau)
\end{pmatrix}
$$
If Hamiltonians $H_i(\tau)$ satisfy
$$
H_2(\tau) \geq H_1(\tau)
$$
then for any two trajectories $\Lambda_i(t)$ whose endpoints are transversal to $\Lambda$, we have the following inequality
$$
\Mi_{\Lambda}\Lambda_1(t) - n \leq \Mi_{\Lambda}\Lambda_2(t).
$$
\end{theorem}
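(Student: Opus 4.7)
The plan is to connect $H_1$ and $H_2$ by the convex homotopy $H_s(\tau) = (1-s)H_1(\tau) + sH_2(\tau)$, $s\in[0,1]$, and to express the difference of Maslov indices as the Maslov index of a single auxiliary path. Let $\Phi_s(t)$ denote the symplectic flow of $H_s$ and $\Lambda_s(t) = \Phi_s(t)\Lambda_0$ the corresponding Lagrangian trajectories, where $\Lambda_0 = \Lambda_1(0) = \Lambda_2(0)$ is the common initial point. This defines a continuous map of the rectangle $[0,1]\times[0,T]$ into $L(n)$, constant on the bottom edge $t=0$, equal to $\Lambda_1$ and $\Lambda_2$ on the vertical edges, and tracing the terminal path $\gamma(s) := \Phi_s(T)\Lambda_0$ on the top edge. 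Since the rectangle is contractible the boundary loop is null-homotopic in $L(n)$, so by homotopy invariance and additivity of the Maslov index (applied after a small generic perturbation, if necessary, to ensure the requisite transversalities on the sides),
\begin{equation*}
\Mi_\Lambda \Lambda_2(t) - \Mi_\Lambda \Lambda_1(t) \;=\; \Mi_\Lambda \gamma.
\end{equation*}

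The second step is to show that $\gamma$ is a non-decreasing curve in $L(n)$, i.e.\ that its tangent vector $\dot\gamma(s) \in T_{\gamma(s)}L(n)$ corresponds, under the canonical identification of the tangent space with the space of quadratic forms on $\gamma(s)$, to a positive semidefinite form $K_s$. A Duhamel-type computation gives
\begin{equation*}
\frac{d\Phi_s(T)}{ds} \;=\; \Phi_s(T)\int_0^T \Phi_s(\tau)^{-1}\,\overrightarrow{(H_2-H_1)(\tau)}\,\Phi_s(\tau)\,d\tau,
\end{equation*}
so $K_s$ is the restriction to $\gamma(s)$ of the time-integral along $\Phi_s$ of the pullback of the quadratic form $H_2-H_1$. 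Symplectic transformations preserve positivity of pulled-back quadratic forms, and the integral of a family of non-negative forms is non-negative; since $H_2-H_1 \geq 0$ by hypothesis, we conclude $K_s \geq 0$.

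The third step is to bound $\Mi_\Lambda \gamma \geq -n$. Choose a Lagrangian plane $\Lambda'$ close to $\Lambda$ that is transversal to $\gamma(s)$ for every $s \in [0,1]$; such a plane exists because the set of Lagrangians non-transversal to at least one $\gamma(s)$ is contained in a proper analytic subvariety of $L(n)$. By the second part of the theorem on change of the reference plane quoted in Section~\ref{sec:sympl}, $|\Mi_\Lambda \gamma - \Mi_{\Lambda'}\gamma| \leq n$. Since $\Lambda' \pitchfork \gamma(s)$ for every $s$, all intersection dimensions $\dim(\Lambda' \cap \gamma(s))$ vanish, so the monotone case of Theorem~\ref{thm:comparison}, applied to $\gamma$ with non-negative infinitesimal generator $K_s$, yields $\Mi_{\Lambda'} \gamma = 0$. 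Combining the estimates gives $\Mi_\Lambda \gamma \geq -n$, which substituted into the identity of Step~1 yields $\Mi_\Lambda \Lambda_2 \geq \Mi_\Lambda \Lambda_1 - n$.

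The main obstacle is the application of Theorem~\ref{thm:comparison} in Step~3: that theorem is phrased for Hamiltonian flows on a fixed symplectic vector space, whereas we need the monotone case for an abstract curve in $L(n)$ whose velocity is prescribed only via its associated quadratic form. One can sidestep this either by re-deriving the monotone case directly in a coordinate chart $\Delta^\pitchfork$, where the dynamics is governed by a Riccati equation and monotonicity of $\gamma$ translates into a sign condition on $\dot S$ that controls the signatures at the endpoints, or by realizing $\gamma$ as a trajectory of an honest quadratic time-dependent Hamiltonian. The $n$-deficit in the final inequality is precisely the cost incurred when intermediate terminal planes $\Lambda_s(T)$ cross the Maslov cycle of $\Lambda$ along the homotopy, and simple two-dimensional examples show it is sharp.
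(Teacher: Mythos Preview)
The paper does not prove Theorem~\ref{thm_sturm}; it is quoted as a known result from \cite{arnold2,agr_gamk_symp}, so there is no in-paper argument to compare against. Your homotopy-plus-monotonicity strategy is in fact the standard route taken in those references, so in that sense you have reconstructed the intended proof.

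Two points deserve comment. First, you silently assume $\Lambda_1(0)=\Lambda_2(0)$ so that the bottom edge of the rectangle is constant. The theorem as stated in the paper says only ``any two trajectories $\Lambda_i(t)$'' and does not impose a common initial plane. In the cited sources the comparison is indeed made for trajectories issued from the same Lagrangian, and that is the case actually used downstream (Corollary~\ref{cor_sturm} and the oscillation arguments), so your assumption is harmless for the applications; but you should flag it explicitly, since with distinct initial planes the bottom edge contributes an extra Maslov term and the constant $n$ would have to be replaced.

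Second, the gap you identify in Step~3 is genuine but mild. You do not actually need $\Mi_{\Lambda'}\gamma=0$; it suffices that $\Mi_{\Lambda'}\gamma\ge 0$, and this follows directly from the coordinate definition of the Maslov index once $\gamma$ lies entirely in the chart $(\Lambda')^\pitchfork$: monotonicity gives $S(1)\ge S(0)$ as quadratic forms, hence $\operatorname{sign}S(1)\ge\operatorname{sign}S(0)$ by the min--max characterization of eigenvalues, so $\Mi_{\Lambda'}\gamma=\tfrac12(\operatorname{sign}S(1)-\operatorname{sign}S(0))\ge 0$. This avoids invoking Theorem~\ref{thm:comparison} for an abstract curve altogether, and together with the reference-plane estimate $|\Mi_\Lambda\gamma-\Mi_{\Lambda'}\gamma|\le n$ gives the desired $\Mi_\Lambda\gamma\ge -n$.
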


We are going to use a simple direct corollary of that result
\begin{corollary}
\label{cor_sturm}
Let $I$ be an open interval and $A_i(\tau),B_i(\tau),C_i(\tau)$, $i=1,2$ quadratic matrices whose elements are differentiable on any compact subset $[a,b]\subset I$. Assume that $B_i(t)$ and $C_i(t)$ are symmetric and denote by $H_i(\tau)$ the corresponding Hamiltonians, s.t. $H_2(\tau) \geq H_1(\tau)$. Then we have the following implications:
\begin{enumerate}
\item If $\exists \Lambda\in L(n)$, s.t. $H_2(\tau)|_{\Lambda}\leq 0$ for all $\tau\in I$ and the second system is oscillating, then the first system is oscillating as well;
\item If $\exists \Lambda\in L(n)$, s.t. $H_1(\tau)|_{\Lambda}\geq 0$ for all $\tau\in I$ and the first system is oscillating, then the second system is oscillating as well;
\item If $\exists \Lambda\in L(n)$, s.t. $H_i(\tau)|_{\Lambda}\leq 0$ for all $\tau\in I$ and the first system is non-oscillating, then the second system is non-oscillating as well;
\item If $\exists \Lambda\in L(n)$, s.t. $H_i(\tau)|_{\Lambda}\geq 0$ for all $\tau\in I$ and the second system is non-oscillating, then the first system is oscillating as well;
\end{enumerate} 
\end{corollary}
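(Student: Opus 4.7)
The plan is to derive all four implications by combining Theorem~\ref{thm_sturm} (Sturm-type comparison of Maslov indices) with Theorem~\ref{thm:comparison} (identification of the Maslov index with $\pm$ the count of intersections when $H|_\Lambda$ has a fixed sign). The key preliminary observation is that a Hamiltonian system is oscillating on $I$ if and only if, for some (equivalently, any) pair of reference and initial Lagrangian planes $\Lambda, \Lambda_0$, the quantity $\Mi_\Lambda \Phi(t)\Lambda_0$ is unbounded on $I$. One direction is trivial; the other uses that at each crossing $\dim(\Lambda \cap \Phi(\tau)\Lambda_0) \leq n$ so the Maslov index contribution of every individual crossing is bounded by $n$ in absolute value. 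Independence from the choice of reference plane follows from the first theorem in Section~\ref{sec:sympl}, namely $|\Mi_{\Delta_1} - \Mi_{\Delta_2}| \leq n$ for open curves.

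Once this translation is in place, each case reduces to a short argument. For case (1), the hypothesis $H_2(\tau)|_\Lambda \leq 0$ together with Theorem~\ref{thm:comparison} yields
\[
\Mi_\Lambda \Phi_2(t)\Lambda_0 \;=\; -\sum_{\tau \in [a,t]} \dim\bigl(\Lambda \cap \Phi_2(\tau)\Lambda_0\bigr),
\]
which tends to $-\infty$ as $t$ exhausts $I$ because system~2 is oscillating. Theorem~\ref{thm_sturm} then gives $\Mi_\Lambda \Phi_1(t)\Lambda_0 \leq \Mi_\Lambda \Phi_2(t)\Lambda_0 + n \to -\infty$, so the Maslov index of system~1 is unbounded, i.e.\ system~1 oscillates. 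Case (2) is symmetric: $H_1|_\Lambda \geq 0$ forces $\Mi_\Lambda \Phi_1(t)\Lambda_0 \to +\infty$, and Theorem~\ref{thm_sturm} propagates this to system~2. Cases (3) and (4) are the contrapositives. Under the sign assumption on both Hamiltonians, Theorem~\ref{thm:comparison} pins the Maslov indices on the same side of zero; non-oscillation (i.e.\ boundedness of the Maslov index) of one system together with the Sturm inequality of Theorem~\ref{thm_sturm} yields boundedness of the other, hence non-oscillation.

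The main technical obstacle I anticipate is the transversality requirement in the statement of Theorem~\ref{thm_sturm}: the inequality $\Mi_\Lambda \Lambda_1(t) - n \leq \Mi_\Lambda \Lambda_2(t)$ is only asserted for curves whose endpoints are transversal to the reference plane, whereas in an oscillating system the crossings are the whole point. The way around is to work on compact subintervals $[a,b]\subset I$, choose $b$ so that the endpoints $\Phi_i(b)\Lambda_0$ are transversal to $\Lambda$ (this holds for almost every $b$ by analyticity or transversality genericity), apply the inequality there, and then let $b$ approach $\sup I$ along such a sequence. Since each of the involved Maslov indices can change by at most $n$ under a small displacement of the endpoint, the asymptotic blow-up or boundedness statement transfers to the whole interval unchanged. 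Once this approximation argument is carried out once, all four cases are immediate from the bookkeeping above.
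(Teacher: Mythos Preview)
Your proposal is correct and follows essentially the same approach as the paper: combine Theorem~\ref{thm:comparison} to translate the sign condition $H_i|_\Lambda \lessgtr 0$ into a one-sided Maslov index, use oscillation to send that index to $\pm\infty$, and then apply the Sturm inequality of Theorem~\ref{thm_sturm} to propagate unboundedness (or boundedness) to the other system. Your treatment is in fact more careful than the paper's own proof, which simply writes $\Mi_\Lambda \Lambda_1(\tau) \leq \Mi_\Lambda \Lambda_2(\tau) + n = -\infty$ without addressing the endpoint-transversality hypothesis of Theorem~\ref{thm_sturm}; your remark about restricting to generic compact subintervals and letting $b\nearrow\sup I$ is exactly the missing justification.
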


\begin{proof}
The proof is just a corollary of Theorem~\ref{thm_sturm} and Theorem~\ref{thm:comparison}. For example, in the first case Theorem~\ref{thm:comparison} implies that
$$
\Mi_{\Lambda}\Lambda_2(\tau) \leq 0.
$$
The assumption that the corresponding system is oscillating means that the Maslov index of any solution is infinite. Then from the comparison Theorem~\ref{thm_sturm} we obtain
$$
\Mi_{\Lambda}\Lambda_1(\tau) \leq \Mi_{\Lambda}\Lambda_2(\tau) + n = -\infty.
$$
The remaining implications are proven in the same way. 
\end{proof}

The goal of this section is to prove the following result
\begin{theorem}
\label{thm_kneser}
Consider a Hamiltonian system
$$
\frac{d}{d\tau}\begin{pmatrix}
p\\
q
\end{pmatrix}
=
\begin{pmatrix}
A(\tau) & \frac{B(\tau)}{\tau^m}\\
C(\tau) & -A^T(\tau)\\
\end{pmatrix}
\begin{pmatrix}
p\\
q
\end{pmatrix},
$$
s.t. the following assumptions are satisfied 
\begin{enumerate}
\item $B(\tau)$ is a semi-definite smooth symmetric $n\times n$-matrix, s.t. $B(\tau)$ is sign-definite for $\tau>0$;
\item $C(\tau)$ is a smooth symmetric $n\times n$-matrix;
\item $A(\tau)$ an arbitrary smooth $n\times n$-matrix;
\end{enumerate}

Then the following statements are true:
\begin{enumerate}
\item Let $m =2$. If all the eigenvalues of the matrix $C(0)B(0)$ are strictly greater than $-1/4$, then the system is non-oscillating on $(0,\varepsilon)$. If at least one eigenvalue is smaller than $-1/4$, then the system is oscillating on the same interval;
\item If $0\leq m <2 $, then the system is not oscillating on $(0,\varepsilon)$;
\item Let $m >2$. If $C(0)$ is sign definite on the eigenspace that is transversal to the kernel of $B(0)$ with the same sign as $B(0)$, then the system is non-oscillating on $(0,\varepsilon)$. If $C(0)$ is not semi-definite on this subspace with the same sign as $B(0)$, then the system is oscillating on the same interval.

\end{enumerate}
\end{theorem}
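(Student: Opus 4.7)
The plan is to apply Corollary~\ref{cor_sturm} by sandwiching $H(\tau)$ between two constant-coefficient model Hamiltonians whose oscillation is determined by scalar Euler equations. Since oscillation on $(0,\varepsilon)$ depends only on the germ of $H$ at $\tau=0$, for any $\delta>0$ we can find $\varepsilon>0$ such that on $(0,\varepsilon)$
$$
H_-^\delta(\tau)\leq H(\tau)\leq H_+^\delta(\tau),\qquad
H_\pm^\delta(\tau)=\begin{pmatrix}C(0)\pm\delta\id & 0\\ 0 & -(1\pm\delta)B(0)/\tau^m\end{pmatrix}.
$$
The off-diagonal $A$-block is bounded and, via a Cauchy--Schwarz-type estimate, is absorbed into a small portion of the dominant $B(0)/\tau^m$ term at the cost of shifting $C(0)$ by a bounded constant; the Taylor remainders of $B$ and $C$ are handled in the same way. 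The Lagrangian plane required by Corollary~\ref{cor_sturm} can be taken to be $\Lambda=\Sigma$, on which $H_\pm^\delta|_\Sigma=-(1\pm\delta)q^TB(0)q/\tau^m$ inherits the definite sign of $-B(0)$; this is the reason the sign-definiteness of $B(\tau)$ was placed in the hypotheses.

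Next, the model system $\dot p=(B(0)/\tau^m)q$, $\dot q=C(0)p$ is equivalent to the second-order vector equation $\tau^m\ddot p+m\tau^{m-1}\dot p=B(0)C(0)p$. The Liouville substitution $p=\tau^{-m/2}x$ removes the first-order term and yields
$$
\ddot x+\left(\frac{m(2-m)}{4\tau^2}-\frac{B(0)C(0)}{\tau^m}\right)x=0,
$$
which decouples along the (generalized) eigenspaces of $B(0)C(0)$ (whose nonzero spectrum coincides with that of $C(0)B(0)$). On an eigenspace with eigenvalue $\lambda$ the coefficient is $c(\tau)=m(2-m)/(4\tau^2)-\lambda/\tau^m$, so the scalar Kneser criterion recalled in Section~\ref{sec:easy_case} is governed by $\lim_{\tau\to 0}\tau^2c(\tau)$ compared to $1/4$.

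For $m=2$, $\tau^2c(\tau)=-\lambda$, so the Euler indicial polynomial $r^2+r-\lambda=0$ has roots $(-1\pm\sqrt{1+4\lambda})/2$: real, and the solutions $\tau^{r_\pm}$ non-oscillating, precisely when $\lambda>-1/4$; complex, and the solutions $\tau^{-1/2}\cos(\tfrac{1}{2}\sqrt{-1-4\lambda}\log\tau+\varphi)$ oscillating, when $\lambda<-1/4$. Choosing $\delta$ small enough that the strict eigenvalue inequality survives the sandwich, Corollary~\ref{cor_sturm} yields item (1). For $0\leq m<2$, $\tau^2c(\tau)\to m(2-m)/4\leq 1/4$ uniformly in $\lambda$ and sign; the borderline $m=1$ is settled by noting that the model Euler equation then has the explicit non-oscillating fundamental solutions $\tau^{1/2}$ and $\tau^{1/2}\log\tau$, which proves (2). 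For $m>2$, $\tau^2c(\tau)\to-\lambda\cdot\infty$, which is $<1/4$ when $\lambda>0$ and $>1/4$ when $\lambda<0$; on the subspace transversal to $\ker B(0)$ the eigenvalues of $C(0)B(0)$ are positive exactly when $C(0)$ has the same sign there as $B(0)$, which is the hypothesis of (3).

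The main obstacle is twofold. First, $B(0)$ and $C(0)$ do not commute in general, so $B(0)C(0)$ need not be semisimple; one must work in Jordan blocks and verify that the off-diagonal Jordan corrections contribute only bounded perturbations of the diagonal model after the Liouville substitution, absorbable by a further small $\delta$-shift thanks to the strict inequalities in items (1) and (3). Second, in case (3) the matrix $B(0)$ may have a nontrivial kernel; on that kernel the singular term degrades to order $1/\tau^{m-k}$, where $k\geq 1$ is determined by the next non-vanishing Taylor coefficient of $B$, so the argument has to be iterated on $\ker B(0)$ with reduced singularity order (or the kernel directions handled separately by case (2)). In both steps the coupling through $A(\tau)$ remains bounded and does not alter the leading indicial analysis.
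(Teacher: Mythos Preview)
Your overall strategy---sandwich by constant-coefficient comparison Hamiltonians and invoke Corollary~\ref{cor_sturm}---matches the paper's, but your analysis of the comparison model differs substantially and contains a genuine gap.

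The paper does not reduce the model $(\dot p,\dot q)=((B/\tau^2)q,\,Cp)$ with constant $B,C$ to scalar Euler equations. Instead it applies the symplectic rescaling $(p,q)\mapsto(\tau^{1/2}p,\tau^{-1/2}q)$ followed by the time change $s=\log\tau$, obtaining an \emph{autonomous} linear Hamiltonian system $\lambda'=-JH\lambda$ on an unbounded interval. A cited result (Theorem~\ref{thm_luca}) then says such a system is oscillating iff $-JH$ has a purely imaginary eigenvalue, and a block-determinant computation shows this happens iff $BC$ has a real eigenvalue below $-1/4$. No diagonalisation of $BC$ is needed; only its spectrum enters, and Theorem~\ref{thm_spectrum} guarantees that spectrum is real because $B$ is semi-definite. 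The paper also removes $A(\tau)$ exactly, by the symplectic gauge $(p,q)\mapsto(\Phi p,(\Phi^{-1})^Tq)$ with $\dot\Phi=A\Phi$, rather than absorbing it into the sandwich via Cauchy--Schwarz.

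Your scalar-Kneser route fails at precisely the obstacle you flag. After the Liouville substitution the off-diagonal entries of a Jordan block of $B(0)C(0)$ carry coefficient $\tau^{-m}$, the \emph{same} singular order as the diagonal part, so they are not ``bounded perturbations absorbable by a further small $\delta$-shift''. For $m=2$ a Jordan block at eigenvalue $\lambda$ forces solutions of the form $\tau^{r_\pm}(\log\tau)^k$; these are indeed non-oscillating when $\lambda>-1/4$, but you would have to establish this directly and then argue that component-wise non-oscillation implies finiteness of the Maslov index of the full system, which is not automatic for $n>1$. A clean repair is to make $B$ strictly definite in the comparison (replace $(1\pm\delta)B(0)$ by $B(0)\mp\delta\id$, as the paper in effect does); then the perturbed $BC$ is similar to a symmetric matrix and genuinely decouples. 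A smaller slip: for $m=1$ your Liouville coefficient is $1/(4\tau^2)-\lambda/\tau$, and the claimed fundamental solutions $\tau^{1/2}$, $\tau^{1/2}\log\tau$ solve only the $\lambda=0$ part. The paper bypasses the entire range $0\le m<2$ by writing $B(\tau)/\tau^m=\hat B(\tau)/\tau^2$ with $\hat B(\tau)=\tau^{2-m}B(\tau)$, so that $\hat B(0)C(0)=0$ has all eigenvalues $0>-1/4$ and item (1) applies directly.
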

We are pretty sure that this theorem can be derived as a consequence of some existing oscillation criteria for Hamiltonian systems, but we prefer to give here a simple geometric proof of the result using Theorem~\ref{thm_sturm}, that seems to be new.

As it can be seen from the statement the matrix $A$ plays no essential role. Similarly to a change of variables in the Section~\ref{sec:easy_case}, we make a time-dependent change of variables
$$
\begin{pmatrix}
p \\
q
\end{pmatrix}
\mapsto
\begin{pmatrix}
\Phi & 0 \\
0 & (\Phi^{-1})^T
\end{pmatrix}
\begin{pmatrix}
p \\
q
\end{pmatrix}
$$
where $\Phi(t)$ satisfies
$$
\left\{\begin{matrix}
\dot \Phi = A\Phi,\\ 
\Phi(0) = \id_n.
\end{matrix}\right.
$$
Then $\Phi(t)$ is the fundamental matrix of the corresponding linear equation and it is smooth. Therefore our change of variables is a non-degenerate symplectic change of variables and it does not change oscillatory properties of the Hamiltonian systems. 

Our Hamiltonian system now takes the form
$$
\frac{d}{d\tau}\begin{pmatrix}
p\\
q
\end{pmatrix}
=
\begin{pmatrix}
0 & \frac{\Phi^{-1}B(\Phi^{-1})^T}{\tau^m}\\
\Phi^T C \Phi & 0\
\end{pmatrix}
\begin{pmatrix}
p\\
q
\end{pmatrix},
$$
We can now simply redefine matrices $B$ and $C$. This implies that without any loss of generality, we can assume that $A(\tau)\equiv 0$.

In order to apply the comparison Theorem~\ref{thm_sturm}, we need a model example, which oscillating properties we understand very well. Such a model is given in the next lemma
\begin{lemma}
\label{lem_osc}
Consider a Hamiltonian system of the form
\begin{equation}
\label{eq:lin_ham}
\frac{d}{d\tau}\begin{pmatrix}
p\\
q
\end{pmatrix}
=
\begin{pmatrix}
0 & \frac{B}{\tau^2}\\
C & 0\
\end{pmatrix}
\begin{pmatrix}
p\\
q
\end{pmatrix},
\end{equation}
where $B$ and $C$ are constant symmetric matrices. This Hamiltonian system is oscillating on an interval $(0,\varepsilon)$ if and only if there exists at least one real eigenvalue $\lambda$ of the matrix $BC$, s.t. $\lambda<-1/4$.
\end{lemma}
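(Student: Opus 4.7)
The plan is to reduce the Euler-type singular system to a constant-coefficient autonomous Hamiltonian system and to characterise oscillation via its spectrum, generalising the scalar argument of Section~\ref{sec:easy_case}.

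I would introduce the new time $s = \ln \tau$ together with the rescaled variables $\tilde p(s) = \tau^{1/2} p(\tau)$ and $\tilde q(s) = \tau^{-1/2} q(\tau)$. A direct computation converts \eqref{eq:lin_ham} into the autonomous system
$$
\frac{d}{ds}\begin{pmatrix}\tilde p \\ \tilde q\end{pmatrix} = \tilde M \begin{pmatrix}\tilde p \\ \tilde q\end{pmatrix}, \qquad \tilde M = \begin{pmatrix} \tfrac{1}{2} I_n & B \\ C & -\tfrac{1}{2} I_n \end{pmatrix},
$$
on the half-line $s \in (-\infty, \ln \varepsilon]$, and one checks that $\tilde M^T J + J \tilde M = 0$, so $\tilde M \in \aSp(2n)$. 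The fibre rescaling is symplectic for each $\tau > 0$ and the time reparametrisation leaves every Lagrangian trajectory invariant as a subset of $L(n)$, so the Maslov index of each Lagrangian curve is preserved; in particular, oscillation of the original system on $(0,\varepsilon)$ is equivalent to oscillation of the autonomous system as $s\to-\infty$. Writing the eigenvalue equation $\tilde M(\tilde p_0,\tilde q_0)^T=\nu(\tilde p_0,\tilde q_0)^T$ as $B\tilde q_0=(\nu-\tfrac12)\tilde p_0$ and $C\tilde p_0=(\nu+\tfrac12)\tilde q_0$ yields $BC\tilde p_0=(\nu^2-\tfrac14)\tilde p_0$. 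Hence the eigenvalues of $\tilde M$ are exactly $\pm\sqrt{\lambda+\tfrac14}$ as $\lambda$ ranges over the eigenvalues of $BC$, and such a $\nu$ is purely imaginary iff $\lambda$ is real with $\lambda<-\tfrac14$.

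The final step is the classical principle that an autonomous linear Hamiltonian flow on $L(n)$ oscillates on an infinite half-line if and only if its generator has at least one purely imaginary eigenvalue. For the ``if'' direction I would restrict $\tilde M$ to the real invariant symplectic $2$-plane associated with a purely imaginary pair $\pm i\omega$: the flow there is a rigid rotation winding infinitely many times on $L(1)=\P^1$, and Corollary~\ref{cor_sturm} propagates this scalar oscillation to the full system. For the ``only if'' direction I would decompose $\R^{2n}$ into the real primary invariant symplectic subspaces of $\tilde M$ and invoke the classification of phase portraits of linear Hamiltonian flows on the Lagrangian Grassmanian from~\cite{riccati}: when no elliptic component is present, each hyperbolic or loxodromic block admits fixed attracting and repelling Lagrangian planes with only bounded winding between them, so the Maslov index along any Lagrangian trajectory is finite. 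The hard part will be this non-oscillation direction, namely controlling the loxodromic (complex, not purely imaginary) quadruples so that their rotational component does not accumulate infinitely many Maslov crossings on $L(n)$, and packaging the resulting block decomposition within the comparison framework of Corollary~\ref{cor_sturm}.
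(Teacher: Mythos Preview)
Your reduction is exactly the paper's: the same symplectic rescaling $\tilde p=\tau^{1/2}p$, $\tilde q=\tau^{-1/2}q$ together with the time change $s=\ln\tau$ produces the autonomous Hamiltonian system with matrix $\tilde M=\begin{pmatrix}\tfrac12 I_n & B\\ C & -\tfrac12 I_n\end{pmatrix}$, and your eigenvalue computation $\nu^2-\tfrac14\in\mathrm{spec}(BC)$ coincides with the paper's block-determinant calculation $\det(-JH-\lambda I_{2n})=\det((\lambda^2-\tfrac14)I_n-BC)$.

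The only divergence is in the last step. The paper does not argue the spectral dichotomy by hand: it simply invokes Theorem~\ref{thm_luca} (quoted from~\cite{luca}), which states outright that a linear autonomous Hamiltonian system oscillates on an unbounded interval if and only if its generator has a purely imaginary eigenvalue. Your proposal instead sketches a direct proof of this fact via invariant symplectic planes and the phase-portrait classification of~\cite{riccati}. That route is viable, but one remark: your appeal to Corollary~\ref{cor_sturm} to ``propagate'' oscillation from a $2$-dimensional invariant plane to the full system is misplaced---that corollary compares two \emph{different} Hamiltonians $H_1\leq H_2$ on the same space, it says nothing about passing from a subspace to the ambient space. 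The correct mechanism is rather that an invariant symplectic $2$-plane carrying a rotation forces infinitely many intersections of $\Phi(t)\Lambda$ with any reference plane $\Pi$ meeting that $2$-plane, hence infinite Maslov index directly. With that fix (or simply by citing Theorem~\ref{thm_luca} as the paper does) your argument is complete.
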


This result is a consequence of the following theorem proven in~\cite{luca}. 
\begin{theorem}
\label{thm_luca}
A linear autonomous Hamiltonian system
 $$
\frac{d}{dt}\begin{pmatrix}
p\\
q
\end{pmatrix}
=
-JH
\begin{pmatrix}
p\\
q
\end{pmatrix},
$$
is oscillating on an unbounded interval if and only if the matrix $-JH$ has a purely imaginary eigenvalue. 
\end{theorem}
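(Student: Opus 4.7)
The plan is to reduce the oscillation question to the spectral geometry of the Hamiltonian matrix $-JH\in\aSp(2n)$. Because this matrix lies in the symplectic Lie algebra, its spectrum is invariant under $\lambda\mapsto-\lambda$ and under complex conjugation, so the generalized eigenspaces group into symplectically invariant blocks of four types: quartets $\{\pm\lambda,\pm\bar\lambda\}$ with $\lambda$ of non-zero real and imaginary part, real pairs $\{\pm\lambda\}$, purely imaginary pairs $\{\pm i\omega\}$, and a zero-eigenvalue block. These blocks are mutually skew-orthogonal and $\R^{2n}$ decomposes as their symplectic direct sum, so the flow on $L(n)$ factors accordingly; in particular, the full system oscillates if and only if at least one block oscillates. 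This reduces the proof to a block-by-block analysis.

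For the necessity direction, assume $-JH$ has an eigenvalue $\pm i\omega$ with $\omega>0$. Restricting to the associated invariant block $V$, one finds inside it a two-dimensional invariant symplectic subspace on which $\Phi(t)|_V$ acts as a genuine rotation by angle $\omega t$ (any nilpotent Jordan factor survives as a polynomial shear but does not destroy the underlying rotation). The Lagrangian Grassmanian of a two-dimensional symplectic space is $\P^1$, so the trajectory of any Lagrangian of this block traverses any given reference point infinitely often as $t\to\infty$. By Theorem~\ref{thm:comparison} applied on arbitrarily long subintervals, this produces an unbounded Maslov index, so the system oscillates on any unbounded interval.

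For the sufficiency direction, assume every eigenvalue of $-JH$ has non-zero real part. Let $\Delta_+$ and $\Delta_-$ denote the sums of generalized eigenspaces whose eigenvalues have positive and negative real part, respectively. A standard Hamiltonian-matrix argument yields $\Delta_\pm\in L(n)$ and $\Delta_+\pitchfork\Delta_-$, and moreover that the quadratic form $\langle\cdot,H\cdot\rangle$ restricted to $\Delta_-$ has a definite sign; this is verified on the symplectic normal forms of real hyperbolic pairs and of complex quartets. Taking $\Delta=\Delta_-$ as reference plane, the trajectory $\Phi(t)\Lambda$ of any Lagrangian $\Lambda$ converges to $\Delta_+$ as $t\to+\infty$, so it is transversal to $\Delta_-$ for all sufficiently large $t$. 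Consequently, by Theorem~\ref{thm:comparison}, $\Mi_{\Delta_-}\Phi(t)\Lambda$ equals $\pm$ the sum of dimensions $\dim(\Delta_-\cap\Phi(t)\Lambda)$, which is a finite sum, giving non-oscillation. The remaining case of non-trivial zero-eigenvalue Jordan blocks yields polynomial-in-$t$ growth on the corresponding invariant symplectic block and produces only finitely many isolated intersections with any reference plane, so it too is non-oscillating.

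The main obstacle is establishing the sign-definiteness of $H|_{\Delta_-}$ on the hyperbolic blocks, which forces a passage to the symplectic (Williamson) normal form of a Hamiltonian matrix and a case analysis over real saddle pairs and complex quartets; a secondary technical point is ensuring that a non-trivial Jordan structure at a purely imaginary eigenvalue does not suppress the rotation that drives the infinite Maslov index, which is handled by the symplectic normal form of elliptic blocks.
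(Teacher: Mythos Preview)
The paper does not prove this theorem at all: it quotes it as a known result from \cite{luca} and immediately uses it to prove Lemma~\ref{lem_osc}. So there is no paper proof to compare against, and your sketch is an attempt to supply what the paper simply imports.

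Your overall architecture (spectral decomposition into symplectic blocks, reduce to block-wise analysis, rotation for imaginary pairs, hyperbolic attractor for blocks with nonzero real part) is the right one and would lead to a correct proof. One point is much easier than you make it: the restriction $H|_{\Delta_-}$ is not merely sign-definite, it is identically zero, and this needs no Williamson normal form. Indeed $\Delta_-$ is both Lagrangian and invariant under $-JH$; for $v\in\Delta_-$ one has $\langle v,Hv\rangle=\langle v,J(-JH)v\rangle=-\sigma(v,(-JH)v)=0$ because $(-JH)v\in\Delta_-$ and $\sigma|_{\Delta_-}=0$. So what you call the ``main obstacle'' evaporates, and Theorem~\ref{thm:comparison} applies trivially with $\Delta=\Delta_-$.

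The genuine soft spots are precisely the places you pass over quickly. First, when there is a non-trivial Jordan block at a purely imaginary eigenvalue $\pm i\omega$, the actual eigenspace may be isotropic rather than symplectic, so the sentence ``one finds inside it a two-dimensional invariant symplectic subspace on which $\Phi(t)$ acts as a genuine rotation'' is not justified as written; you need the symplectic normal form of elliptic blocks (or an argument via the mean winding number) to see that the rotation still forces infinite Maslov index. Second, the zero-eigenvalue case cannot be absorbed into the $\Delta_\pm$ argument (those spaces are then too small to be Lagrangian), and ``polynomial growth gives finitely many intersections'' is only a heuristic: you must argue that the determinant detecting $\dim(\Delta\cap\Phi(t)\Lambda)>0$ is a polynomial in $t$ which is not identically zero for a generic choice of $\Lambda$ and $\Delta$. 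Both gaps are fillable, but they are where the actual work of the proof lies, not in the sign of $H|_{\Delta_-}$.
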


\begin{proof}[Proof of the Lemma~\ref{lem_osc}]
We do another symplectic transformation of the form
$$
\begin{pmatrix}
p \\
q
\end{pmatrix}
\mapsto
\begin{pmatrix}
\tau^{-1/2} & 0 \\
0 & \tau^{1/2}
\end{pmatrix}
\begin{pmatrix}
p \\
q
\end{pmatrix}
$$
The transformation is smooth for $\tau>0$ and therefore oscillating property is preserved. Our Hamiltonian system then becomes
$$
\tau\frac{d}{dt}\begin{pmatrix}
p \\
q
\end{pmatrix} = 
\begin{pmatrix}
\frac{1}{2}\id_n & B\\
C & -\frac{1}{2}\id_n
\end{pmatrix}
\begin{pmatrix}
p\\
q
\end{pmatrix}
= -JH\begin{pmatrix}
p \\
q
\end{pmatrix}
$$
Let us perform a change of time variable
$$
s = \ln \tau.
$$
Then we obtain a linear autonomous Hamiltonian system of the form
$$
\frac{d}{ds}\begin{pmatrix}
p \\
q
\end{pmatrix} = 
-JH
\begin{pmatrix}
p\\
q
\end{pmatrix}
$$
Note that the change of time that we have made, maps the bounded interval $(0,\varepsilon)$ to an unbounded one. So by the Theorem~\ref{thm_luca} it just remains to compute the eigenvalues of the matrix $-JH$, i.e. to solve
$$
\det(-JH - \lambda\id_{2n})=0.
$$
In this case the diagonal blocks are multipliers of the identity and hence commute with all the other blocks. Under this assumption it is easy to show that
$$
\det(-JH - \lambda\id_{2n})=\det\left( \left( \lambda^2 -\frac{1}{4} \right)\id_n -BC\right).
$$
If a matrix $-JH$ has a pair of purely complex eigenvalues $\lambda= \pm i b$, we obtain that the matrix $BC$ has an eigenvalue
$-b^2-1/4 < -1/4$. It is obvious that the converse holds as well.  So the result follows from Theorem~\ref{thm_luca}.
\end{proof}

Finally we need the following fact proven in~\cite{horn}:
\begin{theorem}
\label{thm_spectrum}
Let $B,C$ be two constant symmetric matrices, s.t. one of them is semidefinite. Then the spectrum of $BC$ is real.
\end{theorem}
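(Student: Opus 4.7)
The plan is to reduce to the positive definite case via a similarity transformation and then handle the degenerate (merely semidefinite) case by continuity.

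First I would do some trivial reductions. Because $B$ and $C$ are square of the same size, $BC$ and $CB$ share the same characteristic polynomial, hence $\sigma(BC) = \sigma(CB)$. Swapping $B$ and $C$ if necessary I may therefore assume that \emph{$B$} is the semidefinite one. Replacing $B$ with $-B$ merely negates every eigenvalue of $BC$, so without loss of generality $B$ is positive semidefinite.

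In the positive definite case the argument is direct. The unique symmetric positive square root $B^{1/2}$ is invertible, and the identity
\[
B^{-1/2}(BC)B^{1/2} \;=\; B^{1/2}\,C\,B^{1/2}
\]
shows that $BC$ is similar to $B^{1/2}CB^{1/2}$. Since $C=C^{T}$ and $B^{1/2}=(B^{1/2})^{T}$, the right-hand side is symmetric, hence has a real spectrum, and so does $BC$.

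The main step is to lift this to the degenerate case where $B$ has a nontrivial kernel. I would use a perturbation argument: set $B_{\varepsilon} := B + \varepsilon I$ for $\varepsilon > 0$. Each $B_{\varepsilon}$ is symmetric positive definite, so by the previous paragraph every eigenvalue of $B_{\varepsilon} C$ is real. The characteristic polynomial
\[
p_{\varepsilon}(\lambda) \;=\; \det\bigl(\lambda I - B_{\varepsilon} C\bigr)
\]
depends polynomially on $\varepsilon$, so its coefficients converge to those of $p_{0}(\lambda) = \det(\lambda I - BC)$ as $\varepsilon\to 0^{+}$. Because the roots of a monic polynomial depend continuously on its coefficients as an unordered multiset in $\mathbb{C}$, and $\mathbb{R}$ is closed in $\mathbb{C}$, the limit roots, i.e.\ the eigenvalues of $BC$, must be real as well.

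The only subtle point is the continuity of roots under the limit $\varepsilon \to 0^{+}$: a priori a real eigenvalue of $B_{\varepsilon}C$ could split into a conjugate pair in the limit, but this is ruled out precisely because $\mathbb{R}$ is closed and the roots vary continuously. Everything else is routine linear algebra, so I do not expect any serious obstacle.
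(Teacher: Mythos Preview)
Your argument is correct: the reduction to $B\succeq 0$, the similarity $BC\sim B^{1/2}CB^{1/2}$ in the definite case, and the continuity-of-roots argument in the semidefinite case are all sound. The paper itself does not prove this statement; it simply quotes it as a known fact from the reference \cite{horn}, so there is no ``paper's own proof'' to compare against.

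As a minor remark, you can avoid the perturbation step altogether: for $B\succeq 0$ with (possibly singular) square root $B^{1/2}$, the matrices $BC=B^{1/2}(B^{1/2}C)$ and $B^{1/2}CB^{1/2}=(B^{1/2}C)B^{1/2}$ have the same characteristic polynomial (since $MN$ and $NM$ always do for square $M,N$), and the latter is symmetric. This gives the real spectrum directly without any limiting argument.
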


\begin{proof}[Proof of the Theorem~\ref{thm_kneser}]
We assume that $B(\tau)< 0$ for sufficiently small $\tau>0$. The case $B(\tau)> 0$ is proven in a similar way. In this case the corresponding Hamiltonian $H$ is positive semidefinite on the horizontal plane $\Sigma$ (the $q$-plane).

1) Let us start with the case $m = 2$. We know by the previous theorem that all the eigenvalues of $B(0)C(0)$ are real, and let us assume first that the minimum one is strictly less then $-1/4$. We define
\begin{align*}
B_1(\tau) & = B(0)+\varepsilon\id_n,  & B_2(\tau) &= B(\tau),\\
C_1(\tau) & = C(0)-\varepsilon\id_n,  & C_2(\tau) &= C(\tau).
\end{align*}

Since we will apply Theorem~\ref{lem_osc} several times it is worth to note that for the Hamiltonians that we study, we have
$$
H_1(\tau) \leq H_2(\tau), \quad \forall \tau >0 \quad \iff \quad \begin{array}{l}
B_1(\tau) - B_2(\tau) \geq 0,\\
C_2(\tau) - C_1(\tau) \geq 0,
\end{array} \quad \forall \tau>0. 
$$

By assumption of the theorem and Lemma~\ref{lem_osc} the Hamiltonian system \eqref{eq:lin_ham} with $B=B(0)$ and $C=C(0)$ is oscillating. This implies that a system of the form with $B=B(0)+\varepsilon\id_n$ and $C=C(0)-\varepsilon\id_n$ must be oscillating as well. Indeed, eigenvalues of $BC$ are solutions of the characteristic equation whose coefficients depend continuously on the coefficients of matrices $B,C$. Therefore a small perturbation of matrices produces a small change in the eigenvalues of $BC$. But we have chosen such a perturbation in a way, that according to Theorem~\ref{thm_spectrum}, the spectrum of $BC$ remains real. So all the eigenvalues shift on the real axis, and if we choose $\varepsilon>0$ small enough, the minimum eigenvalue of the perturbed matrix will stay strictly smaller then $-1/4$ and the corresponding Hamiltonian system \eqref{eq:lin_ham} stays oscillating by Lemma~\ref{lem_osc}.

So we can use the implication 2) of Corollary~\ref{cor_sturm}. By smoothness assumption, indeed, for sufficiently small times $H_1(\tau) \leq H_2(\tau)$.  

2) The non-oscillating case for $m=2$ is proven using exactly the same argument and matrices
\begin{align*}
B_1(\tau) & = B(\tau),  & B_2(\tau) &= B(0)-\varepsilon\id_n   ,\\
C_1(\tau) & = C(\tau)   & C_2(\tau) &= C(0)+\varepsilon\id_n.
\end{align*}

3) The case $0\leq m<2$ is now just a consequence of what we have proven so far. Indeed, we can consider the Hamiltonian as having a singularity with $m=2$ and with a new matrix $\hat{B}(\tau)=\tau^{2-m}B(\tau)$. Then $\hat{B}(0)C(0) = 0$ and all the eigenvalues are zero. Hence the system is not oscillating.

4) Let us now assume that $m>2$. We first we apply a symplectic transform
$$
\begin{pmatrix}
R & 0 \\
0 & R^T
\end{pmatrix},
$$
where $R\in \gSO(n)$. Then in the new coordinates $B(\tau)$ and $C(\tau)$ will be replaced by the same matrices conjugated with $R$. Let us choose $R$, s.t. in the new coordinates $B(0)$ is diagonal. 

We take 
\begin{align*}
B_1(\tau) & = -k\tau^{m-2}\begin{pmatrix}
\id_l & 0 \\
0 & 0
\end{pmatrix}  & B_2(\tau) &= B(\tau),\\
C_1(\tau) & = C(\tau),  & C_2(\tau) &=  C(\tau),
\end{align*}
where $k$ is some constant. If $C(0)$ is non-negative semi-definite on the eigenspace of $B(0)$ that corresponds to the non-zero eigenvalue, then by taking $k$ large enough, we find that the system one is oscillating by Lemma~\ref{lem_osc}. We also have that $B_1(\tau) - B_2(\tau) $ is non-negative for sufficiently small $\tau>0$. Thus we can use the implication 2) of Corollary~\ref{cor_sturm} to deduce that the second system is going to be oscillating as well.

If $C(0)$ is negative definite on the eigenspace of $B(0)$, we repeat the proof with exactly the same $B_1$, $C_1$. In this case we know from what we have proven already, that the system one is not oscillating. Then the result follows from the implication 3) from Corollary~\ref{cor_sturm} with $\Lambda=\Pi$ being the vertical plane (the $p$-plane).

\end{proof}

Let us apply the theorem to our case. We are not oscillating for $0\leq m < 2$ and for $m=2$ whenever the eigenvalues of the
$$
B(0)C(0) = \begin{pmatrix}
\frac{1}{b_2} & 0 \\
0 & 0
\end{pmatrix}\begin{pmatrix}
\sigma(e_1(0),\dot{e}_1(0)) & 0 \\
0 & \sigma(e_2(0),\dot{e}_2(0))
\end{pmatrix} = 
\begin{pmatrix}
\sigma(X(0),\dot{X}(0))/b_2 & 0 \\
0 & 0
\end{pmatrix}
$$
are bigger then $-1/4$, i.e. whenever the only non-zero element above is bigger then $-1/4$. If $m>2$ then we must have $\sigma(X(0),\dot X(0))<0$.

So from now on, we assume that our Hamiltonian system is non-oscillating, ensuring that the right limit $\cL_{0+}$ exists.

\section{Computing the jump when $n\geq 2$}
\label{sec:jumpg2}
We are now ready to make the first step and compute the jump of the Jacobi curve. We will give three similar but separate proofs for $m =1$, $m=2$ and $m\geq 3$. For $m=1$ we will do this using the general theory of linear ODEs with regular singular points. For $m=2$ and $m\geq 3$ the strategy of the proof is going to be very similar to the proof of the Kneser theorem in the previous section. Namely we first look at some model examples and then we use the comparison theory of Riccati equations, to obtain the result in the most general case. 

\subsection{Jump for $m=1$}
\label{subsec:jump1}
Let $\lambda=(p,q)$. We rewrite the system \eqref{eqnormal_jacobi} in the following form
\begin{equation}
\label{eqcase1}
\dot{\lambda} = \left( \frac{H_{-1}}{\tau} + H(\tau) \right)\lambda,
\end{equation}
where $H(\tau)$ is an analytic matrix function and as can be easily seen
$$
H_{-1} = \begin{pmatrix}
0 & 0 & \frac{1}{b_1} & 0 \\
0 & 0 & 0 & 0 \\
0 & 0 & 0 & 0 \\
0 & 0 & 0 & 0
\end{pmatrix}.
$$
This matrix up to a reordering of coordinates is in its Jordan normal form, and all of its eigenvalues are zero. Therefore by a well known theorem~\cite{ode}, the fundamental matrix $\Phi(\tau)$ of the system \eqref{eqcase1} can be written as
$$
\Phi(\tau) = P(\tau)\tau^{H_{-1}},
$$
where $P(\tau)$ is an analytic matrix function with $P(0) = \id_{2n}$. A power series expansion can be obtained by plugging this solution into \eqref{eqcase1} and expanding all the analytic functions into their Taylor series. It is easy to check that
$$
\tau^{H_{-1}} = \begin{pmatrix}
1 & 0 & \frac{\ln\tau}{b_1} & 0 \\
0 & 1 & 0 & 0 \\
0 & 0 & 1 & 0 \\
0 & 0 & 0 & 1 
\end{pmatrix}
$$
Let $\cL_0$ be the $\cL$-derivative at moment of time $\tau=0$. The flow of the Hamiltonian system $\Phi(\tau)$ induces a flow on the Lagrangian Grassmanian $L(2)$ that we denote using the same symbol. As we have discussed previously the Jacobi curve $\cL_\tau$ is going to be a pointwise limit of the solutions of the Jacobi DE on $L(2)$ with boundary conditions $\Lambda(\varepsilon)=\cL_0$. Since we know explicitly the flow, we can write the solution of this boundary problem as
\begin{equation}
\label{eqjacobi_limit}
\cL_\tau = \lim_{\varepsilon\to 0+} \Phi(\tau)\Phi^{-1}(\varepsilon) \cL_0.
\end{equation}
We note that $\Phi(\tau)$ is smooth and invertible for $\tau>0$. Therefore we can exchange the limit with $\Phi(\tau)$, and we just need to compute the limit of $\Phi^{-1}(\varepsilon)\cL_0$. To do this we use a concrete representation of Lagrangian planes as span of a couple of vectors like in Section~\ref{sec:sympl}
$$
\Phi^{-1}(\varepsilon)\cL_0 = \begin{bmatrix}
\lambda_1(\varepsilon) & \lambda_2(\varepsilon)
\end{bmatrix}.
$$

Let us find the limits of $\lambda_i(\varepsilon)$ as $\varepsilon\to 0$. We have 
$$
(\varepsilon^{H_{-1}})^{-1} =\begin{pmatrix}
1 & 0 & -\frac{\ln\varepsilon}{b_1} & 0 \\
0 & 1 & 0 & 0 \\
0 & 0 & 1 & 0 \\
0 & 0 & 0 & 1 
\end{pmatrix}.
$$
Assume that $X(0) \in \cL_0$. Then as we have seen in the Example~\ref{ex2} we can assume
\begin{equation}
\label{eqjacobicase1}
\cL_0 = \begin{bmatrix}
1 & 0\\
0 & y_2\\
0 & 0\\
0 & w_2
\end{bmatrix}
\end{equation}

Then since $P(0) = \id_{2n}$, we obtain
$$
\lim_{\varepsilon \to 0+}\Phi^{-1}(\varepsilon)\cL_0 = \lim_{\varepsilon\to 0+} (\varepsilon^{H_{-1}})^{-1}P^{-1}(\varepsilon) \begin{bmatrix}
1 & 0 \\
0 & y_2 \\
0 & 0 \\
0 & w_2
\end{bmatrix} = \begin{bmatrix}
1 & 0 \\
0 & y_2 \\
0 & 0 \\
0 & w_2
\end{bmatrix},
$$
because $(\varepsilon^{H_{-1}})^{-1}$ acts as the identity on $\cL_0$. For the same reason
$$
\lim_{\tau \to 0+} \Lambda(\tau) = \begin{bmatrix}
1 & 0 \\
0 & y_2 \\
0 & 0 \\
0 & w_2
\end{bmatrix}
$$
and so $\cL_{0+} = \cL_0$ and the Jacobi curve is actually continuous. 

If $X(0)\notin\cL_0$, then again from Example~\ref{ex2} we know that we can take
\begin{equation}
\label{eqjacobicase2}
\cL_0 = 
\begin{bmatrix}
\lambda_1(\varepsilon) & \lambda_2(\varepsilon)
\end{bmatrix}
=
\begin{bmatrix}
x_1 & x_2\\
y_1 & y_2\\
1 & 0\\
w_1 & w_2
\end{bmatrix}
\end{equation}
Similarly to the previous case we find that
$$
\lim_{\varepsilon \to 0+}\lambda_2(\varepsilon) = \begin{pmatrix}
x_2\\
y_2\\
0 \\
z_2
\end{pmatrix}.
$$
Let us see what happens to the limit of the first vector. We have
$$
\lim_{\varepsilon \to 0+}\lambda_1(\varepsilon) = \lim_{\varepsilon\to 0+} \Phi^{-1}(\varepsilon)
\begin{pmatrix}
x_1\\
y_1\\
1\\
w_1
\end{pmatrix} = \lim_{\varepsilon\to 0+}
\begin{pmatrix}
x_1-\dfrac{\ln\varepsilon}{b_1}\\
y_1\\
1\\
w_1
\end{pmatrix}
$$
which is equal to infinity. As we have said before a representation of a Lagrangian plane as a span of two vectors is not unique. We can scale them as we want as we take the limit. So we take
$$
\lim_{\varepsilon\to 0+} -\frac{b_1}{\ln \varepsilon}\lambda_1(\varepsilon)  = 
\begin{pmatrix}
1\\
0\\
0\\
0
\end{pmatrix}.
$$
So
$$
\lim_{\varepsilon \to 0+}\Phi^{-1}(\varepsilon)\cL_0 = \begin{bmatrix}
1 & x_2\\
0 & y_2 \\
0 & 0 \\
0 & w_2
\end{bmatrix}.
$$
Then as before we find that
$$
\lim_{\tau\to 0+}\Lambda(\tau) = \begin{bmatrix}
1 & x_2\\
0 & y_2 \\
0 & 0 \\
0 & w_2
\end{bmatrix}
=
\begin{bmatrix}
1 & 0\\
0 & y_2 \\
0 & 0 \\
0 & w_2
\end{bmatrix}.
$$

So summarizing everything we have done in a more invariant manner the jump can be computed as follows. Given $\cL_0$, the new $\cL$-derivative $\cL_{0+}$ is going to be a direct sum of $\cL_0\cap X(0)^\angle$ and $X(0)$. But this is by definition $\cL_0^{X(0)}$. The goal of the following subsections is to prove the same for $m\geq 2$.

\subsection{Model examples for $m=2$}
\label{subsec:model2}

For $m\geq 2$ we proceed in a different way. One can reduce by a change of variables the system \eqref{eqnormal_jacobi} to a system with a regular singular point at $\tau=0$. Thus all the methods from the theory of linear systems of ODE's can be used. But these techniques work well under some non-resonance conditions. In our case we can use techniques from Hamiltonian dynamics to arrive at the results even in the presence of resonances. First we prove the result for some model problems similarly as we have done in the case $m = 1$. Then we apply Riccati comparison theorems, to prove the general result.

For $m = 2$ we choose the following Hamiltonian systems as our models
\begin{equation}
\label{eqmodel2}
\frac{d}{dt}\begin{pmatrix}
p\\
q
\end{pmatrix}
=
\begin{pmatrix}
0 & 0 & \frac{b_{11}}{\tau^2} & 0 \\
0 & 0 & 0 & b_{22} \\
c_{11} & 0 & 0 & 0 \\
0 & c_{22} & 0 & 0
\end{pmatrix}
\begin{pmatrix}
p\\
q
\end{pmatrix},
\end{equation}
where $b_{ii},c_{ii}$ are constants, $b_{11}\neq 0$ (or else there is no singularity) and $c_{11}\neq 0$ (because we have $\sigma(X(0),\dot X(0))\neq 0$). First of all we notice that this system splits into two invariant sub-systems
\begin{align}
\frac{d}{dt}\begin{pmatrix}
p_1\\
q_1
\end{pmatrix}
&=
\begin{pmatrix}
0  & \frac{b_{11}}{\tau^2} \\
c_{11} & 0 
\end{pmatrix}
\begin{pmatrix}
p_1\\
q_1
\end{pmatrix}, \label{eqalpha21}\\
\frac{d}{dt}\begin{pmatrix}
p_2\\
q_2
\end{pmatrix}
&=
\begin{pmatrix}
0  & b_{22} \\
c_{22} & 0 
\end{pmatrix}
\begin{pmatrix}
p_2\\
q_2
\end{pmatrix}.\label{eqalpha22}
\end{align}
We denote by $\Phi_i(t)$ the corresponding fundamental matrices. Without any loss of generality, we can assume that $\Phi_2(0) = \id_2$. In order to find $\Phi_1$, we do a symplectic change of variables
$$
\begin{pmatrix}
\tilde{p}_1\\
\tilde{q}_1
\end{pmatrix}
=
\begin{pmatrix}
\tau^{1/2} & 0 \\
0 & \tau^{-1/2}
\end{pmatrix}
\begin{pmatrix}
 p_1\\
 q_1
\end{pmatrix}.
$$ 
Then the first system is transformed to
$$
\frac{d}{dt}\begin{pmatrix}
\tilde p_1\\
\tilde q_1
\end{pmatrix}
=
\frac{1}{\tau}\begin{pmatrix}
1/2 & b_{11}\\
c_{11} & -1/2
\end{pmatrix}
\begin{pmatrix}
\tilde p_1\\
\tilde q_1
\end{pmatrix} = 
\frac{Y}{\tau}
\begin{pmatrix}
\tilde p_1\\
\tilde q_1
\end{pmatrix}
$$
which is a simple linear system with a regular singular point. Therefore the fundamental solution $\Phi_1$ is the following matrix function
$$
\Phi_1(\tau) = \begin{pmatrix}
\tau^{-1/2} & 0 \\
0 & \tau^{1/2}
\end{pmatrix}\tau^Y
$$
or in a more detailed form
$$
\Phi_1(\tau) = \begin{pmatrix}
\dfrac{\tau^{\frac{-1-\Delta}{2}}(-1+\Delta+(1+\Delta)\tau^\Delta)	}{2\Delta} & \dfrac{b_{11}\tau^{\frac{-1-\Delta}{2}}(-1+\tau^\Delta)}{\Delta} \\
\dfrac{c_{11}\tau^{\frac{1-\Delta}{2}}(-1+\tau^\Delta)}{\Delta} & \dfrac{\tau^{\frac{1-\Delta}{2}}(1+\Delta+(-1+\Delta)\tau^\Delta)	}{2\Delta}
\end{pmatrix},
$$
where $\Delta = \sqrt{1+4b_{11}c_{11}}$. Under the non-oscillation assumption we have $\Delta>0$. It is easy to check that $\det \Phi_1(\tau) = \det \Phi_2(\tau) \equiv 1$, so the inverse matrix of $\Phi(\tau)$ can be computed easily. 

The Jacobi curve is given by \eqref{eqjacobi_limit} and as in the previous subsection the fundamental matrix $\Phi(\tau)$ is smooth for $\tau>0$, so we can exchange it with the limit. So first of all we need to find the limit
$$
\lim_{\varepsilon\to 0+} \Phi^{-1}(\varepsilon)\cL_0.
$$
As in the previous case we are going to separate two situations: when $X(0) \in \cL_0$ and when $X(0) \notin \cL_0$. 

As before if $X(0) \in \cL_0$ we can assume that $\cL_0$ is given by \eqref{eqjacobicase1}. Then the first and the second vector lie in its own invariant subspace. For example since $\Phi_2(0) = \id_2$, we immediately get that
$$
\lim_{\tau\to 0+} \Phi(\tau)\lim_{\varepsilon\to 0+} \Phi^{-1}(\varepsilon)\begin{pmatrix}
0\\
y_2\\
0\\
w_2
\end{pmatrix} =\begin{pmatrix}
0\\
y_2\\
0\\
w_2
\end{pmatrix}.
$$
Let us see what happens to the first vector. We have
$$
\Phi^{-1}(\varepsilon)\begin{pmatrix}
1\\
0\\
0\\
0
\end{pmatrix}
=
\begin{pmatrix}
\dfrac{\varepsilon^{\frac{1-\Delta}{2}}(1+\Delta+(-1+\Delta)\varepsilon^\Delta)	}{2\Delta}\\
0\\
-\dfrac{c_{11}\varepsilon^{\frac{1-\Delta}{2}}(-1+\varepsilon^\Delta)}{\Delta}\\
0
\end{pmatrix}.
$$
Then we find that
$$
\lim_{\varepsilon\to 0+} \left[\Phi^{-1}(\varepsilon)\begin{pmatrix}
1\\
0\\
0\\
0
\end{pmatrix}\right] = 
\lim_{\varepsilon\to 0+} \left[\varepsilon^{-\frac{1-\Delta}{2}}\Phi^{-1}(\varepsilon)\begin{pmatrix}
1\\
0\\
0\\
0
\end{pmatrix}\right] = 
\begin{bmatrix}
\frac{1+\Delta}{2\Delta} \\
0\\
\frac{c_{11}}{\Delta}\\
0
\end{bmatrix}\in \lim_{\varepsilon\to 0+}\Phi^{-1}(\varepsilon)\cL_0
$$
and
\begin{equation}
\label{eqn1}
\lim_{\tau\to 0+}\left[\tau^{\frac{1+\Delta}{2}}\Phi(\tau)\begin{pmatrix}
\frac{1+\Delta}{2\Delta} \\
0\\
\frac{c_{11}}{\Delta}\\
0
\end{pmatrix}\right] = \begin{bmatrix}
1\\
0\\
0\\
0
\end{bmatrix},
\end{equation}
which means that the limit is up to a constant the vector $X(0)$. Thus in this case $\cL_{0+}=\cL_0$ and the Jacobi curve is actually continuous as expected. 

We now look at the situation when $X(0) \notin \cL_0$. Then $\cL_0$ can be assumed to be of the form \eqref{eqjacobicase2}. We consider the vectors $\lambda_i(\varepsilon)$ and their projections onto the first invariant subspace
\begin{equation}
\label{eqn1n2}
n_1(\varepsilon) = \Phi_1^{-1}(\varepsilon)\begin{pmatrix}
x_1 \\
1 
\end{pmatrix},\qquad
n_2(\varepsilon) = \Phi_1^{-1}(\varepsilon)\begin{pmatrix}
x_2 \\
0
\end{pmatrix}.
\end{equation}
We have 
$$
n_1(\varepsilon) = \varepsilon^{-\frac{1+\Delta}{2}} \begin{pmatrix}
\dfrac{-2b_{11}(-1+\varepsilon^\Delta)+\varepsilon(1+\Delta +(-1+\Delta)\varepsilon^\Delta)x_1}{2\Delta}\\
\dfrac{-1+\Delta+2c_{11}\varepsilon x_1 +\varepsilon^\Delta(1-2c_{11}\varepsilon x_1 + \Delta)}{2\Delta}
\end{pmatrix}
$$
So it is clear that
$$
\lim_{\varepsilon\to 0+} \varepsilon^{\frac{1+\Delta}{2}} \lambda_1(\varepsilon) = \begin{pmatrix}
\dfrac{b_{11}}{\Delta}\\
0
\\
\dfrac{-1+\Delta}{2\Delta}\\
0
\end{pmatrix}.
$$
Then the formula \eqref{eqn1} proves that $X(0) \in \cL_{0+}$. Now we need to find an independent from $X(0)$ limit vector that would lie in $\cL_{0+}$. 

Writing down $n_2(\varepsilon)$ we get
$$
n_2(\varepsilon) = \varepsilon^{\frac{1-\Delta}{2}}\begin{pmatrix}
\dfrac{(1+\Delta+\varepsilon^\Delta(-1+\Delta))x_2}{2\Delta}\\
\dfrac{c_{11}(1-\varepsilon^\Delta)x_2}{\Delta}
\end{pmatrix}.
$$
If $0< \Delta < 1$ or $x_2 = 0$, then it is clear that
$$
\lim_{\varepsilon\to 0+} \lambda_2(\varepsilon) = \begin{pmatrix}
0\\
y_2 \\
0\\
w_2
\end{pmatrix},
$$
where we have used that $\Phi_2(0) = \id_2$. For the same reason the very same vector is going to lie in $\cL_{0+}$ and the result follows.

If $\Delta = 1$, then either $b_{11}=0$ or $c_{11} = 0$. Since we have excluded these possibilities it only remains to see what happens, when $\Delta>1$ and $x_2 \neq 0$. 

We can see that the expressions for $\lambda_i(\varepsilon)$ are just sums of power series of $\varepsilon$. Therefore it is convenient to introduce the following notation
$$
a(\varepsilon) = b(\varepsilon) \mod \varepsilon^{>0}
$$
which means that $a(\varepsilon)$ and $b(\varepsilon)$ agree modulo terms of positive degree in $\varepsilon$. Then $\cL_{0+} = \cL_0^{X(0)}$ follows from the following lemma
\begin{lemma}
Let $\Delta>1$ and $x_2\neq 0$. Then there exist constants $c_0,c_1,...,c_{l-1}$, s.t.
$$
\lambda_2(\varepsilon) - x_2\varepsilon \lambda_1(\varepsilon)\sum_{i=0}^{l-1} c_i \varepsilon^i = \begin{pmatrix}
k_1 \varepsilon^{\frac{2l+1-\Delta}{2}} \\
y_2 \\
k_2 \varepsilon^{\frac{2l+1-\Delta}{2}}\\
w_2
\end{pmatrix} \mod \varepsilon^{>0},
$$
where $k_i$ are some constants. 
\end{lemma}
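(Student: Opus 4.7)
The plan is to exploit the splitting of the normal form \eqref{eqmodel2} into the singular first subsystem \eqref{eqalpha21} and the regular second subsystem \eqref{eqalpha22}, and then to use the Frobenius structure of the former to cancel the divergent terms iteratively. The second-subspace projection is handled for free: since \eqref{eqalpha22} is regular at $\tau=0$, the map $\Phi_2^{-1}(\varepsilon)$ is analytic, so the $(\mu_2,\mu_{n+2})$-components of $\lambda_2(\varepsilon)-x_2\varepsilon\lambda_1(\varepsilon)\sum_i c_i\varepsilon^i$ equal $(y_2,w_2)^T$ modulo $\varepsilon^{>0}$ for any choice of $c_i$. What remains is to prescribe $c_0,\dots,c_{l-1}$ so that the first-subspace projection $n_2(\varepsilon)-x_2\varepsilon\, n_1(\varepsilon)\sum_i c_i\varepsilon^i$ agrees with the single term $(k_1,k_2)^T\varepsilon^{\frac{2l+1-\Delta}{2}}$ modulo $\varepsilon^{>0}$.

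To set up the cancellation, expand $n_1$ and $n_2$ in the eigenbasis $v^\pm\in\R^2$ of the Frobenius matrix $Y=\begin{pmatrix}1/2 & b_{11}\\ c_{11} & -1/2\end{pmatrix}$, whose eigenvalues are $\pm\Delta/2$. Using $\Phi_1(\tau)=\mathrm{diag}(\tau^{-1/2},\tau^{1/2})\,\tau^Y$, the explicit formulas from Section~\ref{subsec:model2} show that $\varepsilon n_1(\varepsilon)$ and $n_2(\varepsilon)$ each contain only finitely many $v^+$-direction terms, located at exponents of the form $\varepsilon^{\frac{1-\Delta}{2}+j}$ with $j\in\{0,1\}$, while all $v^-$-direction contributions sit at exponents $\geq\tfrac{1+\Delta}{2}>0$ and are thus absorbed into the $\mod\varepsilon^{>0}$ tail. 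The crucial algebraic point is that the divergent direction $v^+$ is common to both $n_1$ and $n_2$; this is not a coincidence but is forced by the characteristic identity $\Delta^2=1+4b_{11}c_{11}$, which makes the leading $v^+$-coefficient of $n_2$ collinear with the leading $v^+$-coefficient of $\varepsilon n_1$. Consequently, a suitable polynomial multiple of $\varepsilon\lambda_1$ is capable of killing the divergent part of $\lambda_2$.

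Multiplying $\varepsilon n_1$ by $\sum_{i=0}^{l-1} c_i\varepsilon^i$ produces $v^+$-direction terms at exponents $\varepsilon^{\frac{1-\Delta}{2}+k}$ for $k=0,1,\dots,l$. Matching coefficients with those of $n_2$ yields a triangular linear system in $c_0,\dots,c_{l-1}$ whose diagonal entries are all equal (up to a nonzero scalar) to the leading $v^+$-coefficient of $\varepsilon n_1$. The latter is nonzero by the explicit formula together with $b_{11}\neq 0$ and $\Delta>1$, so the system is uniquely solvable; solving it kills the first $l$ divergent $v^+$-terms in succession. The next $v^+$-term, at exponent $\varepsilon^{\frac{2l+1-\Delta}{2}}$, survives with an explicit residual scalar $\kappa\in\R$, and this yields the vector $(k_1,k_2)^T=\kappa\, v^+$ on the right-hand side; everything else is of positive $\varepsilon$-order. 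The main delicate step is precisely verifying nondegeneracy and the alignment of $v^+$ with the leading coefficient of $n_2$, both of which reduce to the same algebraic identity $\Delta^2=1+4b_{11}c_{11}$, i.e.\ to the fact that $\pm\Delta/2$ are the genuine Frobenius exponents of the singular system. The hypothesis $x_2\neq 0$ is what ensures a genuinely divergent term in $n_2$ to be cancelled in the first place.
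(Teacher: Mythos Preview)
Your argument is correct and follows essentially the same route as the paper's proof. Both handle the regular $(\mu_2,\mu_{n+2})$-components via analyticity of $\Phi_2$, then reduce to the projections $n_1,n_2$ and cancel the divergent $\varepsilon^{(1-\Delta)/2+j}$ terms iteratively; the paper simply carries out the computation explicitly (finding $c_0=(1+\Delta)/(2b_{11})$ and the recursion $c_{i+1}=x_1c_0c_i$) rather than invoking the eigenbasis $v^\pm$ of $Y$ and the abstract triangular-system language you use. Your observation that all divergent contributions lie in the single direction $v^+$ is exactly what underlies the paper's identity $x_2\varepsilon n_1(\varepsilon)=x_2\alpha(\varepsilon)+x_1\varepsilon n_2(\varepsilon)\bmod\varepsilon^{>0}$, since $\alpha(\varepsilon)$ and $n_2(\varepsilon)$ are collinear with $v^+$ by the relation $\Delta^2=1+4b_{11}c_{11}$.
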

Indeed, the vector on the left hand side is a linear span of $\lambda_1(\varepsilon)$ and $\lambda_2(\varepsilon)$. We can choose $l$ sufficiently big so that $2l+1-\Delta>0$. Then in the limit we obtain a vector $\begin{pmatrix}
0 & y_2 & 0 & w_2
\end{pmatrix}^T$, which lies in the second invariant subspace where there is no singularity at all.

\begin{proof}[Proof of the lemma]
We denote by $\lambda(\varepsilon)$ the vector on the left. It is easy to see why the second and fourth components of $\lambda(\varepsilon)$ have this form. It follows from the fact that $\Phi_2(\varepsilon)$ is an analytic matrix function with $\Phi_2(0) = \id_2$.

So it is enough to look on the projection of $\lambda(\varepsilon)$ to the singular invariant subspace. We can write
\begin{align*}
n_1(\varepsilon) &= \frac{\varepsilon^{-\frac{1+\Delta}{2}}}{\Delta}\begin{pmatrix}
b_{11}\\
\frac{-1+\Delta}{2} 
\end{pmatrix}+
\frac{\varepsilon^{\frac{1-\Delta}{2}}x_1}{\Delta}\begin{pmatrix}
\frac{1+\Delta}{2}\\
c_{11}
\end{pmatrix}\mod \varepsilon^{>0},\\
n_2(\varepsilon) &= 
\frac{\varepsilon^{\frac{1-\Delta}{2}}x_2}{\Delta}\begin{pmatrix}
\frac{1+\Delta}{2}\\
c_{11}
\end{pmatrix}\mod \varepsilon^{>0}.
\end{align*}
Let us denote 
$$
\alpha(\varepsilon) = \frac{\varepsilon^{\frac{1-\Delta}{2}}}{\Delta}\begin{pmatrix}
b_{11}\\
\frac{-1+\Delta}{2} 
\end{pmatrix}
$$
From here we see that
$$
x_2\varepsilon n_1(\varepsilon) = x_2 \alpha(\varepsilon)+
x_1\varepsilon n_2(\varepsilon)\mod \varepsilon^{>0}.
$$
We then find an expression for the projection of $\lambda(\varepsilon)$:
$$
n_2(\varepsilon) -x_2\varepsilon n_1(\varepsilon)\sum_{i=0}^{l-1} c_i\varepsilon^i=  
$$
$$
=n_2(\varepsilon) - x_2\alpha(\varepsilon)c_0 + \sum_{i=1}^{l-2} \left( x_2 \alpha(\varepsilon) c_{i} - x_1 n_2(\varepsilon)c_{i-1} \right) - \varepsilon^{l}x_1n_2(\varepsilon)c_{l-1}     \mod \varepsilon^{>0}.
$$
So it is enough to choose $c_i$ to be s.t. they solve
$$
n_2(\varepsilon) - x_2\alpha(\varepsilon)c_0 = 0 \mod \varepsilon^{>0},
$$
$$
x_2\alpha(\varepsilon)c_{i+1} - x_1n_2(\varepsilon)c_i  = 0 \mod \varepsilon^{>0}.
$$
The first equality is satisfied, if 
$$
c_0 = \frac{1+\Delta}{2b_{11}}, \qquad \text{(recall that } \Delta = \sqrt{1+4b_{11}c_{11}}).
$$
But then we can obtain an expression for $n_2(\varepsilon)$ from the first equation and plug it into the second one. We get
$$
\alpha(\varepsilon)x_2\left(c_{i+1} - x_1 c_0 c_i\right) = 0.
$$
So we simply choose recursively $c_{i+1} = x_1c_ic_0$.
\end{proof}

\subsection{Model examples for $m>2$}
\label{subsec:modelg2}
We now consider the same model as \eqref{eqmodel2} but with singularity of order $m>2$. Recall that $B_{ii},C_{ii}$ are constants and $b_{11}$, $c_{11}$ are non zero. It is convenient to define $m = 2 +\beta$. Again we have two invariant subsystems and equation \eqref{eqalpha21} has the form
\begin{align*}
\dot{p}_1 &= \frac{b_{11}}{\tau^{2+\beta}} q_1,\\
\dot{q}_1 &= c_{11} p_1.
\end{align*}
We differentiate the second equation to obtain
$$
\ddot q_{1} - \frac{b_{11}c_{11}}{\tau^{2+\beta}}q_1 = 0.
$$
If we introduce a new independent variable 
$$
y(\tau) = \frac{q_1(\tau)}{\sqrt{\tau}}
$$
and a new dependent variable
$$
s(\tau) = \frac{2\sqrt{b_{11}c_{11}}}{\beta}\tau^{\frac{-\beta}{2}},
$$
we obtain a modified Bessel equation
$$
s^2\frac{d^2y}{ds^2}+s\frac{dy}{ds}-\left( s^2+\frac{1}{\beta^2} \right)y =0.
$$
Two independent solutions of this equation are given by two modified Bessel functions $I_{\beta^{-1}}(s)$, $K_{\beta^{-1}}(s)$~\cite{bessel}. Therefore the fundamental matrix $\Phi_1(\tau)$ is given by
$$
\Phi_1(\tau) = \begin{pmatrix}
\frac{1}{c_{11}}\frac{d}{d\tau}\sqrt{\tau}I_{\beta^{-1}}(s(\tau)) & \frac{1}{c_{11}}\frac{d}{d\tau}\sqrt{\tau}K_{\beta^{-1}}(s(\tau)) \\
\sqrt{\tau}I_{\beta^{-1}}(s(\tau)) & \sqrt{\tau}K_{\beta^{-1}}(s(\tau))
\end{pmatrix}
$$
We can simplify considerably the first row using the following formulas for the derivatives of modified Bessel functions~\cite{bessel}
\begin{align*}
I'_{a}(x) &= \frac{a}{x}I_a(x)+I_{a+1}(x),\\
K'_{a}(x) &= \frac{a}{x}K_a(x)-K_{a+1}(x).
\end{align*}
After some simplifications we find that
$$
\Phi_1(\tau) = \begin{pmatrix}
-\sqrt{\frac{b_{11}}{c_{11}}}\tau^{-\frac{1+\beta}{2}}I_{\beta^{-1}+1}(s(\tau))
 & \sqrt{\frac{b_{11}}{c_{11}}}\tau^{-\frac{1+\beta}{2}}K_{\beta^{-1}+1}(s(\tau)) \\
\sqrt{\tau}I_{\beta^{-1}}(s(\tau)) & \sqrt{\tau}K_{\beta^{-1}}(s(\tau))
\end{pmatrix}.
$$

Since $\beta>0$, as $\tau \to 0+$ we get $s(\tau) \to +\infty$. Therefore we need an asymptotic expansion of modified Bessel functions as the argument goes to $+\infty$:
\begin{align*}
I_{a}(x) &\sim \sqrt{\frac{1}{2\pi x}}e^x, \qquad x\to+\infty,\\
K_{a}(x) &\sim \sqrt{\frac{\pi}{2 x}}e^{-x}, \qquad x\to+\infty.
\end{align*}
In particular we see that the limit does not depend on the parameter, and therefore for any real $a,b$ we have
\begin{align*}
&I_{a}(x) \to +\infty, & &\frac{I_a(x)}{I_b(x)} \to 1,\\
&K_{a}(x) \to 0, & &\frac{K_a(x)}{K_b(x)} \to 1,
\end{align*}
as $x\to +\infty$.

The matrix $\Phi_1(\tau)$ is invertible and smooth for $\tau>0$. From the explicit form of the equation it follows that determinant  of $\Phi_1(\tau)$ is constant. Using the asymptotics above we can the find that it is actually equal to $-\beta/(2c_{11})$. The very same asymptotics and an argument similar to the one for $m=1,2$ implies that if $X(0) \in \cL_0$, then $\cL_{0+}=\cL_0$. So we assume that $X(0) \notin \cL_0$ and consequently that $\cL_0$ is given by \eqref{eqjacobicase2}.

If $x_2 = 0$, then it is clear that
$$
\lim_{\tau \to 0+} \Phi(\tau) \lim_{\varepsilon\to 0+} \Phi^{-1}(\varepsilon) \begin{pmatrix}
0\\
y_2\\
0\\
w_2
\end{pmatrix}  = 
\begin{pmatrix}
0\\
y_2\\
0\\
w_2
\end{pmatrix}.
$$
So it remains to find a single independent vector in $\cL_{0+}$ in this case. Let us slightly abuse the notation and denote
$$
\begin{pmatrix}
x_1(\varepsilon)\\
y_1(\varepsilon)\\
z_1(\varepsilon)\\
w_1(\varepsilon)
\end{pmatrix}
=
\Phi^{-1}(\varepsilon)
\begin{pmatrix}
x_1\\
y_1\\
1\\
w_1
\end{pmatrix}.
$$
Using an explicit expression for the fundamental matrix, we find that $x_1(\varepsilon)$ is a linear combination of the modified Bessel $K$-functions and $z_1(\varepsilon)$ is a linear combination of $I$-functions. Due to the exponential behaviour of $I_a(x)$ and $K_a(x)$, we find that
$$
\lim_{\varepsilon \to 0+} \frac{1}{z_1(\varepsilon)}\Phi^{-1}(\varepsilon)
\begin{pmatrix}
x_1\\
y_1\\
1\\
w_1
\end{pmatrix} = \begin{pmatrix}
0 \\
0\\
1\\
0
\end{pmatrix}.
$$
For the same reason
\begin{equation}
\label{eqsomeeq}
\lim_{\tau \to 0+} \frac{\sqrt{c_{11}}\tau^{\frac{1+\beta}{2}}}{\sqrt{b_{11}}K_{\beta^{-1}+1}(s(\tau))}\Phi(\tau)\begin{pmatrix}
0\\
0\\
1\\
0
\end{pmatrix} = \begin{pmatrix}
1\\
0\\
0\\
0
\end{pmatrix}
\end{equation}
and we obtain that $\cL_{0+} = \cL_0^{X(0)}$.

Assume now that $x_2\neq 0$. Then we obtain by the same argument as above 
$$
\lim_{\varepsilon \to 0+} \frac{2c_{11}}{\beta x_2\sqrt{\varepsilon}I_{\beta^{-1}}(s(\varepsilon))} \Phi^{-1}(\varepsilon)\begin{pmatrix}
x_2\\
y_2\\
0\\
w_2
\end{pmatrix} = \begin{pmatrix}
0\\
0\\
1\\
0
\end{pmatrix}.
$$ 
Exploiting once more formula \eqref{eqsomeeq}, we once again find that $X(0) \in \cL_{0+}$. To find an independent vector limit let us write down explicitly the vectors $n_1(\varepsilon)$ and $n_2(\varepsilon)$ defined in \eqref{eqn1n2} of the previous subsection. We have
$$
n_1(\varepsilon) = \begin{pmatrix}
x_1(\varepsilon)\\
z_1(\varepsilon)
\end{pmatrix} =
-\frac{2c_{11}}{\beta}\begin{pmatrix}
-\frac{\sqrt{b_{11}}\varepsilon^{-\frac{1+\beta}{2}}K_{\beta^{-1}+1}(s(\varepsilon))}{\sqrt{c_{11}}} + \sqrt{\varepsilon}x_1 K_{\beta^{-1}}(s(\varepsilon))\\
-\frac{\sqrt{b_{11}}\varepsilon^{-\frac{1+\beta}{2}}I_{\beta^{-1}+1}(s(\varepsilon))}{\sqrt{c_{11}}} - \sqrt{\varepsilon}x_1 I_{\beta^{-1}}(s(\varepsilon))
\end{pmatrix},
$$
$$
n_2(\varepsilon)  =
-\frac{2c_{11}}{\beta}\begin{pmatrix}
\sqrt{\varepsilon}x_2 K_{\beta^{-1}}(s(\varepsilon))\\
-\sqrt{\varepsilon}x_2 I_{\beta^{-1}}(s(\varepsilon)). 
\end{pmatrix}
$$
To find the independent limit vector above we consider
$$
\lambda(\varepsilon) = \lambda_2(\varepsilon) + \frac{\sqrt{\varepsilon}x_2 I_{\beta^{-1}}(s(\varepsilon))}{z_1(\varepsilon)}\lambda_1(\varepsilon).
$$
The only component of $\lambda_1(\varepsilon)$ and $\lambda_2(\varepsilon)$ escaping to infinity are the $z$-components as can be easily seen from the explicit expression of $n_i(\varepsilon)$. But the $z$-component of $\lambda(\varepsilon)$ is equal to zero. Moreover the coefficient in front of $\lambda_1(\varepsilon)$ tends to zero as $\varepsilon\to 0+$. Thus from the explicit expressions for $x_1(\varepsilon)$ and $x_2(\varepsilon)$ we obtain that
$$
\lim_{\varepsilon\to 0+} \lambda(\varepsilon) = \begin{pmatrix}
0 \\
y_2 \\
0\\
z_2
\end{pmatrix},
$$
which is a vector that does not lie in the singular invariant subspace. Thus as in the previous sections the same vector lies in $\cL_{0+}$ which proves the result.

\subsection{Jump for $m \geq2$}
\label{subsec:jump}
In the previous subsections we have seen, that for the autonomous models the Jacobi curve has the right limit $\cL_{0+} = \cL_0^{X(0)}$. Now we are ready to prove this for a general system \eqref{eqnormal_jacobi}. We use the standard Riccati comparison result from~\cite{reid}.
\begin{lemma}
\label{lemm:comparison}
Suppose that $B(\tau)$ and $C(\tau)$ are two symmetric continuous matrix functions that satisfy $B(\tau)\geq 0$ and $C(\tau)\geq 0$ for almost every $\tau$ of any closed subinterval $[a,b]$ of a given open interval $I$. Then given a symmetric matrix $S_a \geq 0$, any Cauchy solution of
\begin{align}
\dot{S} + SA+A^TS +SBS - C =0, \\
S(a) = S_a,\nonumber
\end{align}
satisfies $S(\tau) \geq 0$ for all $\tau\in[a,b]$. 
\end{lemma}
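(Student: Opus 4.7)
The plan is a standard first-contact argument on the cone of positive semi-definite matrices, combined with a strict perturbation to rule out tangential contact.

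First I would regularize the Cauchy problem: for $\epsilon > 0$ small, replace the inhomogeneous term $C(\tau)$ by $C(\tau) + \epsilon \id$ and the initial value $S_a$ by $S_a + \epsilon \id$, keeping $A$ and $B$ unchanged. Since the reference solution $S$ is continuous, hence bounded, on the compact interval $[a,b]$, standard continuous dependence for ODEs (together with an a priori bound guaranteed by $B(\tau) \geq 0$, which renders the dominant quadratic term $-S_\epsilon B S_\epsilon$ dissipative on the positive-definite cone) yields $\epsilon_0 > 0$ such that the perturbed solution $S_\epsilon$ exists on all of $[a,b]$ for every $\epsilon \in (0,\epsilon_0)$, and $S_\epsilon \to S$ uniformly on $[a,b]$ as $\epsilon \to 0^+$.

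The heart of the proof is to show that $S_\epsilon(\tau) > 0$ throughout $[a,b]$ for every such $\epsilon$. Suppose the contrary; since $S_\epsilon(a) \geq \epsilon\id$ and the map $\tau \mapsto \lambda_{\min}(S_\epsilon(\tau))$ is continuous, there exists a first time $t_0 \in (a,b]$ with $\lambda_{\min}(S_\epsilon(t_0)) = 0$. Fix a unit null eigenvector $v$, so that $S_\epsilon(t_0) v = 0$ and, by symmetry of $S_\epsilon(t_0)$, also $v^T S_\epsilon(t_0) = 0$. Evaluating the perturbed Riccati equation on the pair $(v,v)$ at $\tau = t_0$ annihilates the three terms $S_\epsilon A$, $A^T S_\epsilon$ and $S_\epsilon B S_\epsilon$ simultaneously, leaving
$$
v^T \dot{S}_\epsilon(t_0) v \;=\; v^T\bigl(C(t_0) + \epsilon \id\bigr) v \;\geq\; \epsilon > 0.
$$
On the other hand, by minimality of $t_0$ the scalar function $f(\tau) := v^T S_\epsilon(\tau) v$ is strictly positive on a left-neighborhood of $t_0$ and vanishes at $t_0$, forcing $f'(t_0) \leq 0$ and contradicting the displayed inequality. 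Letting $\epsilon \to 0^+$ and using uniform convergence, pointwise limits of positive semi-definite matrices remain positive semi-definite, so $S(\tau) \geq 0$ on $[a,b]$.

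The only real obstacle is the global existence of $S_\epsilon$ on the whole interval $[a,b]$ in the regularization step, since matrix Riccati equations can develop finite-time singularities. This is precisely where the hypothesis $B(\tau) \geq 0$ earns its keep — providing the a priori bound that prevents new blow-up along the perturbation path — while $C(\tau) \geq 0$ is what powers the strict inequality $v^T(C(t_0) + \epsilon\id) v \geq \epsilon > 0$ in the first-contact computation. Everything else is either purely algebraic (the annihilation of the $S_\epsilon$-linear and $S_\epsilon$-quadratic terms at a null eigenvector) or a routine passage to the limit.
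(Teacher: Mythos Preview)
The paper does not actually prove this lemma: it is quoted as a standard Riccati comparison result from Reid~\cite{reid} and used as a black box in Subsection~\ref{subsec:jump}. So there is no ``paper's own proof'' to compare against.

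Your first-contact argument with $\epsilon$-regularization is correct and is the classical route. Two minor comments. First, in the existence step you invoke ``an a priori bound guaranteed by $B(\tau)\geq 0$'' to rule out blow-up of $S_\epsilon$; this is unnecessary. The unperturbed solution $S$ is assumed to exist on the compact interval $[a,b]$, and the Riccati right-hand side is polynomial in $S$ with continuous coefficients, so plain continuous dependence on parameters and initial data already gives $\epsilon_0>0$ with $S_\epsilon$ defined on all of $[a,b]$ and $S_\epsilon\to S$ uniformly. The sign of $B$ plays no role there. Second, the hypothesis says $B,C\geq 0$ ``for almost every $\tau$'' while also declaring them continuous; continuity upgrades this to everywhere, which is what you silently use when evaluating $C(t_0)\geq 0$ at the first contact time. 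It is worth saying this explicitly. With these cosmetic adjustments your argument is complete.
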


We consider now the general system \eqref{eqnormal_jacobi}. Let $q=Sp$ and we write the corresponding Riccati equation like discussed in Section~\ref{sec:sympl}
\begin{equation}
\label{eqriccati}
\dot{S} + \frac{SB(\tau)S}{\tau^m} - C(\tau) =0.
\end{equation}
If the system is not oscillating, then we have existence of the Cauchy problem with the boundary data $S(t) = S$ on the interval $(0,t]$ for any fixed symmetric matrix $S$ and for $t$ small enough. 

Assume that $B_1(\tau) \leq B(\tau) \leq B_2(\tau) \leq  0$ and $C_1(\tau) \geq C(\tau) \geq C_2(\tau)$ for small $\tau \in [0,t]$. We assume that $B_i$ and $C_i$ are diagonal matrices like in our models from the previous subsection satisfying the non-oscillation conditions. Then we can define $S^\varepsilon_i$ to be solutions of the Cauchy problem 
$$
\dot{S}+\frac{SB_i(\tau)S}{\tau^m} - C_i(\tau) =0, \qquad S(\varepsilon) = S_{\cL_0},
$$
where $S_{\cL_0}$ is a symmetric matrix that corresponds to $\cL_0$ assuming of course that $\cL_0$ is transversal to the horizontal plane $q=0$. Let $S^\varepsilon(\tau)$ be a solution of \eqref{eqriccati} with $S(\varepsilon) = S_{\cL_0}$. 

Let us assume, for example $W^\varepsilon(\tau) = S_1^\varepsilon(\tau) - S_2^\varepsilon(\tau)$. Then we have that $W^\varepsilon(\tau)$ satisfies
$$
\dot{W}^\varepsilon+ W^\varepsilon\frac{B_1(\tau)}{\tau^m}S_2^\varepsilon+ S_2^\varepsilon\frac{B_1(\tau)}{\tau^m}W^\varepsilon + W^\varepsilon\frac{B_1(\tau)}{\tau^m}W^\varepsilon + \frac{S_2^\varepsilon(B_1(\tau)-B_2(\tau))S_2^\varepsilon}{\tau^m} - (C_1(\tau) - C_2(\tau))=0
$$
with $W_\varepsilon(\varepsilon) = 0$. But then by the Lemma~\ref{lemm:comparison} we obtain that
$$
W^\varepsilon(\tau) \geq 0 \iff S_1^\varepsilon(\tau) \geq S_2^\varepsilon(\tau), 
$$
for any $\tau\geq\varepsilon$ as long as $S^\varepsilon_2(\tau)$ is defined.

By replacing $B_1(\tau)$ with $B(\tau)$ and then $B_2(\tau)$ with $B(\tau)$, we similarly obtain that
$$
S_2^\varepsilon(\tau) \leq S^\varepsilon(\tau) \leq S_1^\varepsilon(\tau),
$$
for any $\tau \geq \varepsilon$ sufficiently close to $\varepsilon$ and $\varepsilon>0$ small. By fixing $\tau$ sufficiently small and taking limits as $\varepsilon\to 0+$ we find that
$$
S_1(\tau) \leq S(\tau) \leq S_2(\tau),
$$
where these matrix functions are the corresponding Jacobi curves. But we have proven in the previous subsections that for our model examples we had the same right limit. Thus $S_1(0+)=S_2(0+)$ and 
$$
S(0+) = S_1(0+) = S_2(0+). 
$$

If $\cL_0$ or $\cL_0^{X(0)}$ are not transversal to the horizontal space $\Sigma$, then this construction clearly does not work, because either $S^\varepsilon(\varepsilon)$ or $S(0+)$ do not exist. In this case we make a change of variables of the form
$$
\begin{pmatrix}
p\\
q
\end{pmatrix}
\mapsto
M
\begin{pmatrix}
p\\
q
\end{pmatrix}, \qquad M = 
\begin{pmatrix}
\alpha_1 & 0 & \beta_1 & 0 \\
0 & \alpha_2 & 0 & \beta_2\\
\gamma_1 & 0 & \delta_1 & 0 \\
0 & \gamma_2 & 0 & \delta_2
\end{pmatrix},
$$
s.t.
$$
\begin{vmatrix}
\alpha_i & \beta_i \\
\gamma_i & \delta_i
\end{vmatrix} = 1
$$
Matrix $M$ is clearly symplectic and we want to choose it in such a way that $M\cL_0$ and $M\cL_0^{X(0)}$ are transversal to the horizontal subspace $\Sigma$. Such matrices $M$ are actually dense in the set of all matrices of the given form. We can prove this by an explicit computation. 

If $\dim(\cL_0 \cap \Sigma ) > 0$, then following along the lines of Example~\ref{ex:1} we can assume that
$$
\cL_0 =\begin{bmatrix}
x & 0\\
0 & 0\\
z & 0\\
0 & 1
\end{bmatrix}
$$
Then $\dim(M\cL_0 \cap \Sigma) = 0$ is equivalent to 
$$
\alpha_1 x + \beta_1 z \neq 0, \qquad 
\beta_2 \neq 0.
$$
Similarly from examples~\ref{ex:1} and~\ref{ex2} we know that if $\dim(\cL_0^{X(0)} \cap \Sigma) > 0$, then
$$
\cL_0^{X(0)} = \begin{bmatrix}
1 & 0 \\
0 & 0\\
0 & 0 \\
0 & 1
\end{bmatrix}
$$
Then $\dim(M\cL_0^{X(0)} \cap \Sigma ) = 0$ can be achieved by taking $\beta_2 \neq 0$.

In the new coordinates our Jacobi equation takes the form
$$
\frac{d}{d\tau}\begin{pmatrix}
p\\
q
\end{pmatrix} = M\begin{pmatrix}
0 & \frac{B(\tau)}{\tau^{m}}\\
C(\tau) & 0
\end{pmatrix}M^{-1}
\begin{pmatrix}
p\\
q
\end{pmatrix}.
$$ 
An explicit computation gives us
\begin{align*}
& M\begin{pmatrix}
0 & \frac{B(\tau)}{\tau^{m}}\\
C(\tau) & 0
\end{pmatrix}M^{-1} =  \\
= &
\begin{pmatrix}
c_{11} \beta_1 \delta_1 - \left(\frac{1}{b} +b_{11}\right)  \gamma_1 \alpha_1 & 
  -b_{12} \gamma_2 \alpha_1 & 
  -c_{11} \beta_1^2 + \left(\frac{1}{b} +b_{11}\right)\alpha_1^2 & 
  b_{12} \alpha_1 \alpha_2 \\
 -b_{12} \gamma_1 \alpha_2 & 
  c_{22} \delta_2 \beta_2 - b_{22} \gamma_2 \alpha_2 & 
  b_{12} \alpha_1 \alpha_2 & -c_{22} \beta_2^2 + b_{22} \alpha_2^2\\
 c_{11} \delta_1^2 - \left(\frac{1}{b} +b_{11}\right)  \gamma_1^2 & 
  -b_{12} \gamma_1 \gamma_2 & -c_{11} \delta_1 \beta_1 + 
  \left(\frac{1}{b} +b_{11}\right)  \gamma_1 \alpha_1 & b_{12} \gamma_1 \alpha_2 \\
 -b_{12} \gamma_1 \gamma_2 & c_{22} \delta_2^2 - b_{22} \gamma_2^2 & 
  b_{12} \gamma_2 \alpha_1 & 
  -c_{22} \delta_2 \beta_2 + b_{22} \alpha_2 \gamma_2
\end{pmatrix}
\end{align*}
Recall that our original system was such that $b<0$ and $b_{22} <0$ for $\tau\geq 0$ small. Thus the upper off-diagonal 2x2 block will be a negative matrix function for small $\tau>0$, if we choose $\alpha_2$ big enough. For the same reason the lower off-diagonal 2x2 block will be negative if we choose $\gamma_2$ big enough. Thus we can apply the comparison lemma as before with
\begin{align*}
B_1(\tau) &= \begin{pmatrix}
\frac{\alpha_1^2}{b(\tau)}+\varepsilon & 0\\ 
0 &  - c_{22}(0)\beta_2^2 + b_{22}(0)\alpha_2^2 +\varepsilon
\end{pmatrix}, 
\\
B_2(\tau) &= \begin{pmatrix}\frac{\alpha_1^2}{b(\tau)}-\varepsilon & 0\\ 
0 & - c_{22}(0)\beta_2^2 + b_{22}(0)\alpha_2^2 -\varepsilon
\end{pmatrix},\\
C_1(\tau) &= \begin{pmatrix}
-\frac{\gamma_1^2}{b(\tau)}-\varepsilon & 0\\ 
0 & c_{22}(0)\delta_2^2-b_{22}(0)\gamma_2^2-\varepsilon
\end{pmatrix}, 
\\
C_2(\tau) &= \begin{pmatrix}
-\frac{\gamma_1^2}{b(\tau)}+\varepsilon & 0\\ 
0 & c_{22}(0)\delta_2^2-b_{22}(0)\gamma_2^2 +\varepsilon
\end{pmatrix}
\end{align*}
where $\varepsilon >0$ is sufficiently small.

\subsection{Jacobi curve for $m=1,2$}
\label{subsec:curve}
As we have already discussed before, the jump alone does not determine the Jacobi curve, because with a singular Jacobi DE we lose uniqueness. So we need to characterize the right solution of the Jacobi equation. In this section we prove the following result.
\begin{theorem}
\label{thm:first_jet}
If $m=1$ or $m=2$ then Jacobi curve after a singularity can be characterized as a boundary value problem of the extended Jacobi DE on the Lagrangian Grassmanian with conditions on the left end-point and the first left derivative.
\end{theorem}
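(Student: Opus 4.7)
The strategy is to work in the normal form \eqref{eqnormal_jacobi} of Section \ref{sec:normal_form}, where the singularity is confined to a $2$-dimensional symplectic invariant subspace, and to identify inside the family of solutions of the associated Riccati equation \eqref{eqriccati} the unique one whose $0$-jet at $\tau=0+$ equals $\cL_0^{X(0)}$ and whose first derivative matches the one computed from the model cases of Sections \ref{subsec:jump1}--\ref{subsec:model2}. Outside this singular block the dynamics is regular and standard ODE theory delivers existence and uniqueness, so all the work is in that invariant $4$-dimensional subsystem.

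For $m=1$, I would use the Frobenius representation $\Phi(\tau)=P(\tau)\tau^{H_{-1}}$ obtained in Section \ref{subsec:jump1}, with $P$ analytic, $P(0)=\id$, and $H_{-1}$ a non-zero nilpotent of square zero. Thus $\tau^{H_{-1}}=\id+\ln\tau\cdot H_{-1}$ and every solution of the Jacobi DE on $L(n)$ has near $0$ a single logarithmic mode, carried by $X(0)$. Since $\cL_{0+}=\cL_0^{X(0)}$ already contains $X(0)$, the value at $0+$ cannot fix the coefficient of $\ln\tau$; but that coefficient is precisely what produces the leading $\tau^{-1}$ singularity in $\dot{\cL}(\tau)$, so it is read off from the first left derivative. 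Two solutions of the extended Jacobi DE sharing $\cL_{0+}$ and $\dot{\cL}(0+)$ must therefore coincide, giving a well-posed BVP.

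For $m=2$, the model calculation of Section \ref{subsec:model2} produces two independent modes on the singular block with exponents $(\pm\Delta-1)/2$, $\Delta=\sqrt{1+4b_{11}c_{11}}>0$ by the non-oscillation assumption from Theorem \ref{thm_kneser}. One mode contributes the vector $X(0)$ (the ``dominant'' direction fixed by the jump), while the other contributes the complementary Lagrangian direction inside $\cL_0^{X(0)}$. The coefficient of the dominant mode is determined by $\cL_{0+}$; the coefficient of the subdominant one is determined by $\dot{\cL}(0+)$. Explicit substitution into the fundamental matrix displayed in Section \ref{subsec:model2} expresses both coefficients linearly in the data $(\cL_{0+},\dot{\cL}(0+))$, proving uniqueness for the model. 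I would then propagate the characterization to the general system \eqref{eqnormal_jacobi} exactly as in Section \ref{subsec:jump}: bracket $B(\tau),C(\tau)$ between diagonal model matrices $B_i,C_i$, apply the Riccati comparison Lemma \ref{lemm:comparison} to the Cauchy solutions $S^{\varepsilon}_i(\tau)$, and pass to the limit $\varepsilon\to 0+$. Using the Riccati equation to rewrite $\dot S$ in terms of $S$ allows the same sandwich to control the first right derivative as well, so the $1$-jets of the extremal model solutions squeeze the true one.

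The main obstacle I anticipate is precisely this last step: showing that the Riccati bracketing is tight enough to pin down not just $S(0+)$ but the whole $1$-jet $(S(0+),\dot S(0+))$. It is straightforward to bound the values, but the derivative involves a factor $\tau^{-m}$ that could amplify small discrepancies between $B(\tau)$ and its model $B_i$. I expect this to be overcome by choosing $B_i,C_i$ matching the Taylor expansions of $B,C$ to sufficiently high order and invoking the analytic dependence of solutions of the Riccati equation on parameters away from the singularity; the separation between dominant and subdominant exponents for $m=1,2$ (which is lost for $m\geq 3$, explaining the restriction in the statement) provides the quantitative margin that makes the sandwich work.
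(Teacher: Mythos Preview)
Your approach diverges from the paper's at the crucial step, and the divergence leaves a real gap. You propose to carry both the value $\cL_{0+}$ and the derivative $\dot\cL(0+)$ from the model to the general system by the Riccati comparison Lemma~\ref{lemm:comparison}, exactly as Section~\ref{subsec:jump} did for the value. But that lemma produces \emph{inequalities} $S_2^\varepsilon\le S^\varepsilon\le S_1^\varepsilon$; it is well suited to squeezing a limit value, and with care one can squeeze $S_1(0+)=\lim_{\tau\to0+}S(\tau)/\tau$ as well. What it does \emph{not} give is uniqueness of the solution with a prescribed $1$-jet: if two solutions of the singular Riccati equation share $(\cL_{0+},\dot\cL(0+))$, a sandwich between model solutions says nothing about their difference. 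Your sentence ``propagate the characterization to the general system exactly as in Section~\ref{subsec:jump}'' therefore does not close the argument, and the obstacle you yourself flag---the $\tau^{-m}$ amplification and the need for higher-order matching---is a symptom of this, not something that better Taylor approximations will cure.

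The paper bypasses comparison entirely at this stage. It first normalizes so that $\cL_{0+}$ corresponds to $S=0$ (three cases, handled by the symplectic matrices $M_1,M_2,M_3$), then performs the blow-up $S(\tau)=\tau S_1(\tau)$. For $m=1,2$ this yields a Riccati equation for $S_1$ of the form $\tau\dot S_1=Q(S_1)+\tau R(\tau,S_1)$ with $Q$ autonomous; the point is that $(S_1^{34},0)$ is a \emph{hyperbolic} equilibrium of the extended system $(\dot S_1,\dot\tau)=(Q(S_1)+\tau R,\tau)$ with a single positive eigenvalue. Shayman's classification of Riccati flows on $L(2)$ identifies which equilibrium the curves $S_1^\varepsilon(\varepsilon)$ fall into (namely $W^s(E_{34})$), the variation formula transfers the limit computation from the autonomous to the non-autonomous flow, and Grobman--Hartman then gives the uniqueness you are missing: there is exactly one trajectory approaching $(S_1^{34},0)$ from $\tau>0$. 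This dynamical-systems route is what singles out the Jacobi curve among all solutions with the same $1$-jet. Incidentally, the restriction to $m\le 2$ is structural rather than about exponent gaps: for $m\ge 3$ the blow-up $S=\tau S_1$ produces a term $\tau^{2-m}S_1BS_1$ that still diverges, so no regular equilibrium appears.
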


This will be proven in a number of steps:
\begin{enumerate}
\item We change coordinates so that $\cL_0$ and $\cL_{0+}$ lie in the same coordinate chart and $\cL_{0+}$ is taken to be zero;
\item We write down the corresponding Riccati equation and perform a certain blow-up procedure;
\item After the blow-up we obtain a non-autonomous Riccati equation. We then proceed in determining the Jacobi curve for the autonomous part;
\item Using a deformation argument we prove that in the non-autonomous case the Jacobi curve is well-defined by the same jet.
\end{enumerate}

For the first step we are going to have three different situations as well
\begin{enumerate}
\item $\cL_0$ is transversal to the horizontal plane in current coordinates and in the corresponding symmetric matrix $S^0$ either $S_{11}^0\neq 0$ or $S_{11}^0 = S_{12}^0 = 0$;
\item $\cL_0$ is either transversal to the horizontal plane in current coordinates and in the corresponding symmetric matrix $S_{11}^0= 0$, $S_{22}^0 \neq 0$ or $\cL_0$ and the horizontal plane $\Sigma$ have a common line;
\item $\cL_0$ is either transversal to the horizontal plane in current coordinates and in the corresponding symmetric matrix $S_{11}^0= S_{22}^0= 0$, $S_{12}^0 \neq 0$ or $\cL_0$ is the horizontal plane $\Sigma$.
\end{enumerate}

Let
$$
S_{22}^+ = \begin{cases}
S_{22}^0, & S_{11}^0=0 \\
S_{22}^0-\frac{(S_{12}^0)^2}{S_{11}^0}, & S_{11}^0\neq 0
\end{cases}
$$
If $S_{11}^0 \neq 0$ or if $S_{11}^0 = S_{12}^0 = 0$, then we have 
$$
\cL_{0+} = \begin{bmatrix}
1 & 0 \\
0 & 1 \\
0 & 0\\
0& S_{22}^+
\end{bmatrix}
$$
or else, when $S_{11}^0 = 0$ and $S_{12}^0 \neq 0$, we get
$$
\cL_{0+} = \begin{bmatrix}
1 & 0 \\
0 & 0 \\
0 & 0\\
0 & 1
\end{bmatrix}
$$
Depending on the case we apply one of the three symplectic transformations
$$
M_1 = \begin{pmatrix}
1 & 0 & 0 & 0 \\
0 & 1 & 0 & 0 \\
0 & 0 & 1 & 0 \\
0 & -S_{22}^+ & 0 & 1 
\end{pmatrix},
\qquad
M_2 = \begin{pmatrix}
1 & 0 & 0 & 0 \\
0 & 0 & 0 & -1 \\
0 & 0 & 1 & 0 \\
0 & 1 & 0 & 0 
\end{pmatrix},
\qquad
M_3 = \begin{pmatrix}
1 & 0 & -1 & 0 \\
0 & 0 & 0 & -1 \\
0 & 0 & 1 & 0 \\
0 & 1 & 0 & 0 
\end{pmatrix}.
$$
These transformations map $\cL_{0+}$ to the vertical subspace. Let us check what happens to $\cL_0$ under these transformations. We have for case 1 either
$$
M_1\begin{bmatrix}
1 & 0 \\
0 & 1 \\
S_{11}^0 & S_{12}^0\\
S_{12}^0 & S_{22}^0
\end{bmatrix} =
\begin{bmatrix}
1 & 0 \\
0 & 1 \\
S_{11}^0 & S_{12}^0\\
S_{12}^0 & \frac{(S_{12}^0)^2}{S_{11}^0}
\end{bmatrix} 
\qquad \text{or} \qquad 
M_1\begin{bmatrix}
1 & 0 \\
0 & 1 \\
0 & 0\\
0 & S_{22}^0
\end{bmatrix} =
\begin{bmatrix}
1 & 0 \\
0 & 1 \\
0 & 0\\
0 & 0
\end{bmatrix}.
$$

For case 2 either
$$
M_2\begin{bmatrix}
1 & 0 \\
0 & 1 \\
0 & S_{12}^0\\
S_{12}^0 & S_{22}^0
\end{bmatrix}=
\begin{bmatrix}
1 & 0 \\
-S_{12}^0 & -S_{22}^0 \\
0 & S_{12}^0\\
0 & 1
\end{bmatrix} =
\begin{bmatrix}
1 & 0\\
0 & 1\\
-\frac{(S_{12}^0)^2}{S_{22}^0} & -\frac{(S_{12}^0)}{S_{22}^0}\\
-\frac{(S_{12}^0)}{S_{22}^0} & -\frac{1}{S_{22}^0}
\end{bmatrix}
$$
or
$$
M_2\begin{bmatrix}
1 & 0 \\
0 & 0 \\
z & 0\\
0 & 1
\end{bmatrix} =
\begin{bmatrix}
1 & 0 \\
0 & -1 \\
z & 0\\
0 & 0
\end{bmatrix} =
\begin{bmatrix}
1 & 0 \\
0 & 1 \\
z & 0\\
0 & 0
\end{bmatrix}.
$$

For the case 3 either 
$$
M_3\begin{bmatrix}
1 & 0 \\
0 & 1 \\
0 & S_{12}^0\\
S_{12}^0 & 0
\end{bmatrix} =
\begin{bmatrix}
1 & -S_{12}^0\\
-S_{12}^0 & 0\\
0 & S_{12}^0\\
0 & 1
\end{bmatrix} =
\begin{bmatrix}
1 & 0\\
0 & 1\\
-1 & -\frac{1}{S_{12}^0}\\
-\frac{1}{S_{12}^0} & -\frac{1}{(S_{12}^0)^2}
\end{bmatrix}
$$
or
$$
M_3\begin{bmatrix}
0 & 0 \\
0 & 0 \\
-1 & 0\\
0 & -1
\end{bmatrix} =
\begin{bmatrix}
1 & 0 \\
0 & 1 \\
-1 & 0\\
0 & 0
\end{bmatrix}.
$$
And this finishes the first step.

For the second step we have to rewrite the Jacobi equation in the new coordinates. We simply have to conjugate the right-hand side of \eqref{eqnormal_jacobi} by the corresponding matrix $M_i$. Then to each case corresponds its own Jacobi equation of the form 
$$
\frac{d}{d\tau}\begin{pmatrix}
p\\
q
\end{pmatrix}=
M_i\begin{pmatrix}
A(\tau) & B(\tau)\\
C(\tau) & -A^T(\tau)
\end{pmatrix}M_i^{-1}
\begin{pmatrix}
p\\
q
\end{pmatrix}
$$
or more precisely
\begin{align*}
&\text{Case 1}: &\frac{d}{d\tau}\begin{pmatrix}
p_1\\
p_2\\
q_1\\
q_2
\end{pmatrix} &= 
\begin{pmatrix}
0 & b_{12}(\tau)S_{22}^+ & \frac{1}{b(\tau)} + b_{11}(\tau) & b_{12}(\tau)\\
0 & b_{22}(\tau)S_{22}^+ & b_{12}(\tau) & b_{22}(\tau)\\
c_{11}(\tau) & 0 & 0 & 0\\
0 & c_{22}(\tau)-b_{22}(\tau)(S_{22}^+)^2 & -b_{12}(\tau)S_{22}^+ & -b_{22}(\tau)S_{22}^+
\end{pmatrix}
\begin{pmatrix}
p_1\\
p_2\\
q_1\\
q_2
\end{pmatrix},\\
&\text{Case 2}: &\frac{d}{d\tau}\begin{pmatrix}
p_1\\
p_2\\
q_1\\
q_2
\end{pmatrix} &= 
\begin{pmatrix}
0 & -b_{12}(\tau) & \frac{1}{b(\tau)} + b_{11}(\tau) & 0\\
0 & 0 & 0 & -c_{22}(\tau)\\
c_{11}(\tau) & 0 & 0 & 0\\
0 & -b_{22}(\tau) & b_{12}(\tau) & 0
\end{pmatrix}
\begin{pmatrix}
p_1\\
p_2\\
q_1\\
q_2
\end{pmatrix},\\
&\text{Case 3}: &\frac{d}{d\tau}\begin{pmatrix}
p_1\\
p_2\\
q_1\\
q_2
\end{pmatrix} &= 
\begin{pmatrix}
-c_{11}(\tau) & -b_{12}(\tau) & \frac{1}{b(\tau)} + b_{11}(\tau)-c_{11}(\tau) & 0\\
0 & 0 & 0 & -c_{22}(\tau)\\
c_{11}(\tau) & 0 & c_{11}(\tau) & 0\\
0 & -b_{22}(\tau) & b_{12}(\tau) & 0
\end{pmatrix}
\begin{pmatrix}
p_1\\
p_2\\
q_1\\
q_2
\end{pmatrix}.
\end{align*}

We then take $q=Sp$ and obtain a Riccati equation of the form \eqref{eqriccati_general}. We do a blow-up of this equation by taking
$$
S(\tau) = \tau S_1(\tau).
$$
Then we obtain a Riccati equation for $S_1(\tau)$ of the form
\begin{equation}
\label{eqriccati_deriv2}
\tau \dot{S}_1 +S_1 + S_1 \begin{pmatrix}
\frac{1}{b_2} & 0\\
0 & 0
\end{pmatrix} S_1 - 
C(0) = \tau R(\tau,S)
\end{equation}
for $m = 2$ and
\begin{equation}
\label{eqriccati_deriv1}
\tau \dot{S}_1 +S_1  - 
C(0) = \tau R(\tau,S)
\end{equation}
for $m = 1$. 

Let $\tilde{S}^0$ be the symmetric matrix that corresponds to $\cL_0$ in the new coordinates. We denote by $S^\varepsilon_1(\tau)$ the solution of this Riccati equation which satisfies
$$
S_1^\varepsilon(\varepsilon) = \frac{\tilde{S}^0 }{\varepsilon}.
$$
Since outside the singularity the right-hand side is analytic and we have a family of solutions converging to a solution, it is clear that for the limiting curve
$$
\dot{S}(\tau) = \lim_{\varepsilon \to 0+} \left( S^\varepsilon_1(\tau) + \tau \dot{S}_1^\varepsilon(\tau)\right),
$$
for $\tau >0$ sufficiently small. That finishes the second step. 

For the third step we are going to consider just the first case. For the second and the third case the argument is repeated word by word (see Remark~\ref{rmk:new}). We assume that the right-hand side of those equations is actually zero. Then we can understand very well the whole phase portrait of this Riccati equation. Indeed, we can extend the dynamics to the whole Lagrangian Grassmanian $L(2)$ by rewriting the corresponding Hamiltonian system. 
$$
\tau\frac{d}{d\tau}\begin{pmatrix}
p_1\\
p_2\\
q_1\\
q_2
\end{pmatrix} = 
\begin{pmatrix}
\frac{1}{2} & 0 & \frac{1}{b_2} & 0 \\
0 & \frac{1}{2} &0 & 0 \\
c_{11}(0) & 0 & -\frac{1}{2} & 0 \\
0 & c_{22}(0)-(S_{22}^+)^2b_{22}(0) & 0 &  -\frac{1}{2} 
\end{pmatrix}\begin{pmatrix}
p_1\\
p_2\\
q_1\\
q_2
\end{pmatrix} = H\begin{pmatrix}
p_1\\
p_2\\
q_1\\
q_2
\end{pmatrix}
$$

A complete description of the phase portrait of such a system was given in~\cite{riccati}. It is clear that the equilibrium points are spanned by  the eigenvectors. In our case, $H$ has eigenvalues
$$
\lambda_1 = -\frac{1}{2}\sqrt{1+\frac{4c_{11}(0)}{b_2}},\quad \lambda_2 = -\frac{1}{2}, \quad
\lambda_3 = \frac{1}{2}, \quad 
\lambda_4 = \frac{1}{2}\sqrt{1+\frac{4c_{11}(0)}{b_2}}.
$$
From the assumptions we have that all four eigenvalues are real and different. Let $E_i$ be the corresponding eigenvectors. We have
\begin{align*}
E_1 &= \begin{pmatrix}
 1-\sqrt{1+\frac{4c_{11}(0)}{b_2}} \\
0\\
2c_{11}(0)\\
0
\end{pmatrix},  &
E_2 &= \begin{pmatrix}
0\\
0\\
0\\
1
\end{pmatrix},\\
E_3 &= \begin{pmatrix}
0\\
1\\
0\\
c_{22}(0)-b_{22}(0)(S_{22}^+)^2
\end{pmatrix}, 
&
E_4 &= \begin{pmatrix}
 1+\sqrt{1+\frac{4c_{11}(0)}{b_2}} \\
0\\
2c_{11}(0)\\
0
\end{pmatrix}.
\end{align*}

We define $E_{ij} = \spn\{E_i,E_j\}$. It is easy to see that we have four equilibrium points on the Lagrangian Grassmanian: $E_{12}, E_{13}, E_{24}, E_{34}$. For each of these equilibrium points we can find the corresponding stable and unstable manifolds $W^s(E_{ij})$ and $W^u(E_{ij})$. Then if $S_{1}^\varepsilon(\varepsilon)$ lies in $W^s(E_{ij})$, the Jacobi curve is going to be just the equilibrium solution $\cL_\tau = E_{ij}$. Indeed, the Lagrangian Grassmanian is compact and therefore any trajectory in the stable manifold has finite length. But every trajectory of our Riccati equation has speed that goes to infinity as $\tau \to 0+$. So as we take $\varepsilon$ smaller and smaller for a fixed time $\tau>0$ the corresponding curve $S^\varepsilon(\tau)$ is going to get closer and closer to the equilibrium point approaching it in the limit. It remains only to describe stable manifolds of our equilibrium points.

Luckily it was already done in~\cite{riccati} by M. Shayman. He proved that in order to find the stable manifolds we need to form a flag $\{0\} = V_0\subset V_1 \subset ... \subset V_4 = \R^4$, where 
$$
V_i = \bigoplus_{j = 1}^i E_j,
$$ 
and associate to each $E_{ij}$ a sequence $l(E_{ij})=(l_1,l_2,l_3,l_4)$, where 
$$
l_k = \begin{cases}
1 & \text{ if } k=i,j;\\ 
0 & \text{ otherwise. }  
\end{cases}.
$$
Then
$$
W^s(E_{ij}) = \left\{\Lambda \in L(2)\,:\, \dim \Lambda\cap V_m = \sum_{k=1}^m l_k,\, l_k \in l(E_{ij}),m=1,2,3,4.\right\}.
$$

It remains to check for which initial data $S_1^\varepsilon(\varepsilon)$ lies in which $W^s(E_{ij})$ for small $\varepsilon>0$ and describe the corresponding $W^s(E_{ij})$.
\begin{lemma}
Suppose that the right hand side of (\ref{eqriccati_deriv2}) is zero. Then the curves $\Lambda_1^\varepsilon(\tau)$ that correspond to $S^{\varepsilon}_1(\tau)$ converge pointwise to the equilibrium solution $\Lambda(\tau)\equiv E_{34}$. Or in local coordinates we get that
$$
\lim_{\varepsilon \to 0+}S_1^\varepsilon(\tau) = \begin{pmatrix}
-\frac{b_2}{2}\left( 1-\sqrt{1 + \frac{4c_{11}}{b_2}} \right) & 0\\
0 & c_{22}(0)-b_{22}(0)(S_{22}^+)^2 
\end{pmatrix}= S^{34}_1.
$$
If $R(\tau,S)=0$ in (\ref{eqriccati_deriv2}), then exists a unique solution of this equation with $S_1(0+) = S^{34}_1$.
\end{lemma}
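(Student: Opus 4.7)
The plan is to recognise the blown-up Riccati equation (\ref{eqriccati_deriv2}) with $R\equiv 0$ as the projection of an autonomous linear Hamiltonian flow to $L(2)$ and to exploit the hyperbolicity of its attracting equilibrium $E_{34}$. After the substitution $s=\ln\tau$ the equation becomes autonomous with $S_1'+S_1+S_1 B_0 S_1 - C(0)=0$, where $B_0=\mathrm{diag}(1/b_2,0)$, and the associated Hamiltonian system on $\R^4$ has matrix $H$. Denote the induced flow on $L(2)$ by $\Phi_s$. The Cauchy solution $S_1^\varepsilon$ with $S_1^\varepsilon(\varepsilon)=\tilde S^0/\varepsilon$ is then realised as $\Lambda_1^\varepsilon(\tau)=\Phi_{\ln(\tau/\varepsilon)}\Lambda_0^\varepsilon$, where $\Lambda_0^\varepsilon\in L(2)$ is the plane with graph matrix $\tilde S^0/\varepsilon$. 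For fixed $\tau>0$ one has $\ln(\tau/\varepsilon)\to +\infty$ as $\varepsilon\to 0+$, so the long-time behaviour of $\Phi_s$ acting on the initial plane is what governs the limit.

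The next step is to compute $\Lambda_0:=\lim_{\varepsilon\to 0+}\Lambda_0^\varepsilon$ in $L(2)$ and verify $\Lambda_0\in W^s(E_{34})$. Clearing the $1/\varepsilon$ factor by rescaling basis vectors yields $\Lambda_0$ explicitly in each of the three subcases of Case~1 from Section~\ref{sec:jumpg2}: $\Lambda_0=\Sigma$ if $\tilde S^0$ is invertible, $\Lambda_0=\Pi$ if $\tilde S^0=0$, and in the rank-one subcase $\Lambda_0=\spn\{(0,0,1,S^0_{12}/S^0_{11})^T,\,(-S^0_{12}/S^0_{11},1,0,0)^T\}$. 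Using the explicit formulas for $E_1,\dots,E_4$ together with $c_{11}(0)\neq 0$, $b_2\neq 0$ and $1+4c_{11}(0)/b_2>0$, I would check the flag conditions $\dim(\Lambda_0\cap V_m)=0,0,1,2$ for $m=1,2,3,4$, which by Shayman's criterion place $\Lambda_0$ in the open cell $W^s(E_{34})$.

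With this in hand, openness of $W^s(E_{34})$ gives $\Lambda_0^\varepsilon\in W^s(E_{34})$ for all sufficiently small $\varepsilon>0$, and $\{\Lambda_0^\varepsilon\}$ lies in a compact subset of this cell. Since $E_{34}$ corresponds to the pair of largest eigenvalues $\lambda_3=1/2$ and $\lambda_4=\tfrac{1}{2}\sqrt{1+4c_{11}(0)/b_2}$, the linearisation of $\Phi_s$ at $E_{34}$ has only negative eigenvalues $\lambda_k-\lambda_j$ with $j\in\{3,4\}$, $k\in\{1,2\}$. Hence $E_{34}$ is a hyperbolic sink and forward orbits converge to it uniformly on compact subsets of $W^s(E_{34})$. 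Combined with $\Lambda_0^\varepsilon\to\Lambda_0$ and $\ln(\tau/\varepsilon)\to +\infty$ this yields $\Lambda_1^\varepsilon(\tau)\to E_{34}$ pointwise in $\tau>0$. To read off the matrix, note that $E_3$ has only its second and fourth components nonzero while $E_4$ has only its first and third, so the graph matrix of $E_{34}=\spn\{E_3,E_4\}$ in the chart $\Sigma^\pitchfork$ is automatically diagonal with entries $q_1/p_1$ and $q_2/p_2$; rationalising the fraction $2c_{11}(0)/(1+\sqrt{1+4c_{11}(0)/b_2})$ returns exactly the claimed $S^{34}_1$.

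Uniqueness follows from the same hyperbolicity: any solution $S_1(\tau)$ of (\ref{eqriccati_deriv2}) with $R\equiv 0$ satisfying $S_1(0+)=S^{34}_1$ corresponds under $s=\ln\tau$ to an orbit of $\Phi_s$ converging to $E_{34}$ as $s\to-\infty$, but since $E_{34}$ is an attracting sink its unstable manifold is trivial, so the orbit must be the constant one and $S_1\equiv S^{34}_1$. The main obstacle I anticipate is the bookkeeping in the rank-one subcase of Case~1: computing $\Lambda_0$ by carefully clearing denominators, verifying it is Lagrangian, and checking all four flag conditions by hand. The invertible and zero subcases reduce to $\Sigma$ and $\Pi$ respectively, where the flag computation is essentially immediate from the explicit components of $E_1,\dots,E_4$.
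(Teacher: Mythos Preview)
Your approach is correct and follows the same strategy as the paper: identify $E_{34}$ as the attracting equilibrium via Shayman's flag description, verify the initial data lies in its basin, and use the trivial unstable manifold for uniqueness. The paper streamlines two of your steps. First, since for $E_{34}$ one has $(l_1,l_2,l_3,l_4)=(0,0,1,1)$, the only nontrivial flag condition is $\dim(\Lambda\cap V_2)=0$, so $W^s(E_{34})=E_{12}^\pitchfork$ and there is no need to verify four separate intersections. Second, rather than passing to the limit $\Lambda_0=\lim_{\varepsilon\to 0}\Lambda_0^\varepsilon$ and invoking openness of the cell, the paper checks $\Lambda_0^\varepsilon\pitchfork E_{12}$ directly at finite $\varepsilon$ by a two-line computation showing non-transversality can occur for at most one exceptional value of $\varepsilon$. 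Your route via the substitution $s=\ln\tau$ and uniform convergence on compacta is a bit more explicit about why the pointwise limit is actually attained; the paper argues this more informally via the finite arc-length versus diverging speed.

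One caution about your case split: in Case~1 the transformed matrix $\tilde S^0$ is always singular, since either $S_{11}^0=S_{12}^0=0$ gives $\tilde S^0=0$, or $S_{11}^0\neq 0$ gives $\det\tilde S^0=S_{11}^0\cdot(S_{12}^0)^2/S_{11}^0-(S_{12}^0)^2=0$. So your ``invertible'' subcase never arises here, which is fortunate: in that subcase your limit $\Lambda_0=\Sigma$ would \emph{fail} the stable-manifold test, because $E_2=(0,0,0,1)^T\in\Sigma$ forces $\dim(\Sigma\cap E_{12})\geq 1$. The two genuine subcases (zero and rank one) do go through as you outline.
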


\begin{proof}
By definition we find that
$$
W^s(E_{34}) = \left\{\Lambda\in L(2) \,:\, \dim(\Lambda \cap E_{12}) = 0\right\} = E_{12}^\pitchfork,
$$
which is dense in $L(2)$. So we only need to prove that $\Lambda^\varepsilon(\varepsilon)\in E_{12}^\pitchfork$ for $\varepsilon>0$ small. Indeed, in this case the unstable manifold $W^u(E_{34}) = \{E_{34}\}$ and so the only solution of (\ref{eqriccati_deriv2}) with $S_1(0+) = S^{34}_1$ can be $S_1(\tau) \equiv S_1^{34}$.

We note that if $S_{11}^0 = S_{12}^0 = 0$, then $S^\varepsilon_1(\varepsilon) = 0$. In this case for small $\varepsilon>0$ it is clear that $\dim (\Lambda^{\varepsilon}(\varepsilon)\cap E_{12})=0$. If  $S_1^\varepsilon(\varepsilon) \neq 0$ for small $\varepsilon>0$, then $\dim (\Lambda^{\varepsilon}(\varepsilon)\cap E_{12})>0$ if and only if
$$
S_{12}^+ = 0 \quad \text{   and   } \quad \frac{S_{11}^+}{\varepsilon} = - \frac{b_2}{2}\left( 1+\sqrt{1+\frac{4c_{11}}{b_2}} \right),
$$
but this can happen only for a single value
$$
\varepsilon = -\frac{2S_{11}^+}{b_2\left( 1+\sqrt{1+\frac{4c_{11}}{b_2}}\right)}.
$$
And so for small $\varepsilon>0$ we indeed get $\Lambda^\varepsilon(\varepsilon)\in E_{12}^\pitchfork$.
\end{proof}

Case $m = 1$ is easier, since the principal part of the equation (\ref{eqriccati_deriv1}) is linear and has a global stable equilibrium
$$
S = \begin{pmatrix}
c_{11}(0) & 0\\
0 & c_{22}(0)-b_{22}(0)(S_{22}^+)^2
\end{pmatrix}.
$$
As for $m = 2$, we have then $S_1(0+) = S$ and a unique solution to a Cauchy problem, that characterizes our Jacobi curve.

\begin{remark}
\label{rmk:new}
For the case 2 and 3 we have a similar result. We obtain that 
$$
\lim_{\varepsilon \to 0+}S_1^\varepsilon(\tau) = \begin{pmatrix}
-\frac{b_2}{2}\left( 1-\sqrt{1 + \frac{4c_{11}}{b_2}} \right) & 0\\
0 & -b_{22}(0) 
\end{pmatrix} = S_1^{34}
$$
for $m =2 $ and
$$
\lim_{\varepsilon \to 0+}S_1^\varepsilon(\tau) =
\begin{pmatrix}
c_{11}(0) & 0\\
0 & -b_{22}(0)
\end{pmatrix}
$$
for $m = 1$ and that in this case indeed the Jacobi curve is fully determined by the first jet. We keep the notation $S_1^{34}$ because in the case 2 and 3 we obtain a Hamiltonian system whose matrix has exactly the same eigenvalues as the Hamiltonian matrix of case 1 and the same eigenvectors except $E_3$ that must replaced by
$$
E_3 = \begin{pmatrix}
0\\
1\\
0\\
-b_{22}(0)
\end{pmatrix}
$$
\end{remark}

It remains now to do the last step and to show the general case. Let us assume 
$$
S_1 = \begin{pmatrix}
S_{11} & S_{12}\\
S_{12} & S_{22}
\end{pmatrix}
$$
and rewrite \eqref{eqriccati_deriv2} or \eqref{eqriccati_deriv1} as a system on $\R\times L(2)$. Namely we have
\begin{align*}
\dot{S} &= Q(S) + \tau R(\tau,S),\\
\dot{\tau} &= \tau;
\end{align*}
where $Q(S)$ is the autonomous Riccati part. It is clear that $(S_{1}^{34},0)$ is an equilibrium point of this system. Moreover, by linearising the right hand side at $(S_{1}^{34},0)$ we obtain that it is a hyperbolic equilibrium point, since the linearized operator has eigenvalues
$$
\left\{ -\sqrt{1+\frac{4 c_{11}}{b_2}},-\frac{1}{2}-\frac{1}{2}\sqrt{1+\frac{4 c_{11}}{b_2}},-1,1 \right\},
$$
the same as for the autonomous system in all three cases. So by Grobman-Hartman theorem both systems are topologically conjugate in the neighbourhood of this equilibrium point. Since both of them have a single unstable direction it means that there exists a unique trajectory of the non-autonomous system that approaches $(S_{1}^{34},0)$ as $\tau \to 0+$. We claim that this trajectory must be a lift of the Jacobi curve to the extended phase-space. This result does not follow directly from the Grobman-Hartman theorem since $S^\varepsilon(\varepsilon)$ is far from the equilibrium and a priori we have no information about the behaviour orbits close to infinity. 

The result follows from an application of the variation formulae proved in~\cite{as} that can be stated as follows. Given a non-autonomous vector field $Z_s$ we denote by $F_t[Z_{\cdot}]$ a flow from time $0$ to time $t$ of the corresponding vector field. If $X,Y$ is a pair of autonomous vector fields, the variation formulae reads as
$$
F_t[X+Y] = F_t[(F_{.-t}[X])_* Y]\circ F_t[X].
$$  
where $F_{s-t}[X]_* Y$ is just the push-forward of $Y$ under $F_{s-t}[X]$.

In our case 
$$
X= \begin{pmatrix}
Q(S)\\
\tau
\end{pmatrix}, \qquad
Y = \begin{pmatrix}
\tau R(\tau,S)\\
0
\end{pmatrix}. 
$$
Due to smoothness of each flow the lift of the Jacobi curve will be then given by the limit curve
\begin{align*}
&\lim_{\varepsilon \to 0+} F_{\tau-\varepsilon}[X+Y]\begin{pmatrix}
S_1^\varepsilon(\varepsilon)\\
\varepsilon
\end{pmatrix}
= \lim_{\varepsilon \to 0+} \left( F_{\tau-\varepsilon}(F_{.-\tau+\varepsilon}[X])_* Y]\circ F_{\tau-\varepsilon}[X]\begin{pmatrix}
S_1^\varepsilon(\varepsilon)\\
\varepsilon
\end{pmatrix} \right) =\\
&=\lim_{\varepsilon \to 0+} \left( F_{\tau-\varepsilon}(F_{.-\tau+\varepsilon}[X])_* Y]\right)\circ \lim_{\varepsilon \to 0+} \left(F_{\tau-\varepsilon}[X]\begin{pmatrix}
S_1^\varepsilon(\varepsilon)\\
\varepsilon
\end{pmatrix} \right) =  \\
&=F_{\tau}(F_{.-\tau}[X])_* Y]\circ \lim_{\varepsilon \to 0+} \left(F_{\tau-\varepsilon}[X]\begin{pmatrix}
S_1^\varepsilon(\varepsilon)\\
\varepsilon
\end{pmatrix} \right)
\end{align*}
But the second limit corresponds to the lift of the Jacobi curve in autonomous case. Thus if we take a limit of this expression as $\tau \to 0+$ we would obtain
$$
\begin{pmatrix}
S_1^{34}\\
0
\end{pmatrix},
$$
like in the autonomous case. Which proves Theorem~\ref{thm:first_jet}.

\bibliographystyle{plain}
\bibliography{references}

\end{document}